\documentclass[11pt]{extarticle}
\usepackage{amsmath, amsthm, amssymb, hyperref, color}
\usepackage[shortlabels]{enumitem}
\usepackage{graphicx}
\usepackage[all]{xypic}
\usepackage{makecell}
\usepackage[final]{pdfpages}
\setboolean{@twoside}{false}
\usepackage{pdfpages}
\usepackage{caption}
\usepackage{subcaption}
\usepackage{scalefnt}
\usepackage{verbatim}
\usepackage{xcolor}
\usepackage{array}
\tolerance 10000
\headheight 0in
\headsep 0in
\evensidemargin 0in
\oddsidemargin \evensidemargin
\textwidth 6.5in
\topmargin .25in
\textheight 8.7in

\newtheorem{theorem}{Theorem}
\newtheorem*{theoremm}{Main Theorem}
\numberwithin{theorem}{section}
\newtheorem{proposition}[theorem]{Proposition}
\newtheorem{lemma}[theorem]{Lemma}
\newtheorem{corollary}[theorem]{Corollary}
\newtheorem{definition}[theorem]{Definition}

\newtheorem{remark}[theorem]{Remark}
\newtheorem{example}[theorem]{Example}

\newcommand{\PP}{\mathbb{P}}
\newcommand{\CC}{\mathbb{C}}
\newcommand{\ZZ}{\mathbb{Z}}
\newcommand{\A}{\alpha}
\newcommand{\B}{\beta}
\newcommand{\C}{\gamma}

 \date{}

%
%

\title{\textbf{Phylogenetic complexity of the Kimura $3$-parameter model}}

\author{Mateusz Micha{\l}ek and Emanuele Ventura}

\DeclareMathOperator{\rep}{Rep}
\begin{document}

\maketitle

\begin{abstract}
\noindent In algebraic statistics, the Kimura $3$-parameter model is one of the most interesting and classical phylogenetic models. We prove that the ideals associated to this model are generated in degree four, confirming a conjecture by Sturmfels and Sullivant. 

\end{abstract}

\thanks{2010 \emph{Mathematics Subject Classification}. Primary 52B20, Secondary 14M25, 13P25}

\section{Introduction}

\noindent The part of computational biology that models evolution and describes mutations in this process is called {\it phylogenetics} \cite{Ph}. This is a fertile subject witnessing many connections to several parts of mathematics such as algebraic geometry \cite{BW, 4aut}, combinatorics \cite{BHV01, Marysia, JaJCTA}, and representation theory \cite{CFM, Man2}. The methods used in this context of research are powerful and do not only apply to biology, but are employed in several other fields \cite{Para} such as modeling changes of words in languages \cite{DGZ}, literary studies \cite{phylotales} or linguistics itself \cite{GandT} with ideas going back to Darwin \cite{Darwin}.\\
\indent A crucial object in phylogenetics is a \emph{tree model}, which is a parametric family of probability distributions. It consists of a tree $\mathcal T$, a finite set of states $S$ and a family $\mathcal{M}$ of transition matrices, usually given by a linear subspaces of all $|S|\times |S|$ matrices. The case of particular interest is when $S=\{\textnormal{A,C,G,T}\}$, where the basis elements correspond to the four nucleobases of DNA: adenine (A), cytosine (C), guanine (G), and thymine (T). \\
\indent The models for which $\mathcal{M}$ is a proper subspace of matrices reflect some symmetries among elements of $S$. These symmetries are usually encoded by the action of a finite group $G$ on $S$. In these terms, $\mathcal{M}$ can be regarded as the space of $G$-invariant matrices or tensors. Such models constitute a class of interest and they are called {\it equivariant} \cite{DK}. If $G$ is the trivial group, we obtain the general Markov model, corresponding, on the algebraic geometry side, to secant varieties of Segre products. When the elements of $S$ can be identified with those of $G$, the model is called \emph{group-based}. Henceforth we assume $G$ to be abelian. \\
\indent The simplest among the equivariant, and group-based, models is the  {\it Cavender-Farris-Neyman} model. This is the instance for $S=G=\ZZ_2$, the group with two elements. A good understanding of this model from the algebraic geometry point of view has led to tremendous advances in this field. Sturmfels and Sullivant \cite[Theorem 28]{SS} showed that the algebraic varieties arising from it are defined by quadrics. Additionally, Buczy{\'n}ska and Wi{\'s}niewski described many of its remarkable algebro-geometric properties \cite{BW}. Consequently, Sturmfels and Xu \cite{SX}, and Manon \cite{Man3} described the connections of the model to toric degenerations of moduli spaces of rank two vector bundles on marked curves of fixed genus. For more relations to conformal field theory, we refer to \cite{KM14, Man2}. \\
\indent The Cavender-Farris-Neyman model is the simplest among the \emph{hyperbinary} models \cite[Section 3]{BDW}, that are given by $S=G=(\ZZ_2)^n$. The most biologically meaningful example of those is the {\it Kimura $3$-parameter} model; this corresponds to $n=2$. In this case, $S=\{\textnormal{A,C,G,T}\}$, and, moreover, the action of $G$ reflects the pairing between purines (A,G) and pyrimidines (C,T). This model was introduced by Kimura \cite{K81} much before the setting above was developed. Using numerical experiments, Sturmfels and Sullivant conjectured that the ideals of the algebraic varieties associated to this model are generated by polynomials of degree at most four \cite[Conjecture 30]{SS}. The confirmation of this conjecture is the main result of the present article. For any group $G$, Sturmfels and Sullivant defined the {\it phylogenetic complexity} $\phi(G)$ of $G$. 

\begin{definition}[{\bf Phylogenetic complexity} \cite{SS}]

Let $K_{1,n}$ be the star with $n$ leaves, and $X(G, K_{1,n})$ the variety associated to the group-based model.  
Let $\phi(G,K_{1,n})$ be the maximal degree of a generator in a minimal generating set of the ideal $I(X(G,K_{1,n}))$. The phylogenetic complexity $\phi(G)$ of $G$ is $\sup_{n\in \mathbb N} \lbrace \phi(G,K_{1,n})\rbrace$. 

\end{definition}

\noindent In \cite{MV17}, it was shown that for any abelian group $G$, its phylogenetic complexity $\phi(G)$ is finite. The main contribution of this article is a more detailed study of the phylogenetic complexity of $G=\mathbb Z_2 \times \mathbb Z_2$.

\begin{theoremm}\label{maintheorem}

The phylogenetic complexity of the Kimura $3$-parameter model $\phi(\ZZ_2\times \ZZ_2)$ equals four. 

\end{theoremm}

For more interesting results on the Kimura $3$-parameter model we refer to \cite{CFM, CS11, CS08, K12}.

\vspace{3mm}

\noindent {\bf Algebraic varieties associated to a model.}

\noindent We recall the explicit construction of the algebraic variety associated to a model. It is the Zariski closure of the locus of all probability distributions on the states of leaves allowed in the model. \\
\indent A {\it representation} of a model on a tree $\mathcal T$ is an association $\mathcal{E}\rightarrow \mathcal{M}$ of transition matrices to edges $\mathcal{E}$ of $\mathcal T$. The set of all representations is denoted by $\rep(\mathcal T)$. (Here we do not mention the root distribution, since it does not affect the family of probability distributions we obtain.)
To each vertex $v$ of $\mathcal T$ we associate an $|S|$ dimensional vector space $V_v$ with basis $(v_s)_{s\in S}$. We may regard an element of $\mathcal{M}$ associated to an edge $(v_1,v_2)=e\in \mathcal{E}$ as an element of the tensor product $V_{v_1}\otimes V_{v_2}$. We fix a representation $M\in \rep(\mathcal T)$ and an association ${\bf s}: \mathcal{L}\rightarrow S$. Here $\mathcal{L}$ is the set of leaves, i.e.~vertices of degree one, of $\mathcal T$. Following the usual Markov rule, we may compute the probability of $\bf s$: 
$$P(M,{\bf s})=\sum_{\substack{f:\mathcal{V}\rightarrow S\\ f_{|\mathcal L}={\bf s}}}\prod_{(v_1,v_2)\in \mathcal{E}} \Big( M((v_1,v_2))\Big)_{(f(v_1),f(v_2))},$$
where $\mathcal{V}$ is the set of vertices of $\mathcal T$. We may identify ${\bf s}$ with a basis element $\bigotimes_{l\in \mathcal{L}} l_{{\bf s}(l)}$ of $\bigotimes_{l\in\mathcal{L}} V_l$. This provides the map:
$$\Psi:\rep(\mathcal T)\ni M\rightarrow  \sum_{{{\bf s}:\mathcal{L}\rightarrow S}}P(M,{\bf s}) \bigotimes_{l\in \mathcal{L}} l_{{\bf s}(l)}\in\bigotimes_{l\in\mathcal{L}} V_l.$$
\noindent The image of this map is the family of probability distributions described by the model and its Zariski closure is the algebraic variety that represents the model. For group-based models, we denote this variety $X(G, \mathcal T)$, where $G$ is the group defining the model and $\mathcal T$ is the tree as above.
\vspace{3mm}

\noindent {\bf Earlier contributions.}

\noindent Our proof of the main theorem relies on previous results by many authors that we now recall.\\
\indent The first fundamental tool is the {\bf D}iscrete {\bf F}ourier {\bf T}ransform. This is a linear change of coordinates, based on the representation theory of $G$. For special cases in phylogenetics, it was first used by Hendy and Penny \cite{HendyPenny}, and by Erd\"os, Sz\'{e}kely, and Steel \cite{SSE}. In higher generality, it is treated in \cite{JaJalg,SS}. For group-based models, the DFT turns $\Psi$ into a monomial map, proving that the associated algebraic variety $X(G,\mathcal T)$ is a \emph{toric variety}. This translates the classical algebraic problem of finding defining equations of a variety into a combinatorial one. For more information about toric methods we refer to \cite{Cox, Fult, Stks}.\\
\indent Another key result is the reduction from arbitrary trees to the so-called stars or claw-trees $K_{1,n}$, i.e.,~trees with one inner vertex and $n$ leaves. The general procedure for group-based models to obtain ideals arising from arbitrary trees, knowing the ideals for $K_{1,n}$, was discovered in \cite{SS}. Again, this turned out to be very influential, leading, on one hand, to the general constructions of \emph{toric fiber products} \cite{Man2, Sethtfp}, and, on the other, to generalizations for equivariant models \cite{DK}. \\
\indent Combinatorial and computational methods in toric geometry are very well developed. As a starting point in our article we need to compute algebraic invariants of toric varieties embedded in very high dimensional ambient spaces. Here the computer algebra packages \texttt{Normaliz} \cite{Normaliz}, \texttt{4ti2} \cite{4ti2}, along with previous computational results from \cite{DBM} and \cite{SS} are used. In particular, Castenluovo-Mumford regularity plays a crucial role in the proof for $n=6$. These classical invariants are briefly discussed in the Appendix \ref{sec:App}, for the sake of completeness. \\
\indent This work may be also seen in the framework of the {\it stabilisation} of equations of a family of algebraic varieties. Indeed, our proof not only bounds the degrees of the generators, but in principle provides an inductive procedure to obtain all generators in case of $K_{1,n+1}$, assuming the generators for $K_{1,n}$ to be known. Finding equations of an infinite sequence of algebraic varieties, that come naturally in families, is an interesting current theme of research. This usually involves classical varieties such as secants of Segre varieties \cite{DK2} and Grassmannians \cite{DE15}. Indeed, the main result of Draisma and Eggermont in \cite{DE} shows that for equivariant models the associated algebraic variety can always be defined set-theoretically in some bounded degree, once $G$ and $S$ are both fixed. The fact that $\phi(G)$ is finite constitutes the main result of \cite{MV17}. Recently, another ideal-theoretic result was proved by Sam \cite{S16a} showing that the ideal of $k$th secant variety of $d$th Veronese embeddings is generated in bounded degree that is independent of $d$. Interestingly, the ideal-theoretic generation in bounded degree for secants of Segre varieties and Grassmannians are still central open problems. Finiteness issues are strongly connected with the theory of twisted commutative algebras and $\Delta$-modules by Sam and Snowden \cite{SS16}, and the theory of noetherianity by Draisma and Kuttler \cite{DK2}, Hillar and Sullivant \cite{HS12}, and others. \\
\indent Apart from beautiful results of existence, that are quite often non-constructive or very far from optimal, it is of interest finding an explicit description of phylogenetic algebraic varieties. One of the most well-known examples is the {\it salmon conjecture} \cite{A10}, since the prize offered by Allman for the hypothetical solver would be a smoked Copper river salmon. It asks for the description of $\sigma_4(\PP^3\times\PP^3\times\PP^3)$, the algebraic variety representing the general Markov model for $|S|=4$ and $\mathcal T=K_{1,3}$. The generators of the ideal are still unknown, however a set-theoretic description was found by Friedland and Gross \cite{FriGro12}. More recently, Daleo and Hauenstein \cite{DH16} gave a numerical proof of the salmon conjecture. \\
\indent As far as we know, our result is the only ideal-theoretic description, apart from the Jukes-Cantor model, where $|S|=4$ and $\mathcal T$ is an arbitrary tree.\\

\vspace{3mm}

\noindent {\bf Plan of the article.}

\noindent The whole article is devoted solely to the proof of the main theorem. In Section \ref{notation} we introduce the notation that is used throughout the proof. As the proof consists of several parts, some of them very technical, we present the overview of its structure in Section \ref{part0}. The main result is established in Sections \ref{part1} and \ref{part2}.

\section{Preliminaries and notation}\label{notation}

In this section we collect all the notation and terminology we will use in the rest of the paper. We divide this section into paragraphs to facilitate the reading.\\ 
\vspace{2mm}

\noindent {\bf Groups.}\\
\noindent Henceforth we set $G=\mathbb Z_2\times \mathbb Z_2$, unless otherwise stated. We denote the elements of $G$ by $0, \A, \B$, and $\C$. To denote unknown elements of $G$, we use letters $g,x,y,w,p,q \ldots$ We also refer to an unknown element, that is not relevant in a specific argument, with question mark ``?''. \\
\indent Apart from $G$, the most natural groups that enter the picture are the {\it symmetric group} on $n$ leaves $\mathfrak S_n$, the {\it group of flows} $\mathfrak G$, and the {\it automorphism group} $\textnormal{Aut}(G)$. The group of flows is the following. 

\begin{definition}[{\bf Group of flows}]

Let $G$ be a abelian group and $n\in \mathbb N$. The set of flows $\mathfrak G = \left\{ (g_1, \ldots, g_n) | \sum g_i = 0 \right\} $ of length $n$ of $G$ forms a group 
under the componentwise group operation. It is non-canonically isomorphic to the group $G^{n-1}$, the direct product of $n-1$ copies of $G$. 

\end{definition}

\indent The automorphism group of $G$, $\textnormal{Aut}(G)\cong \mathfrak S_3$, is the group of bijective group homomorphisms from $G$ to itself. The automorphism of $G$ specified by $\A \mapsto \A, \B \mapsto \C, \C\mapsto \B$ is simply denoted by $\B \leftrightarrow \C$; similarly for all the other automorphisms of $G$ having a non-trivial fixed element.

\vspace{3mm}

\noindent{\bf The toric variety $X(G , K_{1,n})$.}\\
\noindent For any abelian group $G$, the variety $X(G , K_{1,n})$ is a projective toric variety of dimension $n(|G|-1)$ living in $\mathbb P^{|G|^{n-1}-1}$, where the projective coordinates are in bijection with flows \cite{JaJalg}.\\
\indent Let us recall here its corresponding polytope. Let $M\cong \ZZ^{|G|}$ be the lattice whose basis corresponds to the elements of $G$. Consider $M^n$ with the basis $e_{(i,g)}$ indexed by pairs $(i,g)\in [n]\times G$. We define a map of sets from the group of flows to the lattice, $\psi: \mathfrak G\rightarrow M^n$, by $\psi((g_1,\dots,g_n))=\sum_{i=1}^n e_{(i,g_i)}$. The vertices of the polytope of $X(G , K_{1,n})$ are the images of the flows under the injective map $\psi$. 
\begin{remark}
The family of varieties $X(G , K_{1,n})$ has a wealth of symmetries; the group $\mathfrak S_n$, the {\it group of flows} $\mathfrak G$, and the {\it automorphism group} $\textnormal{Aut}(G)$ all act on the ideals of these varieties. 
\end{remark}

\vspace{3mm}

\noindent {\bf Binomials, tables, and moves. } \\
\noindent Ideals of toric varieties are binomial prime ideals. Thus they admit a minimal generating set of binomials. Binomials may be identified with a pair of tables of the same size, $T_0$ and $T_1$, of elements of $G$, regarded up to row permutation; this is another natural group in this setting which we implicitly take into account. Indeed, a binomial is a pair of monomials and the variables correspond to rows. Given the number of leaves $n$, coordinates are in bijection with flows of length $n$ of $G$. Hence rows are identified with flows of $n$ elements in $G$. 
Columns are in bijection with the $n$ leaves. From the definition of the toric ideals $I(X(G , K_{1,n}))$ \cite{SS}, it follows that a binomial belongs to $I(X(G , K_{1,n}))$ if and only if the two tables representing it are \emph{compatible}, i.e.,~for each $i$, the $i$th column of $T_0$ and the $i$th column of $T_1$ are equal as multisets. We index the columns of a given pair of tables $T_0,T_1$, with $n$ columns, by integers $1\leq i \leq n$. We refer to the element in the $i$th column of row $r$ as $r(i)$. \\
\indent Let $T$ be any table of elements of $G$. The procedure consisting of selecting a subset of rows in $T$ of cardinality at most $d$, 
and replacing it with a compatible set of rows is a {\it move of degree $d$}. A binomial, represented by a pair of tables $T_0, T_1$ of elements of $G$, 
is generated by binomials of degree at most $d$ if and only if there exists a finite sequence of moves of degree $d$ applied to $T_0$ or $T_1$ that transform 
$T_0$ into $T_1$. \\

\begin{example}\label{exmove}

Let $T$ be the table 

$$
T=\begin{bmatrix}
\A & \A & 0  &\dots  & 0\\
0 & \B & \ldots  & \ldots &\ldots \\
\C & 0 & \ldots  & \ldots &\ldots  \\
\ldots & \ldots & \ldots  & \ldots  &\ldots\\
\end{bmatrix}.
$$

\noindent The table $T$ can be transformed by a move of degree three into the table

$$
\tilde T=\begin{bmatrix}
0 & 0 & 0  &\dots  & 0\\
\C & \A & \ldots  & \ldots &\ldots \\
\A & \B & \ldots  & \ldots &\ldots  \\
\ldots & \ldots & \ldots  & \ldots  &\ldots\\
\end{bmatrix}.
$$

\noindent Indeed, the set of the first three rows of $T$ is compatible with the set of the first three rows of $\tilde T$. Note that if the rows in $T$ are flows, then the rows of $\tilde T$ are flows as well. The move described above is denoted by $$\A\A + 0\B + \C 0 = 00 + \C\A + \A\B.$$

\end{example}

\begin{remark}
In the notation for moves, we do not use the indices of the columns involved in the move. Instead, the indices are always clear from the move itself. For instance, the move in Example \ref{exmove} is in columns $1,2$. Also, note that, in general, the columns used for a move do not need to be consecutive. 
\end{remark}

\begin{remark}
The groups $\mathfrak S_n$, the {\it group of flows} $\mathfrak G$, and the {\it automorphism group} $\textnormal{Aut}(G)$  act on the equations of $X(G , K_{1,n})$,  and hence on the tables. The group $\mathfrak S_n$ acts permuting the columns of the pair of tables corresponding to a binomial in the ideal of the variety. The groups $\mathfrak G$ and $\textnormal{Aut}(G)$ act on the entries of the tables in the natural way, i.e., by evaluation. 
\end{remark}
\indent We now introduce one of the most important concepts for our approach. Given a pair of flows, we define a {\it distance} between them, which will enable us to use an inductive procedure on tables. The distance we consider is the classical Hamming distance between two words.

\begin{definition}[{\bf Hamming distance}]

Let $r_0$ and $r_1$ be two flows in $\mathfrak G$: 

$$
r_0 = (g_1+a_1, \ldots, g_n +a_n) \   \mbox{ and } \  r_1 = (g_1, \ldots , g_n). 
$$

\noindent Let $I=\{ \ell \in [n] | a_\ell \neq 0  \}$ and $J= \{ \ell \in [n] | a_\ell = 0  \}$. The multiset $\{a_{\ell}\}_{\ell\in I}$ constitutes the \textnormal{ disagreement string} $a_{\ell_1}\ldots a_{\ell_{|I|}}$ of the pair of flows $r_0$ and $r_1$. The cardinality $|I|$ is the \textnormal{Hamming distance} between $r_0$ and $r_1$. The multiset $\{a_{\ell}\}_{\ell\in J}$ constitutes their \textnormal{agreement string}. Up to the action of the group of flows $\mathfrak G$ on both flows, we may assume that the group elements $g_i = 0$ for all $i$. 

\end{definition}

\begin{remark}[{\bf Tables and Hamming distance}]

Given a pair of tables $T_0,T_1$, we ``compare'' them using the notion of Hamming distance as follows. Since the tables come with undistinguishable rows, 
we may choose as first rows of $T_0$ and $T_1$ two rows that minimize the Hamming distance among all the pairs of rows from $T_0$ and $T_1$. 
After fixing the first row in $T_0$ and in $T_1$, as described in Section \ref{part0}, 
one of the techniques adopted in Sections \ref{part1} and \ref{part2} is as follows. With moves of degree at most four, we create another pair of rows with strictly smaller Hamming distance than the initial one. 

\end{remark}

\vspace{3mm}

\noindent {\bf Counting functions.} \\
\noindent We will make use of counting functions on the tables $T_0$ and $T_1$. A counting function $f$ on the columns of $T_0$ has the same values as counting function on the columns of $T_1$, since the pairs of tables we are interested in are compatible, i.e., columnwise they are the same as multisets. Given $x\in G$, we denote by $x_{i_1 \ldots i_k}$ the number of copies $x\in G$ appearing in the columns $i_1,\ldots,i_k$ in $T_0$, or in $T_1$. 

\begin{example}
The function $\A_{12}-2\cdot 0_3$ counts the number of copies of $\A$ in columns $1$ and $2$ minus two times the number of copies of $0$ in column $3$.
\end{example}

From an algebraic point of view, a counting function defines a {\it grading} of the variables, that is a specialization of the multi-grading. Thus the fact that the counting function gives the same value on two tables is equivalent to the fact that the two corresponding monomials have the same degree with respect to the induced grading. Additionally, from the perspective of toric geometry, the counting function is induced by restricting the torus action to a special one-parameter subgroup. \\
\vspace{3mm}

\noindent {\bf Group homomorphisms}. \\
\noindent We will make use of group homomorphisms in order to do counting arguments in a given pair of tables. We denote 
$$
\phi_g: \mathbb Z_2 \times \mathbb Z_2 \rightarrow \mathbb Z_2,
$$
\noindent the group homomorphism given by the quotient map sending each element $x\in G$ to its class modulo the subgroup generated by the element $g\in G$.

\section{Complexity of the Kimura $3$-parameter model}

The aim of this section is to establish the phylogenetic complexity of the Kimura $3$-parameter model. In Section \ref{part0}, we discuss the structure of the proof, postponing the technical part of it to Sections \ref{part1} and \ref{part2}.

\subsection{Main result and structure of the proof}\label{part0}

\noindent We proceed presenting our main result along with the outline of the plan of the proof strategy. 

\begin{theorem}

The phylogenetic complexity of the Kimura $3$-parameter model $\phi(\ZZ_2\times \ZZ_2)$ equals four.

\end{theorem}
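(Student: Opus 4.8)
The plan is to establish the two inequalities $\phi(\ZZ_2\times\ZZ_2)\ge 4$ and $\phi(\ZZ_2\times\ZZ_2)\le 4$ separately. The lower bound is the easier one: for suitable small $n$, within the computational range of \cite{SS}, one checks --- by hand or with \texttt{4ti2} \cite{4ti2} and \texttt{Normaliz} \cite{Normaliz} --- that a minimal generating set of $I(X(\ZZ_2\times\ZZ_2,K_{1,n}))$ contains a binomial of degree exactly four; hence $\phi(\ZZ_2\times\ZZ_2,K_{1,n})=4$ for that $n$, and therefore $\phi(\ZZ_2\times\ZZ_2)\ge 4$.

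For the upper bound, recall from the discussion above that proving $I(X(G,K_{1,n}))$ is generated in degree at most $d$ is equivalent to the combinatorial statement that any two compatible tables $T_0,T_1$ of flows of length $n$ can be transformed one into the other by a finite sequence of moves of degree at most $d$. So it suffices to show that for $G=\ZZ_2\times\ZZ_2$ one may always take $d=4$, for every $n$; I would prove this by induction on $n$. The base of the induction, $n\le 6$, is settled computationally: for $n\le 5$ the generators of the toric ideal can be listed explicitly, while $n=6$ is the delicate case, handled by bounding the Castelnuovo--Mumford regularity of $X(\ZZ_2\times\ZZ_2,K_{1,6})$ with \texttt{Normaliz} and the data of \cite{DBM, SS} (see Appendix \ref{sec:App}) and combining this with the degrees of the low-degree equations to rule out generators of degree $>4$.

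For the inductive step (say $n\ge 7$) I would use the Hamming distance as the descent parameter. Given a compatible pair $T_0,T_1$, act by the group of flows $\mathfrak G$ so that a pair of rows $r_0\in T_0$, $r_1\in T_1$ attaining the \emph{minimum} Hamming distance $\delta$ over all such pairs reads $r_1=(0,\dots,0)$ and $r_0=(a_1,\dots,a_n)$, with disagreement string supported on a set $I$, $|I|=\delta$ (here $\delta\ne 1$, since the disagreement string of a pair of flows sums to $0$). If $\delta=0$ one deletes the common row from both tables and recurses on fewer rows. Otherwise the goal is to exhibit a finite sequence of moves of degree $\le 4$ --- involving $r_0$ and at most three further rows of $T_0$ --- that strictly lowers the minimal Hamming distance of the pair (or, where convenient, reduces the number of active columns, passing to $K_{1,n-1}$). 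The combinatorial engine, refining the technique of \cite{MV17}, is to transfer entries of the disagreement string from $r_0$ onto auxiliary rows while keeping every column multiset fixed; the automorphisms $\B\leftrightarrow\C$ of $G$ and the quotient homomorphisms $\phi_g:\ZZ_2\times\ZZ_2\to\ZZ_2$ supply the parity and counting arguments that guarantee the required auxiliary rows and moves exist.

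I expect the main obstacle to be exactly this last step. When $\delta$ is large but the other rows of $T_0$ do not contain the ``helpful'' group elements in the columns where $r_0$ and $r_1$ disagree, realizing a distance-reducing move inside degree four demands a careful and rather long case analysis on the multiset of values appearing in the disagreement string (finitely many cases, since $|G|=4$) and on how these values are distributed among the remaining rows. This is what occupies Sections \ref{part1} and \ref{part2}: the ``long-distance'' reductions that bring $\delta$ down to a bounded value are carried out first, and the residual small-$\delta$ configurations --- where the degree-four bound, rather than a weaker one, is genuinely forced, and where the $n\le 6$ base cases re-enter --- are disposed of afterwards.
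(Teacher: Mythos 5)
Your outline reproduces the architecture of the paper's proof almost exactly: the lower bound from the explicit computations for small $n$; the upper bound by induction on the number of leaves with a computational base through $n=6$ (Castelnuovo--Mumford regularity via \texttt{Normaliz} entering precisely at $n=6$); and, for $n\ge 7$, an induction on the number of rows combined with a descent on the Hamming distance between a closest pair of rows, using moves of degree at most four, counting functions, the flow group, $\mathrm{Aut}(G)$ and the homomorphisms $\phi_g$, with the alternative of collapsing two columns to pass to $K_{1,n-1}$. So the strategy is the paper's strategy.

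The gap is that the strategy is all you give: the theorem lives entirely in the step you defer as ``the main obstacle'', and nothing in the sketch argues that degree-four moves always suffice there. Concretely, (1) the descent is not a uniform induction on $\delta$: distances $\ge 4$ and the string $\A\B\C$ are handled by bespoke counting-function arguments (Proposition \ref{aaaaa,aaaa,aaabc,aabb,aaab}, Proposition \ref{abc}), and the residual case $\delta=2$ is not finished by further lowering $\delta$ but by classifying ten configurations, reducing them to the three main cases (\ref{maincase}), eliminating all bad pairs in two agreeing columns, and only then invoking Theorem \ref{n->n-1 by badpairs} to drop to $n-1$ leaves --- this is exactly where the bound four, rather than some larger constant, is earned, and your proposal contains no argument for it; (2) for $n=6$ the regularity bound alone yields only that Markov basis elements have degree at most $16$ (Proposition \ref{prop:n6}), so ``combining this with the degrees of the low-degree equations'' is not a mechanism: the paper must run the difference-row counting of the main case to force a putative generator either to have more than $16$ rows, or to live on the faces $\tilde P,\tilde P'$ (regularity $14$), or on the codimension-three faces $P_1,P_2,P_3$ whose Markov bases are computed directly (Lemma \ref{prop:n6faces}). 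Without these arguments the proposal is a plan, not a proof.
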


\begin{figure}
\caption{Matryoshka of the proof.}
\label{fig1}
\begin{center}
\includegraphics[scale=0.6]{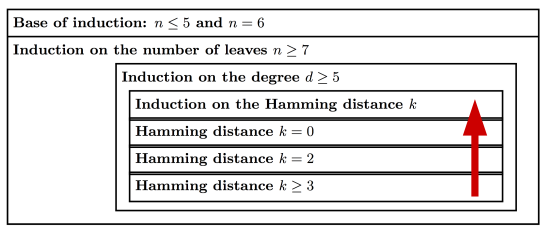}
\end{center}
\end{figure}

\indent The structure of the proof is presented in Figure \ref{fig1}. Our proof is an induction on the number of leaves $n$, i.e.,~the number of columns of the tables. The base of our induction is $n=3$. 
The case of $n\leq 5$ leaves has been studied computationally. More precisely, for $n=3$ the result is presented in \cite{SS} and for $n=4$ it is computed in \cite{DBM}. For $n=5$ we used the program featured in \cite{DBM} to produce the vertices of the polytope.  The computer algebra program \texttt{4ti2} \cite{4ti2} specialized for toric ideals was able to compute the Markov basis using a server equipped with a CPU \texttt{4 Intel-Xeon E7-8837/32 cores/2.67GHz} and a memory of \texttt{1024Gb} RAM. 

\begin{proposition}

The ideal $I(X(G, K_{1,5}))$ is minimally generated by $22240$ polynomials of degree at most four: $12960$ quadrics, $2560$ cubics, and $6720$ quartics. 

\end{proposition}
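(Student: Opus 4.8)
The plan is to reduce the statement to a single explicit toric computation. Recall that $X(G,K_{1,5})$ is the projective toric variety of the point configuration $A\subseteq\ZZ^{20}$ consisting of the $4^{4}=256$ lattice points $\psi(f)=\sum_{i=1}^{5}e_{(i,f(i))}$, one for each flow $f=(f(1),\dots,f(5))$ of length $5$ in $G=\ZZ_2\times\ZZ_2$; thus $I(X(G,K_{1,5}))$ is the toric ideal $I_A\subseteq k[x_f : f\in\mathfrak G]$ in $256$ variables. Being the ideal of a (multi)graded toric variety, $I_A$ has a minimal generating set consisting of binomials, and, by graded Nakayama with respect to the $\ZZ A$-grading, the number of minimal generators in each total degree is an invariant of the ideal. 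So the first step is simply to build $A$: enumerate the $256$ tuples of $G^{5}$ with zero coordinate sum (this is what the program of \cite{DBM} does, producing the vertices of the polytope) and record the corresponding lattice points.

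The second step is to compute a minimal generating set of $I_A$ directly. Since minimal generators of $I_A$ correspond to a minimal Markov basis of $\ker_{\ZZ}(A)$, I would run the Markov-basis routine of \texttt{4ti2} on the configuration produced above and then minimize — this is exactly the computation the paper attributes to the $32$-core, $1024$\,GB server. One then sorts the resulting binomials by degree. The expected output is $22240$ binomials, partitioned as $12960$ of degree $2$, $2560$ of degree $3$, and $6720$ of degree $4$; in particular there are no generators in degree $\ge 5$, which is the content of the proposition.

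For verification and to guard against a computational slip I would: (i) confirm minimality by checking that for each $d\in\{2,3,4\}$ no output binomial of degree $d$ lies in the ideal generated by the remaining binomials of degree $\le d$; (ii) cross-check the low-degree Hilbert function of $k[A]$ with \texttt{Normaliz}, which independently pins down the number of quadrics as $\dim(I_A)_2 = \binom{257}{2}-\#\{\psi(f)+\psi(f')\} = 12960$; and (iii) as a consistency test, organize the $22240$ generators into orbits under the combined action of $\mathfrak S_5$, $\operatorname{Aut}(G)\cong\mathfrak S_3$ and the group of flows $\mathfrak G$ on $I(X(G,K_{1,5}))$, and verify that the orbit cardinalities sum, degree by degree, to the stated counts.

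The main obstacle is purely one of scale rather than of idea: with $256$ variables the codimension of $X(G,K_{1,5})$ in $\PP^{255}$ is $240$ and the number of generators is in the tens of thousands, so the Markov-basis/Gröbner-completion step is both memory- and time-critical — hence the large-RAM machine — while the subsequent minimization (extracting a genuinely \emph{minimal} generating set, and thereby certifying the absence of any generator in degree $\ge 5$) is the delicate point that turns this from an experiment into a proof. I expect no conceptual difficulty beyond running, and independently sanity-checking, that computation.
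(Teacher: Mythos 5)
Your proposal matches the paper's approach: the paper proves this proposition purely computationally, by generating the $256$ flows with the code of \cite{DBM} and computing a (minimal) Markov basis with \texttt{4ti2}, then reading off the degree counts. Your added sanity checks (minimality within each degree, cross-checking against the Hilbert function, orbit-counting under $\mathfrak S_5\times\operatorname{Aut}(G)\times\mathfrak G$) are reasonable supplements but do not change the underlying method.
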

\indent The case $n=6$ is treated in Section \ref{part3}. Methods similar to the general case $n\geq 7$ and bounds on Castelnuovo-Mumford regularity obtained using \texttt{Normaliz} \cite{Normaliz} allow us to reduce the problem to a computation handled with \texttt{4ti2}. From the computational point of view, it is interesting to note that we were not able to address the case $n=6$ only with computational tools. Based on our experiments with \texttt{4ti2}, we expect the computation to be not feasible: it would run for several years on a server of the same capability as the one mentioned above, and a memory of \texttt{1Tb} RAM would not be sufficient to finish the computation. \\
\indent For $n\geq 7$, we have an induction on the degree $d$ of the generators, i.e.,~the number of rows of the table. Inside a specific degree $d$, we have an induction on the Hamming distance $k$ of two rows of the tables. The strategy in this inner induction on the Hamming distance $k$ is the following. Suppose we have a binomial generator of degree $d\geq 5$. Hence, we have a pair of tables consisting of $d$ rows each and with $n\geq 7$ columns. Two rows have Hamming distance $k$ and we reduce it to $k=0$; in other words, the given pair of tables is transformed into a pair of tables that have an identical row. This is a binomial which is a product of a binomial of degree $d-1$ and a variable. By induction on $d$, such a binomial can be generated in degree at most $4$.\\
\indent Hence the aim of the induction on the Hamming distance $k$ is to reduce it to $k=0$. In order to achieve this, we address the case $k\geq 3$ into two separate propositions in Section \ref{part1}; see Proposition \ref{aaaaa,aaaa,aaabc,aabb,aaab} and Corollary \ref{Reduction of four}, and Proposition \ref{abc}. This reduces the proof to $k=2$. Recall that there do not exist flows whose Hamming distance is $k=1$, since they cannot disagree only in one entry. \\
\indent We now discuss the strategy in case $k=2$, the technical heart of the proof, which is tackled in Section \ref{part2}. In spite of many symmetries, discussed in Section \ref{notation}, there are several cases one has to consider: We identify ten cases, indexed by roman numerals, where the first two rows of the given pair of tables $T_0, T_1$ have a disagreement string of length $k=2$. Here we provide a uniform proof for three crucial cases: {\bf Case I}, {\bf II}, and {\bf III}. As we show them simultaneously with the very same techniques, we refer to those as the ``main case''. The rest of the cases is treated by reducing them to the main case. \\
\indent For the proof in the main case, we look at the second rows of each of the tables $T_0$ and $T_1$. Let $\ell$ denote the length of
the disagreement string between those two, in columns {\it not} involving the first two. By Corollary  \ref{Reduction of four}, we are able to assume $\ell \leq 3$ and, since $n\geq 7$, the length of the agreement string between the second row of $T_0$ and the second row of $T_1$, outside columns $1$ and $2$, is at least $n-5\geq 2$. Since the columns are indistinguishable up to the action of $\mathfrak S_n$, we may assume that the columns $n-1$ and $n$ are involved in the agreement string.  Now the aim is to reduce to the situation in which no row has two nonzero entries in the columns $n-1$ and $n$: employing moves of degree at most four, we would like to eliminate
all the strings which have nonzero entries on both columns $n-1$ and $n$. We call such strings {\it bad pairs}. 

\begin{definition}[{\bf Bad pairs}]

A \textnormal{bad pair} is a string $xy$, where the elements $x,y\in G$ are such that: 

\begin{enumerate}
\item[(i)] they are both nonzero; 
\item[(ii)] $x$ is in column $n-1$ and $y$ is in column $n$. 
\end{enumerate}

\end{definition}

We now show that eliminating all the bad pairs we fall back to the case of $n-1$ leaves, which allows us to conclude, by the outermost induction.

\begin{theorem}\label{n->n-1 by badpairs}

Suppose that a pair of compatible tables $T_0, T_1$ with $n\geq 7$ columns do not contain rows with bad pairs. Then the corresponding binomial is generated in degree at most $\phi(G, K_{1,n-1})$. 

\begin{proof}

The assumption implies that for every row $r$ of $T_0$ and $T_1$ we have either $r(n-1)=0$ or $r(n)=0$. Summing up the columns $n-1$ and $n$, we obtain two tables $\tilde T_0$ and $\tilde T_1$.
The crucial observation is that $\tilde T_0$ and $\tilde T_1$ are compatible tables with $n-1$ columns. Hence they correspond to a binomial 
in $I(X(G , K_{1,n-1}))$. This binomial is generated in degree at most $\phi(G, K_{1,n-1})$ by definition. This implies that $\tilde T_0$ 
and $\tilde T_1$ can be transformed into each other by a finite sequence of moves of degree at most $\phi(G, K_{1,n-1})$. Each of these moves 
lifts to the tables $T_0$ and $T_1$, transforming all their columns accordingly, except columns $n-1$ and $n$. Here the moves permute 
the pairs of elements, where each pair is formed by the two elements in columns $n-1$ and $n$, in a fixed row. These moves transform $T_0, T_1$ into 
$\hat{T}_0, \hat{T}_1$. The latter need not be the same though; indeed, they may differ in columns $n-1$ and $n$. As in the proof of \cite[Theorem 3.12]{MV17}, we make quadratic moves to adjust the elements in columns $n-1$ and $n$. These transform $\hat{T}_0$ into $\hat{T}_1$. Hence the tables $T_0, T_1$ are generated in degree at most $\phi(G, K_{1,n-1})$. 
\end{proof}

\end{theorem}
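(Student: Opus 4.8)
The plan is to reduce the binomial defined by $T_0,T_1$ to a binomial with $n-1$ columns, for which the outermost induction already gives generation in degree at most $\phi(G,K_{1,n-1})$. First I would use the hypothesis: since no row contains a bad pair, every row $r$ of either table satisfies $r(n-1)=0$ or $r(n)=0$. This means the pair of entries of $r$ in columns $n-1$ and $n$ is of the form $x0$ or $0y$, so it carries the same information as the single group element $x+y$ (which equals $r(n-1)+r(n)$). Accordingly I would form $\tilde T_0$ and $\tilde T_1$ by deleting columns $n-1$ and $n$ and appending one new column whose entry in each row $r$ is $r(n-1)+r(n)$. Because $T_0$ and $T_1$ are compatible and every row has a zero in column $n-1$ or $n$, the multiset of values in the new column of $\tilde T_0$ agrees with that of $\tilde T_1$ (the contributions from the ``$x0$'' rows and the ``$0y$'' rows are matched column-by-column), so $\tilde T_0$ and $\tilde T_1$ are compatible $(n-1)$-column tables, hence define a binomial in $I(X(G,K_{1,n-1}))$. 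Note also that rows of $\tilde T_i$ are still flows, since summing two coordinates of a flow preserves the zero-sum condition.

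Next I would invoke the definition of phylogenetic complexity: this binomial is generated by moves of degree at most $\phi(G,K_{1,n-1})$, so there is a finite sequence of such moves carrying $\tilde T_0$ to $\tilde T_1$. Each such move selects at most $\phi(G,K_{1,n-1})$ rows of $\tilde T_i$ and replaces them with a compatible set of rows. I would lift each move to the tables with $n$ columns: in all columns other than $n-1$ and $n$ the replacement is performed verbatim, while in columns $n-1$ and $n$ I keep, for each row of the replacement set, an (arbitrary) decomposition of its new $(n-1)$-st-column value $z$ as an ordered pair in $\{z0, 0z\}$. This is legitimate because compatibility of the replacement rows in the merged column forces, column-by-column, that the multiset of these $z$-values is preserved, hence one can match them up with the original rows' pairs. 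Carrying out the whole sequence of lifted moves transforms $T_0,T_1$ into tables $\hat T_0,\hat T_1$ that agree in every column except possibly $n-1$ and $n$.

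Finally I would clean up columns $n-1$ and $n$ with moves of degree two. Since $\hat T_0$ and $\hat T_1$ agree outside these two columns and are compatible, the discrepancy is entirely internal to a collection of rows that, outside columns $n-1,n$, are pairwise identical; within such a block the entries in columns $n-1$ and $n$ form two compatible $2$-column sub-tables (each row of the form $x0$ or $0y$), and any two compatible multisets of such pairs can be matched up by transpositions of the type $x0+0y = 0y+x0$ together with, at worst, elementary $2\times 2$ swaps — all moves of degree two. This is exactly the argument used in the proof of \cite[Theorem 3.12]{MV17}, so I would cite it rather than redo it. Composing, $T_0$ is transformed into $T_1$ by moves of degree at most $\max\{\phi(G,K_{1,n-1}),2\}=\phi(G,K_{1,n-1})$, which is the claim.

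The one genuinely delicate point — and the main obstacle — is the lifting step: one must check that a move of degree $d$ on $\tilde T_i$, which only sees the summed column, always lifts to a move of degree $d$ on $T_i$ that respects the constraint ``no bad pairs'' is \emph{not} required after the move (indeed $\hat T_0,\hat T_1$ may temporarily differ in columns $n-1,n$), but does produce compatible tables at every stage. This hinges on the observation that compatibility in the merged $(n-1)$-st column, being an equality of multisets of elements $z\in G$, can always be refined to an equality of multisets of pairs in $\{z0,0z\}$ by choosing the same refinement on both sides; once that is granted the lift is purely bookkeeping. Everything else is routine.
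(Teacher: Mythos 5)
Your proposal reproduces the paper's proof essentially step for step: merge columns $n-1$ and $n$ (legitimate because the no-bad-pair hypothesis makes the pair $r(n-1),r(n)$ recoverable up to swap from the sum), use the definition of $\phi(G,K_{1,n-1})$ to get a sequence of moves between $\tilde T_0$ and $\tilde T_1$, lift each move by redistributing the original $(x,0)$/$(0,y)$ pairs among the new rows so that compatibility in columns $n-1,n$ is preserved, and finish with quadratic moves, citing \cite[Theorem 3.12]{MV17}. One small overstatement in your cleanup paragraph: after the lift, the $2$-column sub-tables in columns $n-1,n$ need \emph{not} be compatible block by block (a block being rows that agree outside columns $n-1,n$) — compatibility is only a global condition and the corrective quadratic moves may exchange entries between different blocks — but since you, like the paper, ultimately delegate this to the cited MV17 argument, this does not open a gap.
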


\begin{figure}
\caption{Zoom in of Hamming distance $k=2$ step.}
\label{fig2}
\begin{center}
\includegraphics[scale=0.7]{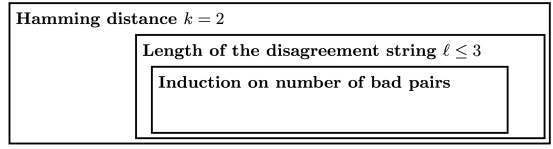}
\end{center}
\end{figure}

\subsection{Reduction of Hamming distance $\geq$ 3}\label{part1}

In this section, we start our reduction of the Hamming distance. More precisely, we assume the Hamming distance to be at least  three and we prove that we can reduce it to two; the latter will be discussed in Section \ref{part2}. We proceed analyzing the cases when the disagreement string is given by at least four entries. 

\begin{proposition}\label{aaaaa,aaaa,aaabc,aabb,aaab}

The disagreement strings  (i) $\A\A\A\A$, (ii) $\A\A\B\B$, (iii) $\A\A\B\C$, and (iv) $\A\A\A\B$ can be reduced. 

\begin{proof}

\noindent (i). Consider the function $0_{1234} - \A_{1234}$. By the action of the group of flows $\mathfrak G$, we may assume that this counting function is nonpositive on both of the tables. Since the function is stricly positive in the first row of $T_1$, there exists a row $r$ in $T_1$ where there are strictly more copies of $\A$ than copies of $0$ in the columns $1,2,3,4$. On the other hand, $r$ cannot contain $\A\A$ in two of the columns $1,2,3,4$, since we would exchange those with the corresponding entries in the first row and this would decrease the Hamming distance. Thus $r$ has one copy of $\A$ and no copies of $0$ in columns $1,2,3,4$. If the row $r$ has both copies of $\B$ and $\C$, we would move the string $\A\B\C$ to the first row of $T_1$, reducing the Hamming distance. Whence we may assume that $r$ contains the string $\A\B\B\B$ in columns $1,2,3,4$. Notice that in columns $2, 3, 4$ of $T_1$, there are no strings of the form $\A\A$ or $\C\C$, otherwise quadratic moves would decrease the Hamming distance. Additionally, in columns $2,3,4$ there is no string of the form  $\A\C$; for this we can apply in $T_1$ the cubic move $0000+\A\B\B\B+?\A\C = \A\B\C 0 +0\A 0\B + ?0\B$. Now, we introduce the counting function $0_{234}+\B_{234}-\A_{234}-\C_{234}$ on $T_1$. By the previous discussion about the possible strings in columns $2,3,4$, this function is at least one in every row of $T_1$. Consequently, there exists a row $r'$  in $T_0$ where this function is three. As a consequence, the row $r'$ contains either the string $\B\B$ or $00$. This would decrease the Hamming distance.\\

\noindent (ii). Consider the counting function $0_{1234}-\A_{12}-\B_{34}$. By the action of the group of flows $\mathfrak G$, we may assume it is nonpositive on both of the tables. Since this function is strictly positive on the first row of $T_1$, there exists a row $r$ in $T_1$ where the function is strictly negative. Note that  on the row $r$, one has $\A_{12}, \B_{34}\leq 1 $; otherwise we would make a quadratic move, involving $r$ and the first row of $T_1$, reducing the Hamming distance. \\
\indent If in the row $r$ we have $\A_{12}=\B_{34}=1$, then $0_{1234} \leq 1$, by the value of the counting function on $r$. Hence in the row $r$, there exists $\C$, which allows us to make a quadratic move reducing the Hamming distance. Without loss of generality, we have $\A_{12}=1, \B_{34}=0$, and $0_{1234}=0$. Thus the row $r$ contains either the string $\C\A\C\C$ or the string $\A\C\C\C$. In both cases, we exchange $\C\C$ with the first row of $T_1$ and we act with the flow $(0, 0 , \C, \C)$ on $T_0$ producing $\A\A\A\A$, which is (i). \\

\noindent (iii). Consider the function $0_{1234}-\A_{12}-\B_3-\C_4$. By the action of the group of flows $\mathfrak G$, we may assume it is nonpositive on both of the tables. Therefore there exists a row $r$ in $T_1$ where the function is strictly positive. Note that on the row $r$ one has $\A_{12}\leq 1$. \\
\indent If in the row $r$ we have $\A_{12}=1$ and $\B_3=1$, then we may assume $r$ contains the string $\A x\B y$ in columns $1,2,3,4$. We have $x,y\neq\A,\B,\C$, as otherwise in each of these circumstances we would make a quadratic move between $r$ and the first row of $T_1$, reducing the Hamming distance. Then the function is zero on $r$, which is not possible by assumption. Analogously, we may conclude when $\A_{12}=1$ and $\C_4=1$. \\
\indent If in the row $r$ we have $\A_{12}=1, \B_3=0$, and $\C_4=0$, then $0_{1234}=0$. In this case we have $\A_{34}=0$, because of a quadratic move between $r$ and the first row of $T_1$. Hence the row $r$ contains the string $\A \C\B$ in columns $1,3,4$, which again would reduce the Hamming distance.\\
\indent If in the row $r$ we have $\A_{12}=0$, then either $\B_3=1$ or $\C_4=1$. If $\B_3=\C_4=1$, then in columns $1,2$ the row $r$ contains the string $00$; indeed we cannot have copies of $\A, \B$ or $\C$ by quadratic moves with the first row of $T_1$. This implies that the counting function $\A_{12}+\B_3+\C_4-0_{1234}$ is zero on the row $r$, which is not possible by the assumption. If in the row $r$ we have $\B_3=0$ and $\C_4=1$, then $0_{1234}=0$. In the row $r$ we can now exclude all the possible elements in each column by quadratic moves, obtaining the string $\B\A\C$ in columns $2,3,4$. We exchange this string with the first row of $T_1$, reducing the Hamming distance. Analogously, if in the row $r$ we have $\B_3=1$ and $\C_4=0$, we obtain $\C\B\A$ in columns $2,3,4$, and we conclude in the same way. \\

\noindent (iv). Consider the counting function $0_{1234}-\A_{123}-\B_4$. By the action of the group of flows $\mathfrak G$, we may assume it is nonpositive on the tables. Therefore there exists a row $r$ in $T_1$ where the function is strictly negative. Thus on the row $r$ we have $0_{1234} \leq 1$, as $\A_{123}\leq 1$. \\
\indent Suppose that in the row $r$ we have $0_{1234}=1$. Then $\A_{123}=1$ and $\B_4=1$, by the assumption on the value of the counting function on $r$. In two of the columns $1,2,3$ we cannot have $\A$ or $\B$ by quadratic moves, involving $r$ and the first row of $T_1$. Thus we have a copy of $\C$; we now make a quadratic move between $r$ and the first row of $T_1$, which decrease the Hamming distance.\\
\indent Suppose that in the row $r$ we have $0_{1234}=0$. If in the row $r$ we have $\A_{123}=0$, then $\B_4=1$. In columns $1,2,3$ we cannot have $\B$, as otherwise we would exchange the string $\B\B$ with the first row of $T_1$, thus reducing the Hamming distance. Whence $r$ contains the string $\C\C\C\B$ in columns $1,2,3,4$. If in the row $r$ we have $\A_{123}=1$, then $\B_4=0$. In this situation, by the same argument, $r$ contains the string $\A\C\C\C$ (or $\C\A\C\C$ or $\C\C\A\C$). We claim that having the string $\A\C\C\C$ can be reduced to the case of having the string $\C\C\C\B$ up to quadratic moves and group automorphism. 
Indeed, suppose we have the string $\A\C\C\C$ in the row $r$. We exchange $\C\C$ from $r$ with $00$ from the first row of $T_1$ in columns $2,3$. We act with the flow $(0,\C,\C,0)$ on both tables and we transpose column $1$ and column $4$. Now the row $r$ contains the string $\C\C\C\B$ in columns $1,2,3,4$. \\
\indent By the previous discussion, it is enough to deal only with the string $\C\C\C\B$ in $r$. Consider the counting function $\A_{123}+\B_{123}-\C_{123}-0_{123}$. Note that this function has only odd values. We now show that  the function cannot be positive on a row of $T_1$. Indeed, assume there is a row $r'$ where the function takes a positive value. Then the row $r'$ contains either $\A\A$, $\B\B$ or $\A\B$ in columns $1, 2, 3$. The first two cases are not possible, because we would exchange them with the string $\C\C$ in the row $r$; this would produce $\A\A$ or $\B\B$ in the row $r$, which we would exchange with $00$ in the first row of $T_1$. 
We are left with the possibility of $r'$ having $\A\B$ in columns $1,2,3$. For this we apply in $T_1$ the cubic move $0000+\C\C\C\B + ?\A\B = \C\C\B\B+0\A 00 + ?0\C$. \\
\indent In conclusion, the counting function $\A_{123}+\B_{123}-\C_{123}-0_{123}$ is strictly negative on every row of $T_1$. Since the value of this function on the first row of $T_0$ is $3$, there exists a row $r''$ in $T_0$ on which the function is $-3$. Thus in $r''$ we have either $00$ or $\C\C$ in columns $1,2,3$. In this case, we would exchange them with the first row of $T_0$ reducing the Hamming distance. 
\end{proof}

\end{proposition}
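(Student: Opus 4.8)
The plan is to handle the four disagreement strings by the same general mechanism: after normalizing via the action of the group of flows $\mathfrak{G}$, produce a counting function that is forced to be nonpositive on both tables but is strictly positive on the first row of $T_1$, hence is strictly negative on some witness row $r$ of $T_1$; then argue that any row achieving this negative value must contain a short substring (length $\leq 3$) that can be swapped against the first row, or that can be processed by a single cubic move, so as to strictly decrease the Hamming distance. The reductions among the four cases should be arranged so that (ii) feeds into (i), and the $\A\A\A\B$ case is self-contained but internally reduces the problematic string $\A\C\C\C$ to $\C\C\C\B$ by a quadratic move plus the automorphism $\B\leftrightarrow\C$ and a column transposition.

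Concretely, for (i) I would take the function $0_{1234}-\A_{1234}$; since no row of $T_1$ may contain $\A\A$ in two of the first four columns (else a quadratic move with the first row drops the distance) and no row may contain both $\B$ and $\C$ among those columns (else the substring $\A\B\C$ can be moved to the first row), the witness row is forced to read $\A\B\B\B$; one then rules out $\A\A$, $\C\C$, and $\A\C$ in columns $2,3,4$ (the last by the cubic move $0000+\A\B\B\B+?\A\C=\A\B\C0+0\A0\B+?0\B$), introduces the secondary function $0_{234}+\B_{234}-\A_{234}-\C_{234}$, which is then $\geq 1$ on every row of $T_1$, forces a row of $T_0$ where it equals $3$, i.e.\ containing $\B\B$ or $00$ in those columns, and swaps. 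For (ii) the function $0_{1234}-\A_{12}-\B_{34}$ similarly forces the witness row to contain $\C\C$ in columns $3,4$ (up to symmetry $\C\A\C\C$ or $\A\C\C\C$), which after a quadratic exchange and acting with the flow $(0,0,\C,\C)$ becomes the string $\A\A\A\A$ of case (i). Case (iii) uses $0_{1234}-\A_{12}-\B_3-\C_4$ with an exhaustive but short case split on the values of $\A_{12}$, $\B_3$, $\C_4$, each branch ending either in a direct quadratic swap or in a forced string like $\A\C\B$, $\B\A\C$, $\C\B\A$ to be moved to the first row. Case (iv) uses $0_{1234}-\A_{123}-\B_4$, forcing the string $\C\C\C\B$ (after reducing $\A\C\C\C$ as above), then the parity function $\A_{123}+\B_{123}-\C_{123}-0_{123}$, which takes only odd values; one shows it cannot be positive on any row of $T_1$ (a row with $\A\A$ or $\B\B$ in columns $1,2,3$ swaps against the $\C\C$ of $r$ and then against the first row; a row with $\A\B$ is handled by the cubic move $0000+\C\C\C\B+?\A\B=\C\C\B\B+0\A00+?0\C$), so it is strictly negative on all of $T_1$, forcing a row of $T_0$ with value $-3$, i.e.\ $00$ or $\C\C$ in columns $1,2,3$, which then swaps against the first row of $T_0$.

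The main obstacle I anticipate is bookkeeping the interplay between quadratic moves (which are "free" but can only permute length-one entries) and the cubic moves, while making sure every substring swap against the first row genuinely lowers the Hamming distance rather than merely redistributing disagreements among columns. In particular one must be careful that after acting with $\mathfrak{G}$ to make the counting function nonpositive, one does not lose the normalization $g_i=0$ used to talk about Hamming distance; this is why each counting function is chosen to be invariant under the relevant residual symmetry, and why the reductions between cases (the $(0,0,\C,\C)$ flow in (ii), the $(0,\C,\C,0)$ flow plus column transposition in (iv)) are set up to land exactly on an already-treated string. The secondary counting functions in (i) and (iv) are the delicate point: one needs that the first function's analysis pins down the witness row tightly enough that the second function is genuinely bounded below on all of $T_1$, so that the pigeonhole on $T_0$ can be applied.
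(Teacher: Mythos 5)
Your proposal is correct and follows essentially the same route as the paper's proof: the same four counting functions ($0_{1234}-\A_{1234}$, $0_{1234}-\A_{12}-\B_{34}$, $0_{1234}-\A_{12}-\B_3-\C_4$, $0_{1234}-\A_{123}-\B_4$), the same witness-row analysis forcing the strings $\A\B\B\B$, $\C\C$ in columns $3,4$, and $\C\C\C\B$, the same two cubic moves and secondary parity functions, and the same reductions (case (ii) feeding into (i) via the flow $(0,0,\C,\C)$, and $\A\C\C\C$ reduced to $\C\C\C\B$ via the flow $(0,\C,\C,0)$ and a column transposition). No substantive differences to report.
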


\begin{corollary}\label{Reduction of four}

Suppose that a table $T$ contains two rows $r$ and $r'$ having disagreement string of cardinality four. Then, using moves if degree at most three, $T$ can be transformed in such a way that the disagreement string has cardinality at most three. Moreover, only the four columns of the disagreement string are involved in the reduction. 

\begin{proof}

Assume two rows $r$ and $r'$ do not agree on four elements. Up to the action of the group of flows $\mathfrak G$ and $\mathfrak S_4$, the elements of $r$ in the disagreement string can be set to be $0000$; all the possibilities for the elements of $r'$ in the disagreement string are $\A\A\A\A$, $\A\A\B\B$, $\A\A\B\C$, and $\A\A\A\B$. By Proposition \ref{aaaaa,aaaa,aaabc,aabb,aaab}, these disagreement strings can be reduced. Hence, performing the moves in the proof of the Proposition \ref{aaaaa,aaaa,aaabc,aabb,aaab}, we transform the tables in such a way that the cardinality of the disagreement string is at most three. 
\end{proof}

\end{corollary}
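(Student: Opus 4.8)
The plan is to obtain the corollary as an immediate consequence of Proposition~\ref{aaaaa,aaaa,aaabc,aabb,aaab}, once the two rows $r$ and $r'$ have been brought into a normal form. First I would restrict attention to the four columns forming the disagreement string of $r$ and $r'$, and use the action of the group of flows $\mathfrak G$ — which acts on each column independently by translation by an element of $G$ — to make all four entries of $r$ in those columns equal to $0$. Since $r$ and $r'$ disagree in exactly these four columns, the corresponding four entries of $r'$ are then all nonzero, hence lie in $\{\A,\B,\C\}$.

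Next I would enumerate, up to the available symmetries, the possible disagreement strings of $r'$. A word of length four over the three letters $\A,\B,\C$ must repeat a letter, so its multiset of entries is, up to $\textnormal{Aut}(G)\cong\mathfrak S_3$, one of $\A\A\A\A$, $\A\A\A\B$, $\A\A\B\B$, $\A\A\B\C$; permuting the four columns by an element of $\mathfrak S_4$ (a special case of the $\mathfrak S_n$-symmetry acting on tables) I may assume the disagreement string of $r'$ occurs in exactly this order. These are precisely cases (i)--(iv) of Proposition~\ref{aaaaa,aaaa,aaabc,aabb,aaab}.

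Then I would invoke the Proposition: in each of the four cases it provides a sequence of moves after which the two rows have Hamming distance at most three. To finish I would verify the two bookkeeping facts visible in its proof, namely that every move used there has degree at most three (only quadratic exchanges with the first row and a few cubic moves occur), and that every such move — together with the auxiliary flows such as $(0,0,\C,\C)$ and $(0,\C,\C,0)$ and the internal reductions of (ii) and (iv) to (i), and of $\A\C\C\C$ to $\C\C\C\B$ inside (iv) — touches only the four distinguished columns. Undoing the normalization of the first step, which was performed by invertible symmetries of the table, then produces the asserted sequence of moves of degree $\le 3$ supported on those four columns.

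The only real work I expect is the ``moreover'' clause: one must trace every move appearing in the proof of Proposition~\ref{aaaaa,aaaa,aaabc,aabb,aaab}, including the cubic moves written with a ``$?$'' entry, and confirm that none of them reads or writes a column lying outside the disagreement string. Granting this, the statement is a formal corollary of the Proposition.
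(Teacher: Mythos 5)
Your proposal is correct and follows essentially the same route as the paper: normalize $r$ to $0000$ on the four disagreement columns via $\mathfrak G$, enumerate the possible strings of $r'$ up to $\mathfrak S_4$ and $\textnormal{Aut}(G)$ to arrive at the four cases of Proposition~\ref{aaaaa,aaaa,aaabc,aabb,aaab}, and invoke that proposition. The paper's own proof is terser (it does not explicitly audit the cubic moves with ``$?$'' entries or restate that only the four columns are touched), whereas you correctly flag that this bookkeeping is the substance of the ``moreover'' clause; this is a faithful reading of what the corollary requires, not a different approach.
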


Now we deal with the disagreement string of length three, $\A\B\C$. We begin with preparatory lemmas. 

\begin{lemma}\label{exists 00 in abc}

Suppose that the disagreement string between $T_0$ and $T_1$ is $\A\B\C$, in columns $1,2,3$. Then we may assume that there exists a row $r'$ in $T_0$ containing the string $00$ in columns $1,2,3$.  

\begin{proof}

We introduce the counting function $0_{123}-\A_1-\B_2-\C_3$. By the action of the group of flows $\mathfrak G$, we may assume that the sum is nonnegative on $T_0$. Then there exists a row $r'$ in $T_0$ where the function is strictly positive. \\
\indent If in the row $r'$ we have $0_{123}=1$, then $\A_1=\B_2=\C_3=0$, by the assumption on the counting function evaluated at $r'$. By the action of the group of flows $\mathfrak G$, we may assume without loss of generality that $r'$ contains the string $0xy$ in columns $1,2,3$. Then $x\neq 0, \B$ by assumption.  Also, $x\neq \C$, as otherwise we would exchange the string $0\C$ with $\A\B$ in the first row of $T_0$, reducing the Hamming distance between $T_0$ and $T_1$. Hence $x=\A$. Similarly, $y\neq 0,\C$ and $y\neq \B$, as otherwise we exchange $0\B$ with $\A\C$ in the first row of $T_0$. Hence $r'$ contains the string $0\A\A$ in columns $1,2,3$, which we exchange with the first row of $T_0$. 
\end{proof}

\end{lemma}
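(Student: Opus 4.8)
The plan is to follow the template of the degree‑four reductions in Proposition~\ref{aaaaa,aaaa,aaabc,aabb,aaab}: choose a grading, drive its total to an extreme with a flow, read off a row of $T_0$ that then violates the resulting count, and from that row either extract the conclusion or shorten the Hamming distance with a quadratic move.

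First I would fix coordinates. By hypothesis the disagreement sits in columns $1,2,3$; after permuting these three columns among themselves we may assume the entrywise difference of the two distinguished rows is $\A$ in column $1$, $\B$ in column $2$, $\C$ in column $3$, and then, translating by a flow, that the first row of $T_0$ reads $\A\B\C$ on columns $1,2,3$ while agreeing with the first row of $T_1$ on the other columns; hence the first row of $T_1$ reads $000$ on columns $1,2,3$. Next I would introduce the counting function $h=0_{123}-\A_1-\B_2-\C_3$. Being a grading, $h$ has the same total on $T_0$ and on $T_1$; by applying a further flow and, if necessary, renaming the two tables, we may assume this common total is nonnegative while keeping the first row of $T_0$ equal to $\A\B\C$ on columns $1,2,3$. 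Since the first row of $T_0$ contributes $0-1-1-1=-3$ to the total, some other row $r'$ of $T_0$ must satisfy $h(r')\ge 1$; in particular $r'$ has at least one $0$ among columns $1,2,3$.

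If $r'$ has two $0$'s among columns $1,2,3$ we are done: it is the row claimed by the lemma. Otherwise $r'$ has exactly one $0$ there, and $h(r')\ge 1$ forces each of the three subtracted terms of $h$ to vanish, so $r'$ carries no $\A$ in column $1$, no $\B$ in column $2$, and no $\C$ in column $3$. After transposing two of the columns $1,2,3$ and compensating with the automorphism of $G$ that fixes $\A\B\C$ (so that $h$ is unchanged), I would assume the $0$ lies in column $1$, so that $r'$ reads $0xy$ on columns $1,2,3$ with $x\in\{\A,\C\}$ and $y\in\{\A,\B\}$. If $x=\C$, exchanging the substring $0\C$ in columns $1,2$ of $r'$ with the substring $\A\B$ of the first row of $T_0$ is a quadratic move after which the first row of $T_0$ agrees with the first row of $T_1$ in column $1$ as well, so the Hamming distance drops below $3$; if $y=\B$, the analogous exchange of $0\B$ in columns $1,3$ of $r'$ with $\A\C$ does the same. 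The only remaining case is $r'=0\A\A$, and exchanging this whole triple with $\A\B\C$ in the first row of $T_0$ is again a quadratic move lowering the Hamming distance. Since we have chosen the first rows to minimise the Hamming distance, after performing such moves we may assume the single-$0$ case does not arise; hence $r'$ has two $0$'s, which proves the lemma.

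The argument is not deep, and I do not expect a genuine obstacle; the two points that need care are the choice of the grading $h$ — it must be arranged so that a row with $h\ge 1$ and only one $0$ in columns $1,2,3$ is forced into exactly the patterns against which a Hamming-reducing quadratic move with $\A\B\C$ is available — and the bookkeeping of the symmetries, since the successive normalizations (difference $\A\B\C$, $h$-total nonnegative, the $0$ of $r'$ in column $1$) each consume part of the available $\mathfrak{G}$-, $\mathfrak{S}_n$- and $\textnormal{Aut}(G)$-action and have to be shown simultaneously imposable, or re-imposed after each reduction. The one mildly tedious ingredient is the finite check of the quadratic moves in the last step.
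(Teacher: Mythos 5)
Your proposal is correct and follows essentially the same route as the paper: the same counting function $0_{123}-\A_1-\B_2-\C_3$, the same flow normalization making its total nonnegative (the paper glosses over the table-swap you make explicit), the same forced form $0xy$ with $x\in\{\A,\C\}$, $y\in\{\A,\B\}$, and the same three Hamming-reducing quadratic exchanges with the first row in the single-zero case. The only cosmetic difference is that you place the lone zero in column $1$ via a column transposition compensated by an automorphism of $G$, where the paper invokes the flow action; both are legitimate uses of the available symmetries.
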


\begin{lemma}\label{00c in abc}

We may assume that the row $r'$ of Lemma \ref{exists 00 in abc} in $T_0$ contains the string $00\C$ in columns $1,2,3$. More generally, for every row $r''$ containing the string $00$ in columns $1,2,3$, the nonzero element of $r''$ in columns $1,2,3$ coincides with the corresponding entry of the first row of $T_0$. 
\begin{proof}
The row $r'$ contains a string with exactly two elements equal to $0$ in the columns $1,2$ and $3$. By the action of $\mathfrak S_n$, we may assume that $r'$ contains the string 
$00x$ in columns $1,2,3$. Note that $x\neq \A,\B$, as in both cases we make a quadratic move between $r'$ and the first row of $T_0$, reducing the Hamming 
distance between $T_0$ and $T_1$. Thus $x=\C$. By the action of the group of flows $\mathfrak G$, in every row $r''$ containing the string $00$ in columns $1,2,3$, the nonzero entry coincides with the corresponding entry of the first row of $T_0$. 
\end{proof}
\end{lemma}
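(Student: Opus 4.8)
The plan is to run, in a more bookkeeping‑friendly order, the same kind of single quadratic move against the first row of $T_0$ that drives the proof of Lemma~\ref{exists 00 in abc}: I will prove the ``more generally'' clause first and then read off the statement about $r'$ as a special case. Following the conventions of Section~\ref{notation}, I first normalise with the group of flows so that the first row of $T_1$ vanishes in every column; then the first row of $T_0$ equals $(\A,\B,\C,0,\dots,0)$, with $\A$ in column~$1$, $\B$ in column~$2$, $\C$ in column~$3$, and zeros throughout the agreement columns, and I will use the relation $\A+\B+\C=0$ in $G$ repeatedly. By Lemma~\ref{exists 00 in abc} the row $r'$ has exactly one nonzero entry among columns $1,2,3$; permuting those three columns by a suitable element of $\mathfrak S_n$ and composing with the automorphism of $G$ that restores the first row of $T_0$ to the form above (this action fixes the zero first row of $T_1$), I may assume that entry lies in column~$3$, and I call its value $x$.

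The engine is the following. Let $r$ be \emph{any} row of $T_0$ carrying the string $00$ among columns $1,2,3$, with its unique nonzero entry $y$ in column $\ell$, and write $d_1=\A$, $d_2=\B$, $d_3=\C$. Assume $y\neq d_\ell$. Because in $G$ the two nonzero elements other than $d_\ell$ are precisely $\{\,d_\ell+d_m : m\in\{1,2,3\}\setminus\{\ell\}\,\}$ (this is where $\A+\B+\C=0$ enters), there is a column $m\neq\ell$ in $\{1,2,3\}$ with $y=d_\ell+d_m$. Then the restrictions of $r$ and of the first row of $T_0$ to the columns $\{\ell,m\}$ have equal sums, both equal to $y$; hence interchanging the entries of these two rows in columns $\ell$ and $m$ is a legitimate move — it is quadratic, it keeps the column multisets, and since those two sums agree both modified rows are still flows. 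After this move the first row of $T_0$ has precisely two nonzero entries among columns $1,2,3$ (and still only zeros in the agreement columns), so its Hamming distance to the first row of $T_1$ has dropped to $2$: the disagreement string $\A\B\C$ has been reduced, which is what Section~\ref{part1} asks for. Therefore we may assume $y=d_\ell$ for every such row $r$, which is the ``more generally'' clause; applied to $r'$, whose nonzero entry is in column~$3$ after the normalisation, it gives $x=\C$, i.e.\ $r'$ carries $00\C$ in columns $1,2,3$.

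The one point that needs care is the choice of the pair of columns to interchange: it must be made so that the two modified rows remain flows, and this is precisely where the structure of $G$ — the relation $\A+\B+\C=0$, equivalently that the two nonzero elements of $G$ distinct from any given one sum to it — is needed. Once that choice is pinned down, everything else (that the move has degree two, that it preserves compatibility of the tables, that the altered first row of $T_0$ falls to Hamming distance $2$) is immediate, so I do not anticipate any further obstacle.
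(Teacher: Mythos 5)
Your proof is correct and takes essentially the same route as the paper's: you exhibit a quadratic move between the first row of $T_0$ and any row carrying $00$ among columns $1,2,3$ whose nonzero entry disagrees with the first row, and observe that the move drops the Hamming distance between the first rows to $2$. Your write-up is a bit more explicit than the paper's (you pin down which pair of columns to swap via $\A+\B+\C=0$, and you supplement the $\mathfrak S_n$-action with a group automorphism to restore the normalisation $\A\B\C$ of the first row), but the underlying argument is the one the paper uses.
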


\begin{lemma}\label{00c implies in abc}

Suppose that in $T_0$ we have a row $r'$ containing $00\C$. Then this is the only string that a row with $00$ in columns $1,2,3$ may contain. 

\begin{proof}
Since the row $r'$ in $T_0$ contains $00\C$, then it cannot contain another copy of $\C$, as we would exchange with the first row of $T_0$, 
thus reducing the Hamming distance. Hence $r'$ contains $00\C\A\B$, since it is a flow.  Assume there exists another row $r''$ containing a string with $00$, different from $00\C$. By Lemma  \ref{00c in abc}, the unique nonzero entry in columns $1,2,3$ of $r''$ agrees with the corresponding entry of the first row of $T_0$. Assume that $r''$ contains $0\B0$ in columns $1,2,3$. Then we apply the cubic move $\A\B\C 00+00\C\A\B+0\B 0 = 0\B 00\B+0\B\C \A0+\A 0\C$, reducing the Hamming distance. For a row containing $\A 00$ we conclude in the same way. 
\end{proof}

\end{lemma}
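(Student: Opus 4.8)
As is customary in this section, I read the statement as a dichotomy: either the Hamming distance can be strictly decreased by moves of degree at most three, or every row of $T_0$ with exactly two zeros on columns $1,2,3$ carries the string $00\C$ there. Recall that under the normalization of Section~\ref{part0} the first row of $T_0$ reads $\A\B\C 0\cdots 0$ and the first row of $T_1$ reads $0\cdots 0$, and that by Lemma~\ref{00c in abc} the nonzero entry on columns $1,2,3$ of any row with exactly two zeros there agrees with the corresponding entry of the first row of $T_0$; hence such a row reads $00\C$, $0\B 0$, or $\A 00$ on columns $1,2,3$. It therefore suffices to show that, given the row $r'$ of type $00\C$ and a further row $r''$ of type $0\B 0$ or $\A 00$, a move of degree three lowers the Hamming distance.

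First I would pin down $r'$. Arguing as in Lemma~\ref{00c in abc} --- trading the offending entries against those of the first row of $T_0$ --- one may assume $r'$ carries no second copy of $\C$; for example, a row reading $00\C$ on columns $1,2,3$ and having a single further nonzero entry, necessarily another $\C$, would already lie at Hamming distance $2$ from the first row of $T_1$. Since $r'$ is a flow, its entries off column $3$ sum to $\C$, and with no further $\C$ available this forces at least one $\A$ and at least one $\B$ among them; permuting columns by $\mathfrak S_n$, I may assume $r'$ reads $00\C\A\B$ on columns $1,\dots,5$.

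When $r''$ has type $0\B 0$ on columns $1,2,3$, I would apply to $T_0$ the degree-three move on columns $1,\dots,5$ acting on the first row of $T_0$, on $r'$, and on $r''$:
$$\A\B\C 00 \;+\; 00\C\A\B \;+\; 0\B 0 \;=\; 0\B 00\B \;+\; 0\B\C\A 0 \;+\; \A 0\C .$$
Column by column the two triples of rows have equal multisets; keeping columns $6,\dots,n$ of each of the three rows fixed and using $\A+\B+\C=0$, all three output rows are flows --- in particular the new first row $0\B 00\B 0\cdots 0$ is a flow since $\B+\B=0$. After the move $T_0$ contains the row $0\B 00\B 0\cdots 0$, which disagrees with the first row of $T_1$ only in columns $2$ and $5$: the Hamming distance is now $2$. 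The case where $r''$ has type $\A 00$ reduces to this one: the composite of the automorphism $\A\leftrightarrow\B$ of $G$ with the transposition of columns $1$ and $2$ fixes both first rows and the disagreement string, sends $\A 00$ to $0\B 0$, and sends $00\C\A\B$ to $00\C\B\A$, which a further column transposition restores to $00\C\A\B$. In either case the Hamming distance drops, which is the assertion.

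The step I expect to be the main obstacle is guessing the correct degree-three move in the $0\B 0$ case and verifying it entry by entry: the move is essentially dictated by the requirement that one of its output rows be a flow supported on only two columns --- hence of the shape $g\,0\cdots 0\,g$ --- and therefore at Hamming distance $2$ from the all-zero first row of $T_1$. The only other delicate point is the preliminary reduction that $r'$ carries no second $\C$, a short move argument of the kind used in Lemma~\ref{00c in abc}.
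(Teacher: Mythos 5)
Your proposal is correct and follows essentially the same route as the paper: the same preliminary normalization (no second $\C$ in $r'$, forcing $r'$ to read $00\C\A\B$ after relabeling columns), the identical cubic move $\A\B\C 00 + 00\C\A\B + 0\B 0 = 0\B 00\B + 0\B\C\A 0 + \A 0\C$ for the $0\B 0$ case, and a reduction of the $\A 00$ case by symmetry. The only substantive differences are presentational: you spell out the $\A\leftrightarrow\B$ automorphism and column transpositions where the paper just says ``in the same way,'' and your brief example for excluding a second $\C$ only covers the case of a single extra nonzero entry --- the general exclusion still needs the explicit quadratic move trading $\A\B 0$ in the first row against $00\C$ in $r'$ on columns $1,2,j$, which you correctly gesture at via Lemma~\ref{00c in abc}.
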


\begin{lemma}\label{ab0ab in abc}

As in the proof of Lemma \ref{00c implies in abc}, we assume that $r'$ contains $00\C\A\B$ in columns $1,2,3,4,5$. 
There exists a row $r''$ in $T_0$ such that $r''(3) = 0$ and, moreover, $r''$ contains the string $\A\B 0 \A\B$ in columns $1,2,3,4,5$. 

\begin{proof}

Such a row $r''$ exists in $T_0$ by the compatibility of the two tables. The structure of $T_0$ is: 
$$
T_0=\begin{bmatrix}
\A & \B & \C & 0  & 0 &\ldots & 0\\
0 & 0 & \C  & \A &\B &\ldots & \ldots \\
x & y & 0  & z &w &\ldots & \ldots \\
\ldots & \ldots & \ldots  & \ldots &\ldots  &\ldots & \ldots\\
\end{bmatrix}.
$$
\noindent By Lemma \ref{00c implies in abc}, we have $x,y\neq 0$. Analogously, we have $z,w\neq 0$ by applying Lemma \ref{00c implies in abc}, upon
exchanging the string $\A\B\C 00$ in the first row with $00\C\A\B$ in the second row.\\
\noindent Note that  $x\neq \B$ and $y\neq \A$, as otherwise, exchanging with the first row, in the first case with $\A\C$ and in the second with $\B\C$, we would reduce the Hamming distance; analogously, $z\neq \B$ and $w\neq \A$. Furthermore, by Lemma \ref{00c in abc}, we have $x,y \neq \C$ as otherwise we would create
the string $\C 00$ and $0\C 0$ respectively. Analogously $z,w\neq \C$. Hence the only remaining possibility is $xy=\A\B$ and $zw=\A\B$. \end{proof}
\end{lemma}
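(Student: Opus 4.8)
The plan is to produce the row $r''$ by a compatibility argument and then to pin down all of its first five entries by repeatedly excluding forbidden values, in the spirit of Lemmas~\ref{exists 00 in abc}--\ref{00c implies in abc}. Since the first row of $T_1$ is the all-zero flow, column $3$ of $T_1$ contains a $0$; by compatibility of $T_0$ and $T_1$, so does column $3$ of $T_0$, and any row of $T_0$ realizing this $0$ is the sought $r''$ (it differs from the first row of $T_0$ and from $r'$, both of which carry $\C$ in column $3$). Writing $r''=(x,y,0,z,w,\dots)$ as in the displayed table $T_0$, it remains to show $x=z=\A$ and $y=w=\B$; in fact the argument below does this for every row of $T_0$ with a $0$ in column $3$.

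For the entries $x$ and $y$: first, $x\neq 0$ and $y\neq 0$, since otherwise $r''$ would carry the block $00$ among columns $1,2,3$ in a pattern other than $00\C$ (the two zeros lying in columns $1,3$ or $2,3$), contradicting Lemma~\ref{00c implies in abc}. Next, $x\neq\B$: the degree-two repairing on columns $1$ and $3$ between the first row of $T_0$ (entries $\A,\C$ there) and $r''$ (entries $\B,0$ there) replaces the first row's entries in columns $1,3$ by $\B,0$, so column $3$ of the first row now agrees with $T_1$, strictly lowering the Hamming distance; the same move on columns $2,3$ gives $y\neq\A$. Finally, $x\neq\C$: exchanging the entries in columns $1$ and $3$ between $r'=(0,0,\C,\A,\B,\dots)$ and $r''=(\C,y,0,\dots)$ is a legal, flow-preserving degree-two move that turns $r'$ into a row $(\C,0,0,\A,\B,\dots)$ carrying the block $00$ in columns $2,3$ with nonzero entry $\C$ in column $1$, contradicting Lemma~\ref{00c in abc}; the analogous move on columns $2,3$ gives $y\neq\C$. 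Hence $x\in G\setminus\{0,\B,\C\}=\{\A\}$ and $y\in G\setminus\{0,\A,\C\}=\{\B\}$.

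The constraints $z=\A$ and $w=\B$ follow by the symmetric argument obtained by interchanging the roles of the first row of $T_0$ and of $r'$, and correspondingly of the column pairs $\{1,2\}$ and $\{4,5\}$; concretely this symmetry is realized by composing the transposition of columns $1\leftrightarrow 4$ and $2\leftrightarrow 5$ with addition of the flow $(\A,\B,0,\A,\B,0,\dots,0)\in\mathfrak G$, which fixes both the first row of $T_0$ and the row $r'$ (using that $\A$ and $\B$ have order two in $G$). Therefore $r''$ contains $\A\B0\A\B$ in columns $1,\dots,5$, as claimed. The only delicate points here are bookkeeping ones: one must check that each repairing used above is indeed a compatible---hence automatically flow-preserving---move of degree at most two involving only the named columns, so as not to disturb the outer induction, and one must verify that the displayed symmetry really does fix the two distinguished rows. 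This is the step I would expect to absorb most of the care.
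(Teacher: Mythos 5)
Your treatment of $x,y$ is correct and follows the paper's proof step for step: $x,y\neq 0$ via Lemma~\ref{00c implies in abc}, $x\neq\B$ and $y\neq\A$ via quadratic moves with the first row of $T_0$, and $x,y\neq\C$ via a quadratic exchange with $r'$ that would produce a row contradicting Lemma~\ref{00c in abc}; hence $x=\A$, $y=\B$. (One small imprecision: after the $x=\B$ exchange the first row becomes $\B\B 0\dots$, so the Hamming distance to the all-zero row drops from $3$ to $2$ because one agreement is gained while the others are only modified; the phrase ``column $3$ now agrees'' on its own does not quite explain the \emph{strict} drop, but the conclusion is right.)

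The argument for $z,w$ has a genuine gap. The symmetry you propose---the column transposition $(1\,4)(2\,5)$ followed by addition of the flow $(\A,\B,0,\A,\B,0,\dots,0)$---does fix the first row of $T_0$ and the row $r'$, as you check, but it does \emph{not} fix the first row of $T_1$: the all-zero flow is sent to $\A\B 0\A\B 0\cdots 0$. This destroys the normalization that underpins Lemmas~\ref{exists 00 in abc}--\ref{00c implies in abc} (that the first row of $T_1$ is $000\dots 0$, so that the disagreement string with the first row of $T_0$ is $\A\B\C$ in columns $1,2,3$), and the $x,y$ argument cannot be invoked verbatim in the transformed tables. Worse, if you then act by the same flow and the same column transposition once more so as to restore that normalization, you return to the original tables, so the transformation yields no new information. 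The paper's mechanism is different in a way that matters: it first performs a degree-two, compatible, flow-preserving move \emph{inside $T_0$ only}, exchanging the strings $\A\B\C 00$ and $00\C\A\B$ occupying columns $1,\dots,5$ of the first row and of $r'$ (both strings sum to $0$, so flows are preserved), and only then applies the column transposition $(1\,4)(2\,5)$ to both tables. This combination leaves $T_1$ entirely untouched in the relevant sense (its all-zero first row is fixed by the column permutation), fixes the first row of $T_0$ and $r'$, and sends $r''$ to $(z,w,0,x,y,\dots)$; applying the $x,y$ argument to this new $r''$ then delivers $z=\A$, $w=\B$. Replacing your flow-plus-transposition by this move-plus-transposition repairs the proof.
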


\begin{lemma}\label{counting funct at most -1 in abc}

The counting function $0_{12345}-\A_{14}-\B_{25}-\C_3$ is at most $-1$ on every row of $T_0$. 

\begin{proof}

For the sake of contradiction, suppose there exists a row $r$ in $T_0$, where the counting function is nonnegative. In $T_0$, there exists a row $r''$ with $r''(3) = 0$. By Lemma \ref{ab0ab in abc}, the row $r''$ contains the string $\A\B 0\A\B$. \\
\indent If in the row $r$ we have $0_{12345} \geq 3$, then $r(3)\neq 0$, again, by Lemma \ref{ab0ab in abc}. Hence we have at least two differences
with $r''$ and we can make a quadratic move between $r$ and $r''$. This reduces the Hamming distance. Thus on the row $r$ one has $0_{12345}\leq 2$. \\
\indent If $0_{12345}=2$ on $r$, we have the following possibilities:

\begin{enumerate}

\item[(i)] $r$ contains $00\C xy$;

\item[(ii)] $r$ contains $xyz00$;

\item[(iii)] $r$ contains $x0yz0$;

\item[(iv)] $r$ contains $x0y0z$.

\end{enumerate}

\indent In case (i), we have $x, y\neq 0$ by the assumption on the value of the counting function.  Additionally, $x,y\neq \C$, as we would exchange the string $00\C\C$ with the first row in $T_0$. Consider the differences between $r$ and $r''$. If $xy\neq \A\B$, then we can make a move involving column $3$, at most one of columns $1,2$ and either column $4$ or $5$ between $r$ and $r''$. This allows us to exchange $\C$ in $r$ with $0$ in $r''$; this contradicts Lemma \ref{ab0ab in abc}. Hence $xy = \A\B$, which on the other hand contradicts the nonnegativity of the counting function. Exchanging $r'$, the row appearing in Lemma \ref{ab0ab in abc} 
containing $00\C\A\B$, with the first row of $T_0$, case (ii) is the same as case (i). \\
\indent In case (iii), $x\neq 0$ by the assumption on the value of the counting function. Moreover, $x\neq \C$ since we would exchange $\C 0$ in $r$ with the string $\A\B$ in $r''$ in columns $1,2$, contradicting Lemma \ref{00c in abc}. We also have $x\neq \B$, because we could make a quadratic move in columns $1,5$ between $\B 0$ in $r$ with $0\B$ in $r'$, obtaining the string $\B 0 \C\A 0$ in $r'$. Now, we exchange in columns $1,3$, the string $\B\C$ in $r'$ with $\A 0$ in $r''$, which produces the string $\A 00\A 0$; this reduces the Hamming distance. Finally, if $x=\A$, we exchange in columns $1,2,5$, the string $\A 00$ in $r$ with $\A\B \B$ in $r''$, obtaining $\A 00\A 0$, which again reduces the Hamming distance. \\
\indent In case (iv), $x\neq 0$ by the assumption on the value of the counting function. Additionally, $x\neq \C$, because otherwise we would exchange in columns $1,2$ the string $\C 0$ in $r$ with $\A\B$ in $r''$, thus contradicting Lemma \ref{ab0ab in abc}. Also, $x\neq \B$, as we would make a quadratic move on columns $1,2,4$ between $r$ and $r''$, contradicting again Lemma \ref{ab0ab in abc}. Analogously, $z\neq 0, \A,\C$. Hence $r$ contains the string $\A 0y0\B$. We exchange in columns $1,2,4,5$ the string $\A 0 0\B$ in $r$ with $00\A\B$ in $r'$, which produces $00y$ in $r$, which in turn implies $y=\C$ by Lemma \ref{00c in abc}. This contradicts the nonnegativity of the counting function. \\
\indent If $0_{12345}=1$, by symmetry, we may assume $r(2)=0$ or $r(3)=0$. If $r(3)=0$, then by Lemma \ref{ab0ab in abc}, $r$ contains $\A\B 0\A\B$, which contradicts the nonnegativity of the counting function. If $r(2)=0$, then $r$ contains $x0yzt$. Then $z\neq 0$ by the assumption. Moreover, $z\neq \C$, as we would exchange $r$ with $r''$ in columns $2,4$, contradicting Lemma \ref{ab0ab in abc}. \\
\indent If $z=\A$, we now consider the value of $x$. We have $x\neq 0$ by assumption. We have $x\neq \A$ by assumption on the nonnegativity of the counting function. Moreover, $x\neq \C$, since otherwise we would exchange in columns $1,2$ the string $\C 0$ in $r$ with $\A\B$ in $r''$ contradicting Lemma \ref{ab0ab in abc}. Hence $x=\B$, i.e., $r$ contains the string $\B 0y\A t$. Now, $t\neq 0$, by the assumption on the value of $0_{12345}$. Moreover $t\neq \B$, by the assumption on the value of the counting function on $r$. Also notice that $t\neq \C$, as otherwise we exchange in columns $1,5$ the string $\B\C$ of $r$ with $\A 0$ of the first row of $T_0$, and then we exchange $\B\B$ from the first row with $00$ in $r'$ reducing the Hamming distance. Therefore $r$ contains the string $\B 0y\A\A$, which we exchange with the string $\A\B 0\A\B$ in $r''$ in columns $1$ and $5$, contradicting Lemma \ref{ab0ab in abc}. 

If $z=\B$, then $r$ contains $x0y\B t$. Furthermore, $t\neq 0$ by assumption on the value of $0_{12345}$. Moreover, $t\neq \A$ exchanging in columns $4,5$ the string $\B\A$ of $r$ with $\A\B$ of $r''$, contradicting Lemma \ref{ab0ab in abc}. Analogously, we would contradict Lemma \ref{ab0ab in abc} for $t=\C$, exchanging in columns $2,4,5$, the string 
$0\B\C$ in $r$ with $\B\A\B$ in $r''$. Hence $r$ contains the string $x0y\B\B$. Here $x\neq 0$, by assumption. Moreover, $x\neq \A$, because of the nonnegativity of the counting function. Also, $x\neq \C$, because we would contradict Lemma \ref{ab0ab in abc}, exchanging $\C 0$ of $r$ with $\A\B$ of $r''$. Therefore $r$ contains $\B 0 y\B\B$, but we exchange it with $\A\B 0\A\B$ in columns $1,4$ contradicting Lemma \ref{ab0ab in abc}. \\
\indent If $0_{12345}=0$, then $\A_{14}=\B_{25}=\C_3=0$, by the assumption on the nonnegativity of the function on $r$. Thus $r$ contains $xyztw$ different from $\A\B 0\A\B$ in columns $1,2,4,5$. Hence we have two identical differences between $r$ and $r''$, which allow to make a quadratic move, contradicting Lemma \ref{ab0ab in abc}. 
\end{proof}

\end{lemma}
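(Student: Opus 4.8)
The plan is to argue by contradiction: assume that $f:=0_{12345}-\A_{14}-\B_{25}-\C_3$ is nonnegative on some row $r$ of $T_0$, and derive a contradiction from the rigidity accumulated in Lemmas \ref{exists 00 in abc}--\ref{ab0ab in abc}. Recall that $T_0$ contains the first row $\A\B\C0\cdots0$, a row $r'$ equal to $00\C\A\B$ on columns $1,\dots,5$, and a row $r''$ equal to $\A\B0\A\B$ on columns $1,\dots,5$; moreover \emph{every} row of $T_0$ with a $0$ in column $3$ must coincide with $\A\B0\A\B$ on columns $1,\dots,5$ (Lemma \ref{ab0ab in abc}), and every row with two zeros among columns $1,2,3$ must carry the string $00\C$ there, this being the only such string allowed (Lemmas \ref{00c in abc}, \ref{00c implies in abc}). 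The function $f$ is tailored so that all three distinguished rows already satisfy $f\le -1$: the first two evaluate to $-1$ and $r''$ to $-3$. Hence a row $r$ with $f(r)\ge 0$ must look very unlike them, and the whole argument shows that no such $r$ can coexist with the rigidity above.

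I would organize the proof by the value of $m:=0_{12345}(r)$, the number of zeros of $r$ among columns $1,\dots,5$, reducing each time by the symmetries that preserve the configuration (certain simultaneous transpositions of columns among $\{1,2,4,5\}$, combined with automorphisms of $G$, all of which fix $f$; in particular these act transitively on $\{1,2,4,5\}$). If $m\ge 3$, then $r(3)\ne 0$, since otherwise Lemma \ref{ab0ab in abc} forces $r=\A\B0\A\B$ on columns $1,\dots,5$, which has only one zero there; then at least three of columns $1,2,4,5$ vanish on $r$ while $r''$ carries $\A,\B,\A,\B$ there, so two of the vanishing columns carry the same entry of $r''$, and a quadratic move between $r$ and $r''$ on those two columns (the two disagreement elements sum to $0$, so the rows stay flows) turns $r''$ into a row with two zeros among columns $1,2,3$ whose nonzero entry there is not $\C$, contradicting Lemma \ref{00c implies in abc} --- or else it directly lowers the Hamming distance. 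If $m=0$, then $\A_{14}=\B_{25}=\C_3=0$ on $r$ by nonnegativity, so $r$ and $r''$ disagree in all of columns $1,\dots,5$; among these five disagreements two must carry the same nonzero element of $G$, yielding a quadratic move between $r$ and $r''$ that contradicts Lemma \ref{ab0ab in abc}. If $m=1$, one may assume the unique zero lies in column $2$ or in column $3$; the case $r(3)=0$ gives $r=\A\B0\A\B$, hence $f(r)=-3$, and in the case $r(2)=0$ the inequality $f(r)\ge 0$ forces the remaining entries of $r$ one after another, tightly enough that a two- or three-column move against $r''$, $r'$, or the first row yields a contradiction with Lemma \ref{00c in abc} or \ref{ab0ab in abc}.

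The substantive value is $m=2$. Up to the configuration symmetries the two zeros can be placed so that $r$ is one of $00\C xy$, $xyz00$, $x0yz0$, $x0y0z$ on columns $1,\dots,5$ (a zero in column $3$ is excluded by Lemma \ref{ab0ab in abc}, since it would force exactly one zero, and the entry in column $3$ of $00\C xy$ is forced by Lemma \ref{00c in abc}); moreover $xyz00$ reduces to $00\C xy$ upon first exchanging $r'$ with the first row of $T_0$, so essentially three configurations remain. In each, $f(r)\ge 0$ forces the displayed unknowns to be nonzero and rules out the value $\C$ wherever a repeated $\C$ could be traded against the first row or against $r'$; the few surviving possibilities are then killed by a carefully chosen compatible move of degree at most three --- using column $3$ together with one or two of columns $1,2,4,5$, with partner row $r''$ when the aim is to create a new row with a zero in column $3$ other than $\A\B0\A\B$ (contradicting Lemma \ref{ab0ab in abc}), or partner row $r'$ when the aim is to create a second row with two zeros among columns $1,2,3$ of a form other than $00\C$ (contradicting Lemma \ref{00c in abc}), or by a quadratic move lowering the Hamming distance.

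I expect the real difficulty to be organizational rather than conceptual. There is no single trick; one has to, in each sub-pattern, choose the right partner row and the right two or three columns so that the resulting move is columnwise compatible, has degree at most three, keeps all rows flows (the constraint that the swapped disagreement elements sum to $0$), and demonstrably either shrinks the Hamming distance or produces a row forbidden by one of Lemmas \ref{00c in abc}--\ref{ab0ab in abc}. Keeping accurate track of which group element ends up in which row after each exchange, while preserving the flow condition throughout, is where essentially all of the work lies.
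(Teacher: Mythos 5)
Your outline follows exactly the paper's strategy: assume $f:=0_{12345}-\A_{14}-\B_{25}-\C_3$ is nonnegative on some row $r$ of $T_0$, invoke the rigid rows $r'=00\C\A\B$ and $r''=\A\B 0\A\B$ guaranteed by Lemmas \ref{00c implies in abc} and \ref{ab0ab in abc}, and split by $m=0_{12345}(r)$. Your treatment of $m\ge 3$ and $m=0$ is correct and essentially the paper's argument made explicit: in both you use pigeonhole on the four values $\A,\B,\A,\B$ of $r''$ in columns $1,2,4,5$ to find two columns where $r$ disagrees with $r''$ by the same element, then the quadratic swap produces a row with a forbidden zero pattern (violating Lemma \ref{00c implies in abc} or \ref{ab0ab in abc}). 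Your case list for $m=2$ also coincides with the paper's items (i)--(iv), and you correctly note that (ii) reduces to (i) after exchanging the roles of the first row and $r'$.

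However, there is a genuine gap: for $m=2$ (subcases $x0yz0$ and $x0y0z$) and for $m=1$ you never actually produce the moves. You assert that after eliminating the easy possibilities ``the few surviving possibilities are then killed by a carefully chosen compatible move of degree at most three,'' and for $m=1$ that the inequality ``forces the remaining entries of $r$ one after another, tightly enough that a two- or three-column move \ldots yields a contradiction,'' and you then explicitly concede that ``essentially all of the work lies'' in choosing these moves. But this is precisely the content of the lemma. In the paper, case (iii) alone requires first ruling out $x=0,\C,\B$ and then, for $x=\A$, an explicit three-column exchange of $\A 00$ in $r$ against $\A\B\B$ in $r''$; case (iv) needs a four-column exchange against $r'$; and the $m=1$ branch splits further on $z\in\{\A,\B\}$ with several more moves in each subcase, one of them genuinely needing a two-step chain through the first row. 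It is also not true that $f(r)\ge 0$ ``forces'' the remaining entries in the $m=1$ case --- it only bounds $\A_1+\A_4+\B_5+\C_3\le 1$, which still leaves a small tree of possibilities that the paper checks one by one. So the proposal is a correct skeleton that matches the published proof, but the load-bearing case analysis ($m=1,2$) is only promised, not carried out, and it cannot be waved away as routine: it is several paragraphs of carefully chosen exchanges in the paper. To make this a proof you would need to actually exhibit, for each of the three essentially distinct $m=2$ configurations (up to the symmetry you describe) and for each sub-branch of $m=1$, a specific move against $r'$, $r''$, or the first row, verify that it is compatible and preserves the flow condition, and show that its outcome is forbidden by one of Lemmas \ref{00c in abc}--\ref{ab0ab in abc} or reduces the Hamming distance.
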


\begin{proposition}\label{abc}

The disagreement string $\A\B\C$ can be reduced. 

\begin{proof}

By Lemma \ref{counting funct at most -1 in abc}, the counting function $0_{12345}-\A_{14}-\B_{25}-\C_3$ is at most $-1$ on every row of $T_0$. As a consequence, there exists a row $r$ in $T_1$, where the function is at most $-2$. By the value of the counting function on the row $r$, the entries in $r$ must agree in two, three, four or five entries with $\A\B\C\A\B$. \\
\indent If $r$ agrees in five entries, it contains $\A\B\C\A\B$. We exchange $\A\B\C$ with $000$ in the first row of $T_1$, which reduces the Hamming distance between $T_0$ and $T_1$. If $r$ agrees in four entries, we denote by $x$ the element where $r$ does not agree with $\A\B\C\A\B$. If $x\neq r(3)$, then 
we would have either the string $\A\B\C$ or $\C\A\B$, which is also in table $T_0$; this reduces the Hamming distance. Suppose $r$ contains $\A \B x \A\B$. If $x=0$, the table $T_0$ contains the same flow. If $x=\A$ or $\B$, we exchange $\A\A$ or $\B\B$ with $00$ in the first row of $T_1$. \\
\indent If $r$ agrees with $\A\B\C\A\B$ in three entries, we denote by $xy$ the remaining two. First, note that if $xy$ are in columns $1,2$ or in columns $4,5$, we exchange $\A\B\C$ or $\C\A\B$ with $000$ in the first row of $T_1$; this decreases the Hamming distance. \\
\indent  Assume that both of $x$ and $y$ are in columns $1,2,3$. If $r$ contains $x\B y\A\B$, then $x\neq \A, \B$, because otherwise we would exchange the string $\A\A$ or $\B\B$ with the first row of $T_1$ reducing the Hamming distance. Whence $x=0,\C$. Moreover $y\neq \C$, by definition. Additionally, $y\neq \B$, because we would move $\B\B$ to the first row of $T_1$, reducing the Hamming distance. It follows that $y=0,\A$. On the other hand, $xy\neq 00$, since the counting function $0_{12345}-\A_{14}-\B_{25}-\C_3$ is at most $-2$ on $r$. Furthermore, $x+y\neq \B$, as otherwise we would exchange $x\B y$ with $000$ in the first row of $T_1$, reducing the Hamming distance between $T_0$ and $T_1$. Hence $r$ contains either $\C\B 0 \A\B$ or $0\B \A \A \B$. For the first, we exchange in columns $2,3,5$, the string $\B 0\B$ with $000$ in the first row of $T_1$, and we exchange $\A\B 0 \A\B$ in $T_0$ with the first row of $T_0$. For the second, we exchange $0\B \A \A \B$ with the first row of $T_1$ and $\A\B 0 \A\B$ in $T_0$ with the first row of $T_0$, which reduces the Hamming distance. \\
\indent If $r$ contains $\A x y\A\B$, then applying the automorphism $\A \leftrightarrow \B$ and a transposition between columns $1$ and $2$, we are in the case when the row $r$ contains $x\B y\A\B$. \\
\indent If  $x,y$ are both in columns $3,4,5$, we apply analogous moves as the ones featured above. Then we may assume that $x$ is either in column $1$ or $2$, and $y$ is either in column $4$ or $5$. In all these cases, we have $x=0$ and $y=0$, as all the other possibilities are excluded by exchanging with the first row of $T_1$. The fact that $x=y=0$ contradicts the value of the counting function on $r$. \\
\indent If $r$ agrees with $\A\B\C\A\B$ in two entries, we have $0_{12345}=0$ on $r$, since the value of the counting function $0_{12345}-\A_{14}-\B_{25}-\C_3$ on $r$ is at most $-2$. In columns $1,2,3$, there is at least one entry $x$ which does not agree with the corresponding entry in $\A\B\C$, because otherwise we would move $\A\B\C$ to the first row of $T_1$, reducing the Hamming distance. Denoting the elements where they do not agree by $x,y,z$, the strings that $r$ may contain are: $\A xy\A z$, $\A\B xyz$, and $\A xyz\B$. Note that these are all the possible, as the remaining ones are resolved in the same way upon exchanging the string $\A\B\C 00$ in the first row with $00\C\A\B$ in the second row of $T_0$. 
If $r$ contains $\A xy\A z$, then we exchange the string $\A \A$ of $r$ in columns $1,4$ with $00$ in $T_1$. We now exchange the string $\A\B 0\A\B$ of $r''$ with the first row in $T_0$; these two rows have lower Hamming distance. If $r$ contains $\A\B x$ in columns $1,2,3$, then $x\neq \C$, by the counting function. Moreover, $x\neq 0$ since $0_{12345}=0$. Hence $x=\A$ or $\B$. Now we exchange $\A\A$ or $\B\B$ with $00$ in the first row of $T_1$ reducing the Hamming distance. If $r$ contains $\A xyz \B$, by definition or by quadratic moves we can exclude the cases $x=\A, \B, 0$, and $y=\A, \C, 0$. Hence $r$ contains $\A \C \B$, which we exchange with the first row of $T_1$, decreasing the Hamming distance. 
\end{proof}

\end{proposition}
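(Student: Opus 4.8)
The plan is to transport the structural restriction of Lemma~\ref{counting funct at most -1 in abc} to the table $T_1$ and then to exhaust a short list of configurations by Hamming-distance-decreasing moves of degree at most four. First I would fix the bookkeeping. In the normalization used throughout this section the first row of $T_1$ is the zero flow, so the counting function $f:=0_{12345}-\A_{14}-\B_{25}-\C_3$ takes the value $5$ on it, while by Lemma~\ref{counting funct at most -1 in abc} it is at most $-1$ on each of the $d$ rows of $T_0$. Hence the sum of $f$ over all rows, which is the same for $T_0$ and for $T_1$ because the two tables are compatible, is at most $-d$. If every row of $T_1$ had $f\ge-1$ this sum over $T_1$ would be at least $-d$, with equality forcing every row to contribute exactly $-1$, which is impossible since the first row contributes $5$. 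As $f$ is integer-valued, there is therefore a row $r$ of $T_1$ with $f(r)\le-2$.

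Next I would decode that inequality combinatorially. Writing $z$ for the number of zero entries among $r(1),\dots,r(5)$ and $m$ for the number of indices $i\in\{1,\dots,5\}$ at which $r$ agrees with the fixed word $\A\B\C\A\B$, one has $f(r)=z-m$, so $f(r)\le-2$ forces $m\ge z+2\ge2$. I would then split into the cases $m\in\{5,4,3,2\}$. When $m=5$ the row $r$ contains $\A\B\C\A\B$, and exchanging the block $\A\B\C$ with $000$ from the first row of $T_1$ already lowers the Hamming distance. For $m=4,3,2$ the one, two, or three discrepant entries of $r$ may occupy various positions among columns $1,\dots,5$, and for each placement I would eliminate the admissible group elements of those entries one at a time, each elimination justified by one of: a quadratic move against the first row of $T_1$ whenever $r$ would contain $\A\A$, $\B\B$, or $\C\C$; a move exhibiting a block $\A\B\C$ or $\C\A\B$ that also occurs in $T_0$; or a move involving the auxiliary rows $r'$ (containing $00\C\A\B$) and $r''$ (containing $\A\B0\A\B$) produced by Lemmas~\ref{00c implies in abc} and~\ref{ab0ab in abc}. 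Whenever the surviving possibilities force a discrepant entry of $r$ to be $0$ one contradicts $f(r)\le-2$; every remaining branch ends with a strict decrease of the Hamming distance, and since all moves used have degree at most four the proposition follows.

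I expect the genuinely laborious part to be the subcases $m=3$ and $m=2$, where one must run through the several distributions of the discrepant columns inside $\{1,\dots,5\}$ and, for each, rule out all admissible values; the individual estimates are easy but numerous. Two symmetry reductions keep the list bounded: composing the automorphism $\A\leftrightarrow\B$ with the transposition of columns $1$ and $2$ (and, symmetrically, of columns $4$ and $5$) identifies pairs of configurations, and exchanging the first two rows of $T_0$, that is, swapping the blocks $\A\B\C00$ and $00\C\A\B$, reduces the ``right-half'' placements to the ``left-half'' ones already handled.
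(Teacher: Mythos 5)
Your outline follows exactly the same approach as the paper: apply Lemma~\ref{counting funct at most -1 in abc} to obtain a row $r$ of $T_1$ with $f(r)\le -2$, translate that inequality into a constraint on the number $m$ of agreements with $\A\B\C\A\B$, and then exhaust the cases $m\in\{5,4,3,2\}$. Your supporting details are in fact sharper than the paper's: the explicit identity $f(r)=z-m$, the column-sum argument justifying the existence of $r$, and the identification of the two symmetry reductions (the automorphism $\A\leftrightarrow\B$ combined with transposing columns $1,2$, and swapping the roles of $\A\B\C 00$ and $00\C\A\B$) all match what the paper actually does.

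The gap is that for $m\in\{4,3,2\}$ you replace the case analysis itself with the assertion that each branch can be closed by one of three move types. This is where essentially all the work of the proposition resides, and your taxonomy is not actually exhaustive. For instance, in the $m=3$ branch with discrepant columns $1,3$, after the eliminations the paper arrives at $r=\C\B 0\A\B$, which it resolves by the cubic exchange of $\B 0\B$ (columns $2,3,5$) with $000$ in the first row of $T_1$ followed by replacing the first row of $T_0$ by the auxiliary row $r''=\A\B 0\A\B$; the exchanged string is $\B 0\B$, which is neither a repeated pair $\A\A,\B\B,\C\C$ nor a block $\A\B\C$ or $\C\A\B$. Similarly, in the $m=2$ branch $\A xyz\B$, the paper forces $x=\C,\;y=\B$ and then exchanges $\A\C\B$ (columns $1,2,3$) with $000$, again a cubic move outside your list. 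Moreover several branches end not by producing a forbidden pair in $T_1$ but by simultaneously repositioning $r''$ as the comparison row in $T_0$, and one must check in each such case that the resulting pair of first rows really has smaller Hamming distance. None of these checks is deep, but they are also not implied by your generic description, and a few configurations (e.g., $\C\B 0\A\B$ and $0\B\A\A\B$ in the $m=3$ case, and the three strings $\A xy\A z$, $\A\B xyz$, $\A xyz\B$ in the $m=2$ case) each need a tailor-made move. As written, the proposal states that ``every remaining branch ends with a strict decrease of the Hamming distance'' without establishing it, which is precisely the content of the proposition.
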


The preceding results of this section show the following corollary. 

\begin{corollary}\label{hamming distance at most 2}

The Hamming distance of two flows can be reduced to at most two. 

\end{corollary}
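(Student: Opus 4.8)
The plan is straightforward: Corollary \ref{hamming distance at most 2} is an immediate consequence of the preceding reduction results, organized according to the value of the Hamming distance $k$ between the two chosen flows. So I would argue by induction on $k$, reducing each case to a strictly smaller one until $k \le 2$ is reached.

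First, recall that there are no flows at Hamming distance $k = 1$: a disagreement string of length one would be a single nonzero group element, but since both rows are flows their entries sum to $0$, so the entries of the disagreement string must themselves sum to $0$, which is impossible for a single nonzero element. Hence the only cases to treat are $k \ge 3$ (reducing to $k = 2$) and, of course, $k = 2$ and $k = 0$ which need no further reduction. For $k = 4$, Corollary \ref{Reduction of four} (which itself rests on Proposition \ref{aaaaa,aaaa,aaabc,aabb,aaab}, covering all four possible disagreement strings $\A\A\A\A$, $\A\A\B\B$, $\A\A\B\C$, $\A\A\A\B$ up to the action of $\mathfrak{G}$ and $\mathfrak{S}_4$) shows that with moves of degree at most three, and touching only the four columns of the disagreement string, $T$ can be modified so that the disagreement string has length at most three. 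For $k = 3$, the only possibility for the disagreement string up to the action of $\mathfrak{G}$ and $\mathfrak{S}_3$ is $\A\B\C$ (since three nonzero elements of $\ZZ_2 \times \ZZ_2$ summing to $0$ must be the three distinct nonzero elements), and Proposition \ref{abc} shows this can be reduced.

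For $k \ge 5$, I would observe that any disagreement string of length $k$ contains, as a sub-multiset of four columns, one of the four strings handled in Proposition \ref{aaaaa,aaaa,aaabc,aabb,aaab} — more precisely, among any four of the $k$ disagreeing entries (all nonzero, after acting by $\mathfrak{G}$ to make one row's entries all $0$), the multiset of those four nonzero elements of $\ZZ_2 \times \ZZ_2$ is, up to $\textnormal{Aut}(G)$ and reordering, one of $\A\A\A\A$, $\A\A\B\B$, $\A\A\B\C$, $\A\A\A\B$. Applying Corollary \ref{Reduction of four} to those four columns reduces the number of disagreeing columns by at least one, strictly decreasing $k$, while leaving all other columns untouched. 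Iterating, we descend to $k \le 2$. Combining all these observations gives the corollary.

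The only real subtlety — and the step I would be most careful about — is bookkeeping: verifying that the reductions of Proposition \ref{aaaaa,aaaa,aaabc,aabb,aaab}, Corollary \ref{Reduction of four}, and Proposition \ref{abc} are genuinely compatible with one another and with the outer induction, i.e. that after reducing one disagreement string we have not inadvertently increased the Hamming distance of the pair $r_0, r_1$ we are tracking (the ``first rows'' fixed by the minimality convention), nor disturbed the global compatibility of $T_0$ and $T_1$. Since Corollary \ref{Reduction of four} only alters the four columns of the disagreement string and all the cited propositions are stated precisely as reductions of the Hamming distance of the relevant pair of flows, this is really just a matter of assembling the pieces; there is no new idea required beyond what the section has already established.
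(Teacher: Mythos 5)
Your proof is correct and reconstructs exactly the argument the paper leaves implicit (the paper simply states that the preceding results of the section yield the corollary, without spelling out the descent). Your case analysis — $k=1$ impossible for flows, $k=3$ forced to be $\A\B\C$ up to $\mathfrak G$, $\mathfrak S_3$, $\textnormal{Aut}(G)$ and handled by Proposition~\ref{abc}, $k=4$ by Corollary~\ref{Reduction of four}, and $k\ge 5$ by iterating the cardinality-four reduction on any four of the disagreeing columns (which strictly decreases $k$ while touching only those four columns) — is precisely the intended reading, and your observation that any four nonzero elements of $\ZZ_2\times\ZZ_2$ form, up to $\textnormal{Aut}(G)$ and reordering, one of the four multisets treated in Proposition~\ref{aaaaa,aaaa,aaabc,aabb,aaab} is exactly the detail that makes the $k\ge 5$ step rigorous.
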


\subsection{The disagreement string $\A\A$}\label{part2}

In this section, we proceed in the case of the disagreement string $\A\A$. 

\begin{equation}\label{initialtables}
T_0-T_1=\begin{bmatrix}
\A & \A & 0  &\dots  & 0\\
0 & x & \ldots  & \ldots &\ldots \\
y & 0 & \ldots  & \ldots &\ldots  \\
\ldots & \ldots & \ldots  & \ldots  &\ldots\\
\end{bmatrix}
-
\begin{bmatrix}
0 & 0 & 0  & \ldots & 0 \\
 \A & z & \ldots  & \ldots & \ldots \\
w & \A & \ldots  & \ldots & \ldots \\
\ldots & \ldots  & \ldots & \ldots & \ldots\\
\end{bmatrix}.
\end{equation}

\noindent Let us denote the row in $T_0$ starting with the string $0x$ by $r_{0x}$ and the row in $T_1$ starting with the string $\A z$ by $r_{\A z}$.  After fixing the first rows and the first two columns, we make moves of degree at most four on the rest of tables in such a way that the number of agreements in $r_{0x}$ and $r_{\A z}$ is maximized. 

\begin{remark}\label{there exists two entries that agree}

Corollary \ref{Reduction of four} ensures that, after possibly making moves of degree at most four, the rows $r_{0x}$ and $r_{\A z}$ in $T_0$ and $T_1$ respectively, agree in at least $n-5$ entries. Up to the action of $\mathfrak S_n$ on the $n$ leaves, and hence on the columns, these are the last $n-5$ columns. 

\begin{definition}\label{agreementstring}

The string in the last $n-5$ columns of the rows $r_{0x}$ and $r_{\A z}$ is the \emph{the agreement string} between $r_{0x}$ and $r_{\A z}$. Up to the action of the group of flows $\mathfrak G$, these entries are zeros. 

\end{definition}

\end{remark}

Our aim is to prove the following three crucial cases, which we refer to as the {\it main case}: \\
\begin{equation}\label{maincase}
\begin{matrix}  \tag{$\star$}
\textnormal{\noindent {\bf Case I}}: x=\B, y=\A, z=\B, w=0; \\
\textnormal{\noindent {\bf Case II}}: x=\B, y=\A, z=\B, w=\B; \\
\textnormal{\noindent {\bf Case III}}: x=\B, y=\B, z=\B, w=\B.\\
\end{matrix}
\end{equation}

\noindent In Section \ref{subsec:redto3}, we reduce any other possible case to one of the above. 

\subsubsection{Reduction to the main case}\label{subsec:redto3}

\indent Up to the action of the group of flows $\mathfrak G$, there are at least as many copies of $0$ as copies of $\A$ in the first two columns of $T_0$. Up to the action of $\textnormal{Aut}(G)$, we may assume $x=\B$. We will show that all cases can be resolved, by reducing to the main case (\ref{maincase}).\\
\indent We first collect a useful lemma which we will use to resolve easily some of the cases. 

\begin{lemma}\label{if w,z=b,c}

If in table $T_1$ in \textnormal{(\ref{initialtables})} we have $\left\{z,w\right\} = \left\{\B, \C \right\}$, then the corresponding cases can be reduced. If in table $T_0$ in \textnormal{(\ref{initialtables})} we have $\left\{x,y\right\} = \left\{\B, \C \right\}$, then the corresponding cases can be reduced.

\begin{proof}
If $\left\{z,w\right\} = \left\{\B, \C \right\}$, then in $T_1$ we have either the cubic move $00+\A \B + \C \A = \A\A + 0\B + \C 0$ or $00+\A \C + \B \A = \A\A + \B 0 + \C 0$. The second sentence is the symmetric version of the first: acting with the flow $(\A,\A,0,\ldots, 0)\in \mathfrak G$ on the tables, we produce the same tables as in the first statement. 
\end{proof}

\end{lemma}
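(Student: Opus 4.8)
The plan is to prove Lemma~\ref{if w,z=b,c} by exhibiting explicit cubic moves in $T_1$ (resp. $T_0$) that absorb the disagreement into the first row, thereby reducing the Hamming distance. First I would treat the case $\{z,w\}=\{\B,\C\}$ in table $T_1$ of \eqref{initialtables}. There are two subcases according to which of $z,w$ equals $\B$. If $z=\B$ and $w=\C$, then the relevant rows of $T_1$ are the first row $00\ldots$, the row $r_{\A z}$ starting with $\A\B$, and the row starting with $w\A=\C\A$; these three rows in columns $1,2$ carry the strings $00$, $\A\B$, $\C\A$. I would check that $\{00,\A\B,\C\A\}$ is compatible (as multisets of column entries: column $1$ has $\{0,\A,\C\}$, column $2$ has $\{0,\B,\A\}$) with $\{\A\A,0\B,\C0\}$ (column $1$: $\{\A,0,\C\}$, column $2$: $\{\A,\B,0\}$), so the cubic move $00+\A\B+\C\A=\A\A+0\B+\C0$ is legal. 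Applying it to $T_1$ puts $\A\A$ into the (former) first row, matching the first row $\A\A$ of $T_0$ in columns $1,2$, which drops the Hamming distance. If instead $w=\B$ and $z=\C$, the three rows carry $00,\A\C,\B\A$ in columns $1,2$, and the analogous cubic move is $00+\A\C+\B\A=\A\A+\B0+\C0$, again legal by the same multiset check, and again it produces a row $\A\A$ matching $T_0$.

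Next I would handle the second assertion, about $\{x,y\}=\{\B,\C\}$ in table $T_0$. Here the cleanest route, as the lemma statement indicates, is to reduce it to the first assertion by symmetry: act on the whole pair of tables with the flow $(\A,\A,0,\dots,0)\in\mathfrak G$. This flow swaps the roles of $T_0$ and $T_1$ in columns $1,2$ in the sense that the first row of \eqref{initialtables}, which is $\A\A0\cdots0$ in $T_0$ and $00\cdots0$ in $T_1$, becomes $00\cdots0$ in the transformed $T_0$ and $\A\A0\cdots0$ in the transformed $T_1$; simultaneously the entry $x$ in $r_{0x}$ (now preceded by $\A$ in column $1$) and the entry $y$ behave exactly like the entries $z,w$ did before. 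Thus $\{x,y\}=\{\B,\C\}$ in the original $T_0$ becomes precisely the hypothesis $\{z,w\}=\{\B,\C\}$ in the transformed $T_1$, and the first part of the proof applies verbatim. Since the group of flows acts on the ideal (Remark on symmetries in Section~\ref{notation}), reducibility is preserved under this action, so the original case is reduced as well.

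The only thing that really needs care is the bookkeeping of \emph{which} rows of $T_1$ (resp. $T_0$) the cubic move touches and checking that the move is genuinely a degree-$3$ move, i.e. that the two triples of column-$1,2$ strings are compatible while all other columns of the three involved rows are left untouched (the entries in columns $3,\dots,n$ of those three rows are irrelevant and unchanged, so compatibility there is automatic). I expect no real obstacle here: the whole content is the two explicit $3\times 2$ compatibility verifications and the observation that the flow $(\A,\A,0,\dots,0)$ intertwines the two statements. The one subtlety worth a sentence in the write-up is that after the cubic move the new first row agrees with the first row of the other table in columns $1,2$ but we must still have a \emph{genuine} reduction of Hamming distance between some pair of rows; this is immediate because the move only changes columns $1,2$ and there the new first row of $T_1$ now literally equals the first row of $T_0$ in those two coordinates, so its disagreement string with that row has shrunk.
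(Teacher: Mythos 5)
Your proposal is essentially the paper's proof: exhibit the two explicit cubic moves in columns $1,2$ of $T_1$ that turn the first row into $\A\A\cdots$, and reduce the $T_0$ statement to the $T_1$ statement by acting with the flow $(\A,\A,0,\ldots,0)\in\mathfrak G$. The symmetry argument is correct (indeed $\{x+\A,y+\A\}=\{\B,\C\}$ whenever $\{x,y\}=\{\B,\C\}$, and the flow interchanges the roles of $T_0$ and $T_1$).

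One small but real slip: you claim the second cubic move $00+\A\C+\B\A=\A\A+\B 0+\C 0$ is ``legal by the same multiset check,'' but it is not. Column $1$ on the left is $\{0,\A,\B\}$ while on the right it is $\{\A,\B,\C\}$; column $2$ on the left is $\{0,\C,\A\}$ while on the right it is $\{\A,0,0\}$. The move you want is $00+\A\C+\B\A=\A\A+0\C+\B 0$, which does preserve column multisets and still places $\A\A$ in the first row. (The paper's printed proof contains the same typo, so you are in good company, but since you explicitly invoke the compatibility check you should actually carry it out and fix the move.) With that correction your argument is complete and matches the paper's.
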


We now analyze all the possible cases. We refer to the tables $T_0$ and $T_1$ in (\ref{initialtables}). 

\noindent {\bf Case $y=\A$.} In this case, the table $T_0$ has the form: 
$$
T_0=\begin{bmatrix}
\A & \A & 0  &\dots  & 0\\
0 & \B & \ldots  & \ldots &\ldots \\
\A & 0 & \ldots  & \ldots &\ldots  \\
\ldots & \ldots & \ldots  & \ldots  &\ldots\\
\end{bmatrix}.
$$

\noindent We may have $z=0,\B,\C$.

\noindent {\bf $z=\B$}. \\
Here, $w=\C$ is reduced by Lemma \ref{if w,z=b,c}. Hence we have $w=0$ ({\bf Case I}) or $w=\B$ ({\bf Case II}). \\

\noindent {\bf $z=0$}. \\
Here, $w=0$ ({\bf Case X}), $w=\B$ ({\bf Case VII}), $w=\C$ ({\bf Case VI}). \\

\noindent {\bf $z=\C$}. \\
Here, $w=0$ ({\bf Case IV}), $w=\C$ ({\bf Case V}), $w=\B$ is resolved by Lemma \ref{if w,z=b,c}.\\

\noindent {\bf Case $y=\B$}. In this case, the table $T_0$ has the form: 
$$
T_0=\begin{bmatrix}
\A & \A & 0 &\dots  & 0\\
0 & \B & \ldots  & \ldots &\ldots \\
\B & 0 & \ldots & \ldots &\ldots  \\
\ldots & \ldots & \ldots & \ldots  &\ldots\\
\end{bmatrix}.
$$

\noindent We may have $z=\B,0,\C$. \\

\noindent {\bf $z=\B$}. \\
Here, $w=0$ (which is {\bf Case II} by acting with the flow $(\A,\A,0,\ldots,0)\in \mathfrak G$ and $\C\leftrightarrow \B$), $w=\B$ ({\bf Case III}), $w=\C$ resolved by Lemma \ref{if w,z=b,c}.\\

\noindent {\bf $z=0$}.\\
Here, $w=0$ ({\bf Case IX}), $w=\B$ (which is {\bf Case II} by acting with the flow $(\A,\A,0,\ldots,0)\in \mathfrak G$, transposing and $\C\leftrightarrow \B$), $w=\C$ (which is {\bf Case V} by acting the flow $(\A,\A,0,\ldots,0)\in \mathfrak G$ and transposition). \\

\noindent {\bf $z=\C$}.\\
Here, $w=0$ (which is {\bf Case V} by acting with the flow $(\A,\A,0,\ldots,0)\in \mathfrak G$), $w=\C$ ({\bf Case VIII}).\\

We now reduce all the cases to the main case (\ref{maincase}), postponing its proof for the moment, as this requires more technical results. \\

\noindent {\bf Cases IV and V.} \\
\noindent In this case we have: 
$$
T_0-T_1=\begin{bmatrix}
\A & \A & 0 &\dots  & 0\\
0 & \B & \ldots  & \ldots &\ldots \\
\A & 0 & \ldots  & \ldots &\ldots  \\
\ldots & \ldots & \ldots & \ldots  &\ldots\\
\end{bmatrix}
-
\begin{bmatrix}
0 & 0 & 0 & \ldots & 0 \\
\A & \C & \ldots & \ldots & \ldots  \\
\ldots & \ldots & \ldots & \ldots & \ldots \\
\ldots & \ldots & \ldots & \ldots & \ldots \\
\end{bmatrix}.
$$

\noindent We may assume we do not have strings $\C\C, 00, \C 0, 0\C$ in columns $1,2$ of $T_0$; this is shown by the same arguments in the proof of Lemma \ref{lem:forbidden configurations}. Hence the counting function $\A_{12}+\B_{12}-0_{12}-\C_{12}$ is nonnegative on every row of $T_0$. On the other hand, in the table $T_1$, in columns $1,2$ we do not have the string $\A\B$, as we would reduce this case with a cubic move. In the same columns of $T_1$, the string $\A\A$ would decrease the Hamming distance. Moreover, the string $\B\B$ is reduced by the cubic move $\A\C+0\A+\B\B=0\B+\B\C+\A\A$, and $\B\A$ is reduced by the cubic move $00+\A\C+\B\A=\A\A+\B 0+0\C$. This is a contradiction and thus it shows the reduction. \\

\noindent {\bf Case VI.}\\
In this case we have: 
$$
T_0-T_1=\begin{bmatrix}
\A & \A & 0 & \ldots & 0\\
0 & \B & \ldots & \ldots & \ldots \\
\A & 0 & \ldots & \ldots & \ldots  \\
\ldots & \ldots & \ldots & \ldots & \ldots \\
\end{bmatrix}
-
\begin{bmatrix}
0 & 0 & 0 & \ldots & 0 \\
 \A & 0 & \ldots & \ldots & \ldots  \\
\C & \A & \ldots & \ldots & \ldots  \\
\ldots & \ldots & \ldots & \ldots & \ldots \\
\end{bmatrix}.
$$

\noindent In columns $1,2$ in $T_1$, the string $\A\B$ is resolved by Lemma \ref{if w,z=b,c}.  The string $\A\C$ in columns $1,2$ of $T_1$ is {\bf Case V}. Since we cannot have the string $\A\A$ in columns $1,2$ of $T_1$, the counting function $\A_1-0_2$ is nonpositive in every row of $T_1$. Thus there exists a row $r$ in $T_0$ with $r(2)=0$ and $r(1)\neq \A$. Hence $r(1)=\B$. Acting by the flow $(\A,\A,0,\ldots,0)$ and transposition we reduce to {\bf Case V}. \\

\noindent {\bf Case VII.}\\
In this case we have: 

$$
T_0-T_1=\begin{bmatrix}
\A & \A & 0 & \ldots & 0\\
0 & \B & \ldots & \ldots & \ldots \\
\A & 0 & \ldots & \ldots & \ldots \\
\ldots & \ldots & \ldots & \ldots & \ldots  \\
\end{bmatrix}
-
\begin{bmatrix}
0 & 0 & 0 & \ldots & 0 \\
\A & 0 & \ldots & \ldots & \ldots \\
\B & \A & \ldots & \ldots & \ldots  \\
\ldots & \ldots & \ldots & \ldots & \ldots \\
\end{bmatrix}.
$$

\noindent We exclude the string $\A\B$ in columns $1,2$ in $T_1$, since it is {\bf Case II}. We also exclude $\A\C$ by Lemma \ref{if w,z=b,c}. As in {\bf Case VI}, there exists a row $r$ in $T_0$ such that $r(1)=\B$ and $r(2)=0$. Now, by acting with the flow $(\A,\A,0,\ldots,0)\in \mathfrak G$, making a transposition and applying the group automorphism $\C\leftrightarrow \B$, we reduce to {\bf Case II}. \\

\noindent {\bf Case VIII.} \\
In this case we have:

$$
T_0-T_1=\begin{bmatrix}
\A & \A & 0 &\dots  & 0\\
0 & \B & \ldots & \ldots & \ldots \\
\B & 0 & \ldots & \ldots & \ldots  \\
\ldots & \ldots & \ldots & \ldots & \ldots  \\
\end{bmatrix}
-
\begin{bmatrix}
0 & 0 & 0 & \ldots & 0 \\
\A & \C & \ldots & \ldots & \ldots \\
\C & \A & \ldots & \ldots & \ldots \\
\ldots & \ldots & \ldots & \ldots & \ldots \\
\end{bmatrix}.
$$
\noindent We may exclude in columns $1,2$ in $T_0$ the string $00$. Also, we exclude the string $\C\C$ by the quartic move $\A\A+0\B+\B 0 +\C\C = 00+\A\C + \C\A + \B\B$. Moreover, in columns $1,2$ in $T_0$, notice that we can exclude the strings $0\C$ and $\C 0$ by Lemma \ref{if w,z=b,c}. Hence the counting function $\A_{12}+\B_{12}-0_{12}-\C_{12}$ is nonnegative on every row of $T_0$. On the other hand, in $T_1$ we may reduce the string $\A\A$, $\A\B$ and $\B\A$ by Lemma \ref{if w,z=b,c}. Finally, we are able to reduce the string $\B\B$ by the quartic move $00+\A\C+\C\A+\B\B = \C\C+\B 0+0\B+\A\A$. This is a contradiction and thus it shows the reduction. \\

\noindent {\bf Case IX.} \\
In this case we have:
$$
T_0-T_1=\begin{bmatrix}
\A & \A & 0 &\dots  & 0\\
0 & \B & \ldots & \ldots & \ldots  \\
\B & 0 & \ldots & \ldots & \ldots  \\
\ldots & \ldots & \ldots & \ldots & \ldots  \\
\end{bmatrix}
-
\begin{bmatrix}
0 & 0 & 0 & \ldots & 0 \\
 \A & 0 & \ldots & \ldots & \ldots  \\
0 & \A & \ldots & \ldots & \ldots  \\
\ldots & \ldots & \ldots & \ldots & \ldots \\
\end{bmatrix}.
$$

\noindent Analogously to the proof of Lemma \ref{lem:forbidden configurations}, we exclude $\C\C,0\C,\C 0,00$ in columns $1,2$ of $T_0$. So the counting function $\A_{12}+\B_{12}-0_{12}-\C_{12}$ is nonnegative on every row of $T_0$. On the other hand, in columns $1,2$ of $T_1$, the strings $\A\B,\B\A$ correspond to the case for $z=\B$ and $w=0$ in tables (\ref{initialtables}), which were previously done. Thus there exists a row $r$ such that $r(1)=\B$ and $r(2)=\B$ by the positivity of the counting function in $T_0$ and $T_1$. Exchanging the string $00$ in the first row with the string $\B\B$ in $r$, acting by $\A\A$ on both $T_0$ and $T_1$, applying the automorphisms $\C \leftrightarrow \A$ and $\C \leftrightarrow \B$ we obtain {\bf Case III}.\\

\noindent {\bf Case X.} \\
In this case we have:
$$
T_0-T_1=\begin{bmatrix}
\A & \A & 0 &\dots  & 0\\
0 & \B & \ldots & \ldots & \ldots \\
\A & 0 & \ldots & \ldots & \ldots  \\
\ldots & \ldots & \ldots & \ldots & \ldots  \\
\end{bmatrix}
-
\begin{bmatrix}
0 & 0 & 0 & \ldots & 0 \\
 \A & 0 & \ldots & \ldots & \ldots\\
0 & \A & \ldots & \ldots & \ldots \\
\ldots & \ldots & \ldots & \ldots & \ldots \\
\end{bmatrix}.
$$

\noindent In $T_1$, in columns $1,2$ we can exclude $\A\B$, because it is {\bf Case I}. The string $\A\C$ reduces to {\bf Case IV}. As usual, the string $\A\A$ is excluded. Hence the counting function $\A_1 -0_2$ is nonpositive in every row of $T_1$. Hence there exists a row $r$ in $T_0$ such that $r(1)\neq \A$ and $r(2)=0$. The possible values of $r(1)$ are either $\C$ or $\B$, since for $r(1)=0$ we have an immediate reduction. For $r(1)=\C$ we apply Lemma \ref{if w,z=b,c} and $r(1)=\B$ is {\bf Case IX}. 

\subsubsection{Preliminary Lemmas}

We are now ready to present our preliminary lemmas, that are devised to tackle the main case (\ref{maincase}). As they will be used very often, we give them specific reference names in order to facilitate the reading. 

\begin{lemma}[{\bf Difference Lemma}]\label{lem:difference}

Suppose we have the table $T$ whose first three rows are $r_1,r_2,r_3$: 
\[
T = \begin{bmatrix}
q&q&\ldots\\
x&y&\ldots\\
z&x&\ldots\\
\ldots & \ldots & \ldots 
\end{bmatrix},\]

\noindent where $q,x,y,z\in G$ and $x\neq y,z$. If one of the following holds: 

\begin{enumerate}
\item[(i)] $z\neq y$ and $r_2(i)-r_3(i)$ is $x-y$ or $x-z$ for some $i>2$; or
\item[(ii)] $z=y$, $q\neq y$ and $r_2(i)-r_3(i)$ is $x-y$ or $x-q$ for some $i>2$,
\end{enumerate}
\noindent then we can transform the row $r_1$ to a row starting with the string $xx$. 
\begin{proof}
 When the difference $r_2(i)-r_3(i) = x-y$ in both (i) and (ii), we make the quadratic move $xyw+zx(w+x-y)=xx(w+x-y)+zyw$, which exchanges the corresponding 
entries in rows $r_2$ and $r_3$, thus creating a row starting with the string $xx$. Analogously for the case (i), when the difference is $r_2(i)-r_3(i)=x-z$. In (ii), when the difference is $r_2(i)-r_3(i) = x-q $, we make the cubic move $qq + xyw + yx(w+x-q) = xx + qy(w+x-q)+yqw$.
\end{proof}
\end{lemma}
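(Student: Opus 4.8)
The plan is to exhibit, in each case, a single move of degree $2$ or $3$ — supported on columns $1,2$ and the distinguished column $i$ — that produces a row with prefix $xx$. The key observation is that $r_2$ already carries $x$ in column $1$ and $r_3$ already carries $x$ in column $2$; so, working inside columns $1,2,i$ only, it suffices to transplant the column-$2$ entry of $r_3$ into $r_2$ (or, dually, the column-$1$ entry of $r_2$ into $r_3$), while paying for the induced defect by a single adjustment in column $i$. Since $G\cong\mathbb Z_2\times\mathbb Z_2$ (so $-g=g$ and $2g=0$ for all $g$), an adjustment that changes two entries of a row by equal amounts keeps the row a flow; the content of each hypothesis is precisely that it forces the equality which makes the column-$i$ multiset invariant under the swap.

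Concretely, I would first dispose of the value $r_2(i)-r_3(i)=x-y$, common to (i) and (ii). Writing the restrictions to columns $1,2,i$ as $r_2=(x,y,w)$ and $r_3=(z,x,w')$, the hypothesis forces $w'=w+x-y$, and the quadratic move $xyw+zx(w+x-y)=xx(w+x-y)+zyw$ replaces $r_2,r_3$ by two flows whose column-$1$, column-$2$ and column-$i$ multisets are unchanged, the first of which starts with $xx$. For the second value in (i), namely $r_2(i)-r_3(i)=x-z$, I would run the mirror move, transplanting the column-$1$ entry of $r_2$ instead: now the hypothesis gives $w'=w+x-z$ and the quadratic move $xyw+zx(w+x-z)=zy(w+x-z)+xxw$ does the job. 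This is exactly where $z\neq y$ is used in (i): it guarantees that $x-y$ and $x-z$ are two distinct admissible differences, so the two quadratic moves genuinely cover two cases; when $z=y$ they coincide, which is why (ii) must be treated separately.

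For (ii), with $z=y$ and $q\neq y$ (the latter ensuring $x-q$ is a new admissible difference beyond $x-y$), the value $r_2(i)-r_3(i)=x-q$ is recovered by bringing in the first row. With restrictions $r_1=(q,q,u)$, $r_2=(x,y,w)$, $r_3=(y,x,w')$ and the hypothesis $w'=w+x-q$, the cubic move $qq+xyw+yx(w+x-q)=xx+qy(w+x-q)+yqw$ — the column-$i$ entries $u,w,w'$ on the left redistributed as $u,\,w+x-q,\,w$ on the right — again produces three flows with the three column-multisets preserved, the first starting with $xx$. The only point requiring care, in all three moves, is verifying simultaneously the flow condition on each new row and the columnwise compatibility of the new (pair or triple) with the old one; once $G=\mathbb Z_2\times\mathbb Z_2$ is invoked this is a one-line check per move, and I expect this bookkeeping to be the sole (mild) obstacle. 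The standing hypotheses $x\neq y$ and $x\neq z$ only serve to discard the trivial situations in which $r_2$ or $r_3$ already begins with $xx$.
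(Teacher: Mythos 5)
Your proof is correct and matches the paper's argument essentially verbatim: the same quadratic move for the difference $x-y$, the same cubic move through the first row for $x-q$, and the explicit mirror move for $x-z$ is exactly the ``analogous'' move the paper leaves implicit.
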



\begin{remark}

Note that the Difference Lemma \ref{lem:difference} distinguishes one group element in each table in each of the crucial cases {\bf Case I}, {\bf Case II}, and {\bf Case III}. In all the cases, these are $\C$ in $T_0$ and $\B$ in $T_1$. In particular, if the second and third row differ on some index $i>2$, then their difference must be equal to the distinguished element. 

\end{remark}

Although basic, the Difference Lemma \ref{lem:difference} will be used very frequently. We apply it following the observation above. Indeed, our aim will be often to produce a row starting with a string of type $xx$ and conclude by induction. To this end, after identifying the situation described in Lemma \ref{lem:difference}, if $r_2(i)\neq r_3(i)$, then we will be able to immediately infer {\it what can be} the element $r_2(i)-r_3(i)\in G$; to exclude all the other possible values we apply the Difference Lemma \ref{lem:difference}, obtaining a row starting with the string $xx$. This will be useful to decrease the given Hamming distance and conclude by induction on the degree.

\begin{lemma}[{\bf Standard Lemma}]\label{standard argument}

Let $T$ be a table and suppose there is an element $y\in G$ in some row $r$ with $r(n-1)=0$ and $r(n)=0$. Suppose there is a row $r'$ of $T$ with $r'(n-1)=x$ and $r'(n)=x$, where $0\neq x\in G$, and a row $r''$ with the element $y+x$ in the same column as $y$. Then we can exchange $y$ and $y+x$ (and appropriate entries in columns $n-1$ and $n$). The same statement holds when $y$ is a string of elements of $G$. 
\end{lemma}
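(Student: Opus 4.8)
The Standard Lemma is essentially a bookkeeping statement about composing quadratic moves, so the plan is to exhibit the explicit sequence of moves that performs the swap of $y$ and $y+x$. I would first reduce to the case where $y$ is a single element of $G$; the general case of a string of elements then follows by iterating the argument coordinate by coordinate, since the moves constructed for one coordinate do not disturb the others outside columns $n-1$ and $n$. So fix three rows: $r$ with $r(n-1)=r(n)=0$ and $y$ in some column $j$ (with $j\neq n-1,n$), $r'$ with $r'(n-1)=r'(n)=x$ and $0\neq x$, and $r''$ with the element $y+x$ in column $j$.

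The key computation is the following. Working in columns $j, n-1, n$ and using rows $r, r', r''$, consider first the move between $r$ and $r'$ in columns $j$ and $n-1$: this is the quadratic move $y\,0 + ?\,x = y\,x + ?\,0$ (column $j$ of $r'$ is some element, call it $u$, and $u+x-x = u$ is unchanged), which transfers the $x$ from column $n-1$ of $r'$ into column $n-1$ of $r$. Wait — more carefully: I want the resulting configuration to have the $x$'s available in both columns $n-1$ and $n$ of the row carrying $y$. The clean way is to make the quadratic move between $r$ and $r'$ in columns $n-1$ and $n$ together with column $j$: exchange the pair $(0,0)$ in columns $n-1,n$ of $r$ with the pair $(x,x)$ in columns $n-1,n$ of $r'$. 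This is compatible because columns $n-1$ and $n$ are being permuted within the two rows, and column $j$ is left untouched, so it is a legitimate move of degree two (it only involves $r$ and $r'$ and columns $n-1,n$). After this move, $r$ carries $y$ in column $j$ and $x,x$ in columns $n-1,n$, while $r'$ carries $0,0$ there. Now make the quadratic move between the (modified) row $r$ and the row $r''$ in columns $j$ and $n-1$: $r$ has $y$ in column $j$ and $x$ in column $n-1$, while $r''$ has $y+x$ in column $j$ and some element $v$ in column $n-1$; the compatible replacement $y\,x + (y{+}x)\,v = (y{+}x)\,x + y\,v$ swaps $y$ and $y+x$ between these two rows. (If $y+x$ in $r''$ already sits in column $n-1$ or $n$ one adapts trivially.) Finally undo the auxiliary transfer: move the pair of $x$'s back from columns $n-1,n$ of the row now carrying $y+x$ into $r'$, restoring $r'$ and leaving columns $n-1,n$ globally unchanged except for the redistribution the lemma allows ("appropriate entries in columns $n-1$ and $n$").

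The only real subtlety — and the step I expect to need the most care — is making sure each intermediate step is genuinely a move of degree at most some fixed bound and that the "appropriate entries in columns $n-1$ and $n$" clause is honestly what one gets: the columns $n-1$ and $n$ may end up with their entries permuted among the three rows involved, but column $j$ (and all other columns) are returned to the correct multiset, in fact the correct row-by-row values except for the intended $y \leftrightarrow y+x$ swap. One should check that the two "transfer" moves and the central swap move are each compatible, i.e. preserve every column as a multiset; this is immediate since each is built from a quadratic identity of the form $a\,b + c\,d = a\,d + c\,b$ (a transposition of the second coordinate between two rows) or the explicit three-column identities above. I would state the moves in the compact $+/=$ notation used elsewhere in the paper and note that, exactly as in Theorem \ref{n->n-1 by badpairs}, a final round of quadratic adjustments in columns $n-1$ and $n$ cleans up any leftover discrepancy there, which is all the lemma claims.
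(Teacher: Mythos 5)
Your plan has a genuine gap, and it is exactly in the step you flag as "the one I expect to need the most care." The quadratic move you write in step 2, $yx + (y{+}x)v = (y{+}x)x + yv$, is not a legal move: it preserves each column as a multiset, but it does \emph{not} preserve the flow condition on the rows. After your step 1 the row carrying $y$ has $(y,x,x)$ in columns $j,n-1,n$ and sum $y$ there; your proposed replacement $(y{+}x,x)$ in columns $j,n-1$ changes its partial sum over $\{j,n-1\}$ from $y+x$ to $y$, so the reassembled row has total sum $x\neq 0$ and is no longer a flow. (Concretely, the multiset of partial sums before is $\{y+x,\,y+x+v\}$ and after is $\{y,\,y+v\}$, and these agree only if $v=x$ --- and in that degenerate case your "move" is just a row permutation and accomplishes nothing.) The same computation shows that after the transfer of $(x,x)$ into $r$, a two-column move in $j,n-1$ that genuinely swaps $y$ and $y{+}x$ exists only when $r''(n-1)=0$ (and symmetrically in $j,n$ only when $r''(n)=0$). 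Your proof therefore fails whenever neither of $r''(n-1),r''(n)$ is $0$, which is most of the time.

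What is actually needed --- and what the paper does --- is a case analysis on $(u,v):=(r''(n-1),r''(n))$. If $u=x$ or $v=x$, a single two-column swap of $(y,0)$ with $(y{+}x,x)$ between $r$ and $r''$ already works, with no preliminary transfer; indeed the transfer in your step 1 is precisely what ruins this case. If $u=0$ or $v=0$, first transfer $(x,x)$ into $r$ and then swap $(y,x)$ with $(y{+}x,0)$. If $u=v$ (both $\neq 0,x$), first transfer $(x,x)$ into $r''$, exchanging it with $(u,u)$, and then swap $(y,0)$ with $(y{+}x,x)$. Finally, if $u\neq v$ and both are $\neq 0,x$, then in $\ZZ_2\times\ZZ_2$ one has $u+v=x$, hence $u+v+x=0$, and the correct move is a direct three-column swap between $r$ and $r''$ exchanging $(y,0,0)$ with $(y{+}x,u,v)$; this preserves flows precisely because $y+0+0=y=(y{+}x)+u+v$. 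This last case is the real content of the lemma; your argument, which tries to make one uniform transfer-then-swap work, has no way to produce it, and the final appeal to "a final round of quadratic adjustments" cannot fix a move that already left the table outside the set of valid tables.
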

\begin{proof}

Let us consider the entries $r''(n-1)=u$ and $r''(n)=v$. If $u=x$ or $v=x$, then we make a quadratic move putting $y+x$ and $x$ in the row $r$. 
If $u=0$ or $v=0$, then we move the string $xx$ to the row $r$, and finally we exchange $x$ with $0$ and $y$ with $y+x$. If $u=v$ are equal, then we move the string $xx$ in the row $r''$ exchanging it with the string $uv$, thus we exchange $y$ with $y+x$ and $0$ with $x$. Hence, we may assume that $u\neq v$ and they are both different from $0$ and $x$. Hence, the sum $u+v+x=0$. Thus, we may exchange $y$ with $y+x$ and $00$ with $uv$. The last statement is shown using the same arguments. This completes the proof.
 \end{proof}

We now record some technical results on the main case (\ref{maincase}). Note that in the main case we have $x=z=\B$. 

\begin{lemma}\label{lem:agreeon0}
If $r_{0\B}(i)=r_{\A \B}(i)$ for some $i>2$ then we may assume that both are equal to $0\in G$. In particular, both rows have $0$ on the agreement string.
\end{lemma}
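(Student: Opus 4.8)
We are in the main case, so the first three rows of $T_0$ are of the form $(\A\,\A\,0\dots0)$, $(0\,\B\,\dots)$, $(\A\,0\,\dots)$ or $(\B\,0\,\dots)$, and those of $T_1$ are $(0\,0\,0\dots0)$, $(\A\,\B\,\dots)$, $(w\,\A\,\dots)$. The distinguished element is $\C$ in $T_0$ and $\B$ in $T_1$ (by the Remark following the Difference Lemma). Write $r_{0\B}$ for the second row of $T_0$ and $r_{\A\B}$ for the second row of $T_1$, and recall that by the maximality assumption on their agreement string these rows already agree in as many positions $i>2$ as possible; the agreement string (columns beyond the first five, by Remark~\ref{there exists two entries that agree}) is identically $0$ after acting by $\mathfrak G$. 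The claim is that \emph{every} coordinate $i>2$ on which these two rows agree can in fact be taken to carry the value $0$.

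\textbf{Key steps.} Suppose $r_{0\B}(i)=r_{\A\B}(i)=g$ for some $i>2$ with $g\neq 0$; I want to derive a contradiction with maximality or else directly reduce the Hamming distance. The idea is to play $g$ off against the distinguished elements. First I would compare $r_{0\B}$ with the first row of $T_1$ (all zeros) and the row $r_{\A\B}$: since $r_{0\B}$ starts with $0\B$ in columns $1,2$ and has value $g\neq0$ in column $i$, a quadratic move between $r_{0\B}$ and the all-zero first row of $T_0$ (respectively $T_1$) is available precisely when there is a second nonzero coordinate aligned suitably; the point is to show that the presence of $g$ forces such a move or forces $g$ to equal the distinguished element. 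Concretely: if $g\neq\B$ in $T_0$ one does a quadratic move $0\,g + \A\,\A \to \A\,g + 0\,\A$-type swap (using columns $2$ and $i$, together with the all-zero first row), which decreases the Hamming distance between the first rows — contradiction — unless $g$ happens to be the distinguished value $\C$; and if $g=\C$, one uses the cubic move available because $r_{0\B}$ contains the string $0\,\B\,\C$ in columns $1,2,i$, which can be exchanged against $00\dots$ and an appropriate third row, again lowering the distance. The symmetric argument in $T_1$, using that $r_{\A\B}$ contains $\A\,\B\,g$ in columns $1,2,i$ and that $\B$ is distinguished there, handles the remaining subcase. In each branch the Standard Lemma \ref{standard argument} (applied in columns $n-1,n$, which lie in the agreement string and are $0$ there) lets me legally carry the swap through the rest of the table.

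\textbf{Main obstacle.} The delicate point is the bookkeeping of \emph{which} auxiliary row is used to absorb the group-element swap: the Difference Lemma and Standard Lemma both need an auxiliary row whose relevant entry differs from the one being moved in a controlled way, and one must check that in the main case such a row always exists — either the first (all-zero) row of the opposite table, or the third row $(w\,\A\,\dots)$ / $(\A\,0\,\dots)$, or some row guaranteed by a counting function. I expect the cleanest route is: after acting by $\mathfrak G$ so that the agreement string is zero, run a counting function in column $i$ (e.g.\ $0_i-\B_i$ or $0_i-g_i$ suitably normalized) to force the existence of a row with the complementary value, then feed that row into the Standard Lemma. The one genuinely case-dependent subtlety is that in {\bf Case I} ($w=0$) the third row of $T_1$ is $(0\,\A\,\dots)$, which already has a $0$ in column $1$, so the quadratic moves there are slightly different from {\bf Case II} and {\bf Case III} ($w=\B$); I would note this and check the two variants separately, but the reduction mechanism — produce a row starting $xx$ via Lemma~\ref{lem:difference}, or lower the Hamming distance directly — is identical in all three.
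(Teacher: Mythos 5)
Your proposal has several genuine problems, and the case structure you propose does not match what actually happens.

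First, you have misidentified which value of $g$ is the delicate one. You treat $g=\C$ (the element "distinguished" for $T_0$ in the Difference-Lemma remark) as the hard case, handled by a cubic, and $g=\A$ as the easy case, handled by a quadratic. The paper's proof is the opposite: $g=\B$ and $g=\C$ are both dispatched by quadratic moves (producing the common flow $0\B\B 0\cdots$ resp.\ $\A\B\C 0\cdots$ in both tables), while $g=\A$ is the one requiring a cubic move in $T_0$ in {\bf Cases I} and {\bf II} (the move $\A\A0+0\B\A+\A 0=\A 0 \A+\A\B 0+0\A$) and two quadratics in {\bf Case III}. Your heuristic "play $g$ off the distinguished element'' misleads you here because the Difference Lemma distinguishes a difference of rows \emph{inside the same table}, whereas in this lemma you are given an \emph{equality} of entries of $r_{0\B}\in T_0$ and $r_{\A\B}\in T_1$ across tables. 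There is no reason the distinguished element for $T_0$ should be the bad value of $g$.

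Second, the concrete move you write down is not legal. You propose, in $T_0$, a quadratic swap "$0\,g + \A\,\A \to \A\,g + 0\,\A$, using columns $2$ and $i$, together with the all-zero first row.'' But the all-zero row is the first row of $T_1$, not $T_0$; the first row of $T_0$ is $\A\A 0\cdots 0$. Moreover, in columns $2$ and $i$ the first row of $T_0$ reads $\A\,0$ (not $\A\A$), and $r_{0\B}$ reads $\B\,g$ (not $0\,g$). Neither reading of your notation produces two pairs of flows, so this is not a move in the sense of the paper. Likewise, your cubic for $g=\C$ says to exchange $0\,\B\,\C$ "against $00\dots$ and an appropriate third row," again pulling in the all-zero row which lives in the other table. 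The invocation of the Standard Lemma in columns $n-1,n$ has no role here: that lemma is a tool for eliminating bad pairs in the last two columns, not for absorbing a single-column swap, and in any case the agreement string is already identically $0$ there, so there is nothing to carry through. The correct argument, as in the paper, is a direct three-way case analysis on $g\in\{\A,\B,\C\}$ where in each case one explicitly exhibits a common flow (e.g.\ $0\B\B 0\cdots$, $\A\B\C 0\cdots$, $\A0\A0\cdots$) in both tables via moves of degree at most three, thereby reducing the degree of the binomial.
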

\begin{proof}
Without loss of generality, let us assume $i=3$. If $r_{0\B}(i)=r_{\A \B}(i)=\B$, then a quadratic move allows us to produce the string $0\B\B 0$ in both tables. If $r_{0\B}(i)=r_{\A \B}(i)=\C$, then in both tables we obtain the string $\A\B\C 0$ by quadratic moves again. If $r_{0\B}(i)=r_{\A \B}(i)=\A$, then in both tables we obtain $\A 0\A 0$. The last string is obtained in $T_1$ by quadratic moves, and in $T_0$ by the following moves:
\begin{enumerate}
\item[(i)] in {\bf Case I} and {\bf II}, by the cubic move $\A\A0+0\B\A+\A 0=\A 0 \A+\A\B 0+0\A$;
\item[(ii)] in {\bf Case III}, by two quadratic moves, upon exchanging $0\B$ with $\B 0$. 
\end{enumerate}
\end{proof}

\begin{remark}\label{rem:symmetry}

We observe that in {\bf Case I} and {\bf Case III}, the tables $T_0$ and $T_1$ are in ``symmetry''. More precisely, the fixed entries in table $T_1$ can be obtained from the ones in $T_0$, by acting with the flow $(\A,\A, 0,\dots,0)\in \mathfrak G$ and applying the automorphism $\B\leftrightarrow\C$ of $G$, that exchanges $\B$ and $\C$. In particular, if we can prove a statement for $T_0$ then a ``symmetric'' statement holds for $T_1$.

\end{remark}

\begin{lemma}\label{lem:forbidden configurations}
We may assume that no row in $T_0$ contains in columns $1,2$ any string of the form $\C\C,0\C,\C 0,00$. Analogously, no row in $T_1$ contains in columns $1,2$ any of the strings of the form $\C\C,\A\C,\C\A,\A\A$.
\begin{proof}
In all the cases, one can obtain either $00$ in $T_0$ or $\A\A$ in $T_1$. For $T_0$, these are: $\A\A+0\B+\A 0+0\C=00+\A\C+0\A  +\A \B$, $\A\A+0\B+\C0=00+\C\A+\A\B$, $\A\A+0\B+\B 0+\C\C=00+\C\A+\A\C+\B\B$. The statement for $T_1$ readily follows by Remark \ref{rem:symmetry}.
\end{proof}
\end{lemma}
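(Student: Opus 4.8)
The plan is to prove Lemma \ref{lem:forbidden configurations} by exhibiting, in each of the three crucial cases {\bf I}, {\bf II}, {\bf III}, an explicit sequence of moves of degree at most four that eliminates a row having one of the forbidden strings in columns $1,2$. The idea is the same for every forbidden configuration: a row in $T_0$ that has a ``too small'' value of the counting function $\A_{12}+\B_{12}-0_{12}-\C_{12}$ (i.e.\ contains $00$, $0\C$, $\C 0$, or $\C\C$ in columns $1,2$) can be combined, via a single move, with the distinguished rows of $T_0$ (the first row $\A\A\,0\cdots$ and the rows $r_{0x}$, $r_{yz}$ whose columns $1,2$ are $0x$, $y0$ respectively) to create a row which shares two entries with the first row of $T_0$; exchanging those two entries then strictly decreases the Hamming distance, contradicting the minimality assumption. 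So ``we may assume'' no such row exists.

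Concretely, I would first record that in the main case the top three rows of $T_0$ have columns $1,2$ equal to
\[
\A\A,\qquad 0\B,\qquad y0,
\]
with $y\in\{\A,\B\}$. Then for each forbidden string I write down the move. For a row containing $\C 0$ in columns $1,2$: combine the first row $\A\A$, the row $0\B$, and the row $\C 0$ by the cubic move $\A\A+0\B+\C 0 = 00+\C\A+\A\B$, producing a row $00$ in $T_0$; now $00$ agrees with the first row in columns $1,2$, so it reduces the Hamming distance. For $00$: it already agrees with the first row of $T_1$ but disagrees with columns $1,2$ of the first row of $T_0$ in \emph{two} places, so a quadratic move between this row and the first row of $T_0$ reduces the distance directly. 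For $0\C$ and $\C\C$ one uses the longer moves already displayed in the statement, namely
\[
\A\A+0\B+\A 0+0\C=00+\A\C+0\A+\A\B
\]
(valid when $y=\A$, i.e.\ in Cases I and II) and
\[
\A\A+0\B+\B 0+\C\C=00+\C\A+\A\C+\B\B
\]
(valid when $y=\B$, i.e.\ in Case III); in the remaining subcases one first normalizes $y$ using a quadratic move or, when $y=\B$ in Cases I/II, exchanges $0\B$ with $\B 0$ and proceeds. In each instance the right-hand side produces a $00$ in columns $1,2$ of $T_0$, whence the Hamming distance drops.

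Finally, the statement for $T_1$ follows from the statement for $T_0$ by the symmetry recorded in Remark \ref{rem:symmetry}: acting with the flow $(\A,\A,0,\dots,0)\in\mathfrak G$ turns the first row $\A\A\,0\cdots$ of $T_0$ into the first row $00\,0\cdots$ of $T_1$ and sends the forbidden strings $\C\C,0\C,\C 0,00$ in $T_0$ to $\C\C,\A\C,\C\A,\A\A$ in $T_1$, and applying $\B\leftrightarrow\C$ keeps us inside Cases I and III; Case II must be checked on its own but the moves above already cover it since they only used $y=\A$. The main obstacle is purely bookkeeping: making sure that in every one of the three cases and every one of the four forbidden strings the move one writes down is genuinely degree $\le 4$, genuinely uses rows guaranteed to be present, and genuinely produces a configuration that lowers the Hamming distance; once the correct moves are selected (as displayed in the lemma's statement) the verification is a short direct check.
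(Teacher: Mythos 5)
Your proposal takes the same approach as the paper: exhibit, in each main case, a degree-$\le 4$ move that produces a row with $00$ in columns $1,2$ of $T_0$ (resp.\ $\A\A$ in $T_1$), which after a quadratic swap with the first row produces a common row of $T_0$ and $T_1$; and pass to $T_1$ via Remark~\ref{rem:symmetry}. The three moves you display are exactly the paper's, and you correctly supply the (implicit) quadratic reduction for the string $00$.

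You go a bit further than the paper in flagging that the displayed move for $0\C$ presupposes a row $\A 0$ (so it is tied to Cases I and II), while the displayed move for $\C\C$ presupposes a row $\B 0$ (Case III), and that Remark~\ref{rem:symmetry} does not cover Case II. These observations are accurate and in fact sharper than what the paper makes explicit. However, your proposed remedy, ``normalize $y$ using a quadratic move or, when $y=\B$ in Cases I/II, exchange $0\B$ with $\B 0$,'' is garbled: $y$ is fixed by the case ($y=\A$ in I and II, $y=\B$ in III), so there is nothing to normalize, and one cannot conjure an $\A0$ row in Case III out of $0\B$. The correct fix is simply to write down the alternative moves in the missing subcases, e.g.\ for $\C\C$ in Cases I/II, $\A\A+0\B+\A 0+\C\C=00+\A\C+\A\B+\C\A$, and for $0\C$ in Case III, the quadratic move $\B 0+0\C=00+\B\C$; similarly one directly verifies the $T_1$ moves for Case II (your instinct that they exist is right, but it is not a consequence of ``only using $y=\A$''). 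Since the paper itself elides these verifications, this is a rough edge rather than a genuine gap, but the stated reasoning in that sentence as written is not correct.
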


\begin{lemma}\label{lem:differ not by c}
For any row $r_{xy}$ in $T_0$ differing from $r_{0\B}$ on some column index $i>2$ \textnormal{not} by $\C$, we may assume $xy=\A\A,\B\B,\A\B$ or $\B\A$. Analogously, in $T_1$, if $r_{xy}$ differs from $r_{\A\B}$ on some column index $i>2$ \textnormal{not} by $\B$, then $xy=00,\B\B,\B 0$ or $0\B$.

\begin{proof}
In {\bf Case I} and {\bf Case II}, by the Difference Lemma \ref{lem:difference}, and a quadratic move with $r_{0\B}$ or with $r_{\A 0}$ in $T_0$, we may assume $x+y=0$ or $x+y=\C$. The result follows by Lemma \ref{lem:forbidden configurations}.\\
\indent In {\bf Case III} we exclude $x+y=\A$. Indeed, if $xy=\A 0$ or $xy=0\A$, then we are in {\bf Case II} (more precisely, for $0\A$ we also need to exchange the two columns to reduce to {\bf Case II}). If $xy=\B\C$ or $\C\B$, by the quadratic moves $0\B w+\B\C (w+\A)=0\C (w+\A)+\B\B w$ or $0\B w+\C\B (w+\B)=\B\B (w+\B)+\C 0 w$ we produce $0\C$ or $\C 0$ and apply Lemma \ref{if w,z=b,c}. Remark \ref{rem:symmetry} gives the symmetric statement for $T_1$. 
 \end{proof}
\end{lemma}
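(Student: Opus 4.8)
The plan is to reduce the statement to a constraint on the sum $x+y\in G$ and then to invoke Lemma~\ref{lem:forbidden configurations}. The key step is: after moves of degree at most four one may assume $x+y\in\{0,\C\}$, i.e.\ $x+y$ equals $0$ or the element that the Difference Lemma distinguishes in $T_0$. Granting this, the strings with $x+y=0$ are $00,\A\A,\B\B,\C\C$ and those with $x+y=\C$ are $0\C,\C0,\A\B,\B\A$; in each list the two strings containing a $0$ or a $\C$ are forbidden by Lemma~\ref{lem:forbidden configurations}, so only $\A\A,\B\B,\A\B,\B\A$ remain, which is the assertion for $T_0$. The assertion for $T_1$ follows: in {\bf Case I} and {\bf Case III} by Remark~\ref{rem:symmetry} (the flow $(\A,\A,0,\dots,0)$ composed with the automorphism $\B\leftrightarrow\C$ carries $T_0$ to $T_1$ and $\C$ to $\B$), and in {\bf Case II} by running the same argument with the roles of $T_0$ and $T_1$ interchanged.

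Thus all that must be shown is that $x+y\in\{\A,\B\}$ forces a reduction of the Hamming distance. Suppose $r_{xy}$ disagrees with $r_{0\B}$ at some $i>2$ with $r_{xy}(i)-r_{0\B}(i)=g\in\{\A,\B\}$. The engine is the Difference Lemma~\ref{lem:difference} applied to the triple consisting of the fixed first row $(\A,\A,0,\dots,0)$ (in the role ``$q,q$''), the row $r_{0\B}$ (in the role ``$x,y,\dots$'', so that the lemma's ``$x$'' is $0$), and a row having $0$ in column $2$ (in the role ``$z,x,\dots$''): its output is a row of $T_0$ starting with $00$, and since the first row of $T_1$ is zero on columns $1,2$ this strictly decreases the Hamming distance, a contradiction. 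When $r_{xy}$ itself has $0$ in column $2$ — so $x\in\{\A,\B\}$ after discarding the forbidden $\C0$ — this triggers at once: one uses clause (i) or clause (ii) of Lemma~\ref{lem:difference} according as the column-$1$ entry of $r_{xy}$ is $\ne\B$ or $=\B$, and the hypothesis $g\in\{\A,\B\}$ is exactly what makes the relevant difference equal ``$x-y$'' or ``$x-z$'' (resp.\ ``$x-q$''). The string $0\B$ (a second row starting with $0\B$) triggers similarly with $r_{\A 0}$ (resp.\ $r_{\B 0}$ in {\bf Case III}) in the ``$z,x,\dots$'' slot. For the remaining strings one first applies a short sequence of moves against $r_{0\B}$ or $r_{\A 0}$ — the legitimacy of the first such move is precisely where $g\ne\C$ is used — converting $r_{xy}$ either into a row with $0$ in column $2$, or into a row carrying $0\C$ or $\C 0$ on columns $1,2$ (excluded by Lemma~\ref{lem:forbidden configurations}), or — for $\A 0$ and $0\A$ — into the configuration of {\bf Case II} of (\ref{maincase}).

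{\bf Case III} is run identically, with two cosmetic changes: the auxiliary ``$z,x,\dots$'' row is $r_{\B 0}$ rather than $r_{\A 0}$, so it is clause (ii) of the Difference Lemma that does the work (the ``$z=y$'' situation), and the strings $\B\C$, $\C\B$ are best disposed of by hand, via the quadratic move $0\B w+\B\C(w+\A)=0\C(w+\A)+\B\B w$ and its mirror image, which create a row carrying $0\C$ or $\C 0$ on columns $1,2$ and so bring us under Lemma~\ref{if w,z=b,c}.

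I expect the main difficulty to be bookkeeping rather than any new idea: over a dozen string-cases and both tables one must keep straight which rows occupy the three slots of the Difference Lemma and check, case by case, that the auxiliary move is admissible — which hinges delicately on the disagreement of $r_{xy}$ with $r_{0\B}$ at column $i$ being $\A$ or $\B$ and not $\C$. The two genuinely delicate points are the borderline strings $0\B$, $\B 0$, which coincide with rows already named in the setup, and the ``$z=y$'' versus ``$z\ne y$'' dichotomy that forces the split between {\bf Case III} and {\bf Cases I}, {\bf II}.
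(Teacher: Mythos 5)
Your proposal is correct and is essentially the paper's own argument: the Difference Lemma (fed with $r_{0\B}$ together with $r_{\A 0}$, resp.\ $r_{\B 0}$) plus short auxiliary moves force $x+y\in\{0,\C\}$, Lemma \ref{lem:forbidden configurations} then leaves only $\A\A,\B\B,\A\B,\B\A$, the exceptional Case III strings are sent to Case II or to $0\C/\C 0$ via the quoted quadratic moves and Lemma \ref{if w,z=b,c}, and the $T_1$ statement follows by symmetry. Your added observation that Remark \ref{rem:symmetry} only covers Cases I and III, so that in Case II the $T_1$ statement must be obtained by rerunning the argument with the roles of the two tables exchanged, is a point the paper glosses over and you treat correctly.
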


\begin{lemma}\label{lem:b->0}

If there exists and index $j$ such that $r_{0\B}(j)=\B$ in $T_0$, then we may assume that $r_{\A\B}(j)=0$ in  $T_1$. Analogously, if there exists an index $j$ such that $r_{\A \B}(j)=\C$ in $T_1$, then we may assume that $r_{0\B}(j)=0$. 

\end{lemma}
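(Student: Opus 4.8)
The plan is to prove the first statement and then deduce the second: in {\bf Case I} and {\bf Case III} it is the image of the first under the symmetry of Remark \ref{rem:symmetry} — which swaps $T_0,T_1$ and the elements $\B,\C$, hence turns ``$r_{0\B}(j)=\B$ in $T_0$'' into ``$r_{\A\B}(j)=\C$ in $T_1$'' — while in {\bf Case II}, where that symmetry is not at hand, the mirror argument is carried out directly. So assume $r_{0\B}(j)=\B$ in $T_0$ for some index $j$ (necessarily $j>2$). If $r_{\A\B}(j)=0$ there is nothing to prove, and if $r_{\A\B}(j)=\B$ then $r_{0\B}(j)=r_{\A\B}(j)$, so Lemma \ref{lem:agreeon0} lets us replace both entries by $0$; thus we may assume $r_{\A\B}(j)\in\{\A,\C\}$.

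Since $r_{0\B}(j)=\B$, compatibility forces column $j$ of $T_1$ to contain a $\B$, so there is a row $r$ of $T_1$ with $r(j)=\B$, and $r\neq r_{\A\B}$ because $r_{\A\B}(j)\neq\B$. The difference $r(j)-r_{\A\B}(j)\in\{\C,\A\}$ is not the distinguished element $\B$ of $T_1$, so by Lemma \ref{lem:differ not by c} the columns-$1,2$ string of $r$ is one of $00,\B\B,\B 0,0\B$. I would then examine these four configurations. For each, a move of degree at most four involving $r$, $r_{\A\B}$, and — when column $1$ does not by itself supply a flow-compatible transposition — either the first row of $T_1$ (which is all zeros and so survives a suitable cubic move in columns $1,2,j$) or a row of $T_0$ carrying $\C$ in column $j$ (such a row exists, again by compatibility, and Lemma \ref{lem:differ not by c} constrains its columns-$1,2$ string to $\A\A,\B\B,\A\B$ or $\B\A$) should either produce $r_{\A\B}(j)=\B$, whereupon Lemma \ref{lem:agreeon0} applies, or produce a configuration with strictly smaller Hamming distance, completing the induction.

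I expect the main obstacle to be exactly this last bookkeeping: a bare transposition in column $j$ breaks the flow condition, so one is forced into degree-three and degree-four moves, and must verify, configuration by configuration, that the required auxiliary row actually exists — this is where compatibility and Lemma \ref{lem:differ not by c} get invoked a second time, now inside $T_0$ — and that performing the move leaves the two fixed first rows, together with the disagreement string $\A\A$ in columns $1,2$, intact. The separate argument for the second statement in {\bf Case II}, which cannot be delegated to Remark \ref{rem:symmetry}, is an additional but routine complication.
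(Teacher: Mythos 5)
Your opening steps match the paper: reduce to $r_{\A\B}(j)\in\{\A,\C\}$ via Lemma \ref{lem:agreeon0}, use compatibility to get a row $r$ of $T_1$ with $r(j)=\B$, and apply Lemma \ref{lem:differ not by c} to pin $r(1,2)$ down to $\{00,\B\B,\B 0,0\B\}$. But from there the proposal diverges from the paper's argument and, more importantly, has a genuine gap.

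The paper does not simply enumerate these four configurations and make ad hoc moves. It first eliminates $r(2)=\B$ (this produces $0\B\B$ in both tables), forcing $r(2)=0$, and then uses the counting function $\B_3-0_2$: since every row of $T_1$ with $\B$ in column $3$ has $0$ in column $2$, this function is nonpositive on $T_1$, hence on $T_0$, and since $r_{0\B}$ contributes $+1$, there must be a row $r'$ of $T_0$ with $r'(2)=0$ and $r'(3)\neq\B$. By Lemma \ref{lem:differ not by c} this forces $r'(3)=\A$. This row $r'$ is the engine of the rest of the proof, and it does not come from compatibility. Your proposal has nothing corresponding to this counting-function step.

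In its place you invoke ``a row of $T_0$ carrying $\C$ in column $j$ (such a row exists, again by compatibility).'' That claim is false in general: compatibility only forces column $j$ of $T_0$ to contain $0$, $\B$, and $r_{\A\B}(j)$. When $r_{\A\B}(j)=\A$ there need be no $\C$ at all in column $j$ of $T_0$. The auxiliary row the paper actually needs has $\A$ (not $\C$) in column $j$, and even then its existence requires the counting argument, not bare compatibility. Your claim that Lemma \ref{lem:differ not by c} then constrains this (nonexistent) row's first two entries to $\A\A,\B\B,\A\B,\B\A$ inherits the same problem. Finally, the closing ``should either produce $r_{\A\B}(j)=\B$ \ldots or produce a configuration with strictly smaller Hamming distance'' is an assertion rather than an argument: the hard subcase $r_{\A\B}(j)=\C$ requires first showing $r(1)=\B$, then locating a row $r''$ of $T_1$ with $r''(3)=\A$, and then a separate explicit cubic or quartic move for each value of $r''(2)$ (for instance $000+\A\B\C+\B 0\B=\A 0\A+\B\B 0+0\B\B+0\C$). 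None of this is visible in, or implied by, the four-configuration enumeration you describe, so the proposal as written does not constitute a proof.
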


\begin{proof}

Assume $r_{0\B}(3)=\B$. Suppose $r_{\A\B}(3)= \A$ or $\C$ in $T_1$. Then there exists a row $r$ in $T_1$ with $r(3)=\B$. The row $r$ contains the string $xy\B$ in columns $1,2,3$ for some $x,y\in G$. Let us determine the possible values of $r(2)=y$. If $y=\B$ we would have the string $0\B\B$ in both of the tables. 
By Lemma \ref{lem:differ not by c}, $y=0$. Whence the counting function $\B_3 -0_2$ is nonpositive on every row of $T_1$. It follows that in $T_0$ there exists a row $r'$ with $r'(2)=0$ and $r'(3)\neq \B$. By Lemma \ref{lem:differ not by c}, we have $r'(3)=\A$. For $r_{\A\B}(3)= \A$, by quadratic moves, we obtain $\A 0 \A 0$ in both tables. Now consider the case $r_{\A\B}(3)=\C$. In $T_1$ for every row $r$ with $r(2)=0$ and $r(3)=\B$ (likewise the $r$ above), we have $r(1)=\B$. Indeed $r''(1)$ is either $0$ or $\B$ by Lemma \ref{lem:differ not by c}. On the other hand, $r''(1)\neq 0$ because otherwise we would produce the string $0\B\B$ in $T_1$, which is also in $T_0$. Hence $r''(1)=\B$. Since in $T_0$ we have the row $r'$ with $r'(3)=\A$, there exists a row $r''$ in $T_1$ with $r''(3)=\A$. If $r''(2)=0$ we are done, as we produce $\A 0\A$ in both tables. For $r''(2)=\A$ we have the cubic move in $T_1$, $\A\B\C + \B 0\B+\A\A = \A 0\A +\B\A\C + \B\B$. For $r''(2)=\B$, we have the quartic move in $T_1$,
$000+\A\B\C+\B 0\B = \A 0\A + \B\B 0 +0\B\B+ 0\C$. For $r''(2)=\C$, we have the quartic move in $T_1$, $000+\A\B\C + \B 0\B+\C\A = \A 0\A+\B\B 0+0\C\C + 0\B$. 
\end{proof}

\subsubsection{The case of $n=6$ leaves}\label{part3}

After having set up the cornerstone of our approach, we are ready to first establish the case of $n=6$ leaves. Let $P$ be the lattice polytope of the Kimura $3$-parameter model for $n=6$ leaves. Here we are in the setting of polytopes. To be consistent with standard terminology, binomials in the ideal of the Kimura $3$-parameter model are identified with relations among lattice points, which in turn are naturally identified with variables. The minimal generating relations among the vertices of the polytope $P$ constitute a {\it Markov basis}. The degree of an element of a Markov basis is the total degree of the corresponding binomial in the standard grading. The degree of the corresponding table is the number of rows. Only in this section, given a Markov basis element $B$, which we think of as a binomial, we introduce the notation $\deg(B)$ to denote its degree. \\
\indent As recalled in Section \ref{notation}, the polytope $P$ is $18$ dimensional. Following the notation of Section \ref{notation}, a generating set of the full lattice $M^6$ is $e_{(i,g)}\in [6]\times G$. However, our lattice is a sublattice of $M^6$. Since we have the six linear relations $e_{(i,0)}^*+e_{(i,\A)}^*+e_{(i,\B)}^*+e_{(i,\C)}^*=1$ for $1\leq i\leq 6$ satisfied by the vertices of the polytope, we can choose the elements $e_{(i,\A)}, e_{(i,\B)}, e_{(i,\C)}$ for $1\leq i \leq 6$ to serve as a basis of the $18$-dimensional lattice of interest. 

\begin{proposition}\label{prop:n6}

The polytope $P$ defines an $18$ dimensional projectively normal (in particular, Cohen-Macaulay) toric variety in $\PP^{1023}$. Its Hilbert series is 
$Hs(t) = \frac{N(t)}{(1-t)^{19}}$, where 
$$
\begin{small}
\begin{matrix}
N(t) =  t^{15} + 1005t^{14}+230763t^{13}+11423223t^{12}\\
          +197336781t^{11} +1476133641t^{10} + 5369113631t^{9}\\
          +10097960379t^{8}+10077653595t^{7} +5323111487t^6 \\
          +1442513865t^5+187603341t^4+10384023t^3+198795t^{2}+1005t+1.\\
\end{matrix}
\end{small}
$$
\noindent Its Hilbert polynomial is
$$
\begin{small}
\begin{matrix}
H(t) = \frac{22261501}{4168212048000}t^{18} +\frac{799045380}{4168212048000}t^{17}+\frac{13381457673}{4168212048000}t^{16}+\frac{138721353336}{4168212048000}t^{15} \\
\end{matrix}
\end{small}
$$
$$
\begin{small}
\begin{matrix}
+\frac{995839168812}{4168212048000}t^{14}+\frac{5247736051320}{4168212048000}t^{13}+\frac{21011354421226}{4168212048000}t^{12}+\frac{65366574541632}{4168212048000}t^{11}\\
\end{matrix}
\end{small}
$$
$$
\begin{small}
\begin{matrix}
+\frac{160636901283573}{4168212048000}t^{10}+\frac{316408365264420}{4168212048000}t^9+\frac{507035368484229}{4168212048000}t^{8}+\frac{671227146881928}{4168212048000}t^{7}\\
\end{matrix}
\end{small}
$$
$$
\begin{small}
\begin{matrix}
+\frac{744003206327314}{4168212048000}t^{6}+\frac{695859081785280}{4168212048000}t^{5}+\frac{545170528162872}{4168212048000}t^{4}+\frac{340981469563104}{4168212048000}t^{3}\\
\end{matrix}
\end{small}
$$
$$
\begin{small}
\begin{matrix}
+\frac{151089754960800}{4168212048000}t^{2}+\frac{38894674089600}{4168212048000}t+1.\\
\end{matrix}
\end{small}
$$
\noindent In particular, the Markov basis has elements of degree at most $16$. \\
\indent Let us consider the following two codimension two faces of $P$:
\begin{enumerate}
\item [(i)] $\tilde P$ contains points corresponding to flows that have $0$ or $\A$ on the sixth leaf. This is the intersection of $P$ with the linear subspace $e_{(6,\B)}^*=e_{(6,\C)}^*=0$.
\item[(ii)] $\tilde P'$ contains points corresponding to flows that do not have $\C$ on the sixth leaf and on the fifth leaf. This is the intersection of $P$ with the linear subspace $e_{(5,\C)}^*=e_{(6,\C)}^*=0$.
\end{enumerate}

\noindent The Hilbert series of (i) is $Hs(t) = \frac{\tilde N(t)}{(1-t)^{17}}$, where

$$
\begin{small}
\begin{matrix}
\tilde N(t) = t^{13}+1007t^{12}+107752t^{11}++2813176t^{10}\\
          +26622909t^9+109147219t^8+211160560t^7+ 199302992t^6 \\
          +91202787t^5+19336749t^4+1724040t^3+54360t^2+495t+1.\\
\end{matrix}
\end{small}
$$
\\
The Hilbert series of (ii) is $Hs(t) = \frac{\tilde N'(t)}{(1-t)^{17}}$, where
$$
\begin{small}
\begin{matrix}
\tilde N'(t) = 3t^{13}+2253t^{12}+211288t^{11}++5060488t^{10}\\
          +44891401t^9+174437831t^8+321990512t^7+ 291183248t^6 \\
          +127959653t^5+26052683t^4+2223560t^3+66520t^2+559t+1.\\
\end{matrix}
\end{small}
$$
\noindent In particular, the Markov basis in both cases has elements of degree at most $14$.

\end{proposition}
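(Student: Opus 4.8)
The plan is to reduce every assertion in the statement to a finite computation that is within reach of \texttt{Normaliz} and \texttt{4ti2}, using the $18$-dimensional description of the lattice from the paragraph preceding the statement. First I would feed the $|G|^{n-1}=4^5=1024$ lattice points $\psi(\mathfrak G)\subset M^6$ into \texttt{Normaliz}, working in the $18$-dimensional sublattice spanned by $\{e_{(i,\A)},e_{(i,\B)},e_{(i,\C)}\}_{1\le i\le 6}$ rather than in $M^6$ itself; this is essential both for getting the normalization right and for keeping the computation tractable. \texttt{Normaliz} then certifies that the monoid generated by these $1024$ points (together with the grading in which each vertex has degree one) is normal: the semigroup of lattice points in the cone over $P$ coincides with the affine monoid generated by the vertices. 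By the standard dictionary (recalled in Appendix \ref{sec:App}) normality of this monoid is exactly projective normality of the associated toric variety $X(G,K_{1,6})\subset\PP^{1023}$, and a normal affine monoid algebra is Cohen--Macaulay by Hochster's theorem. The same \texttt{Normaliz} run outputs the $h^*$-vector, i.e.\ the numerator $N(t)$ of the Hilbert series written over $(1-t)^{19}$ (dimension $18$ plus one for the cone), and the Hilbert (quasi-)polynomial $H(t)$; since the variety is projectively normal these are honest invariants of the embedding and one simply transcribes the output.

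Next I would repeat this for the two codimension-two faces. The face $\tilde P$ is cut out by $e_{(6,\B)}^*=e_{(6,\C)}^*=0$, so its vertices are precisely the $\psi(g_1,\dots,g_6)$ with $g_6\in\{0,\A\}$; combinatorially this is (a copy of) the polytope attached to the group $\ZZ_2$ sitting on the last leaf, fibred over the $K_{1,5}$ data, and it has $4^4\cdot 2=512$ vertices in a $16$-dimensional lattice, hence the Hilbert series over $(1-t)^{17}$. Likewise $\tilde P'$ is cut out by $e_{(5,\C)}^*=e_{(6,\C)}^*=0$ and has the $4^4\cdot 3^2=1296$ vertices with $g_5,g_6\neq\C$, again in a $16$-dimensional lattice. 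Running \texttt{Normaliz} on each produces $\tilde N(t)$ and $\tilde N'(t)$ exactly as displayed. (One should double-check that these two linear subspaces actually define faces of $P$ — equivalently that $e_{(6,\B)}^*+e_{(6,\C)}^*$ and $e_{(5,\C)}^*+e_{(6,\C)}^*$ are supporting functionals — which is immediate since each vertex $\psi(g)$ has exactly one nonzero coordinate among $\{e_{(i,0)}^*,e_{(i,\A)}^*,e_{(i,\B)}^*,e_{(i,\C)}^*\}$ for each fixed $i$, so these sums are nonnegative and vanish precisely on the claimed subsets.)

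Finally, for the statements about the degrees of Markov basis elements, I would argue as follows. For a projectively normal toric variety the Castelnuovo--Mumford regularity of its coordinate ring bounds the degrees of the minimal generators of the defining ideal, and regularity can be read off from (or at least bounded by) the data already computed: the degree of the $h^*$-polynomial together with the Cohen--Macaulay property gives $\operatorname{reg}\le \deg N(t)+1$. Since $\deg N(t)=15$, $\deg\tilde N(t)=13$, and $\deg\tilde N'(t)=13$, this yields the claimed bounds $16$, $14$, $14$ on the degrees occurring in the respective Markov bases; alternatively one can have \texttt{Normaliz} output the degrees of the Hilbert basis / the minimal Markov basis directly and just verify that the maximal degree matches. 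The main obstacle here is purely computational feasibility: the ambient space $\PP^{1023}$ is large, and as the authors note elsewhere a direct \texttt{4ti2} Markov-basis computation for $n=6$ is hopeless. The point of this proposition is that the \emph{polytopal} invariants — normality, $h^*$-vector, and the regularity bound — are nonetheless within reach of \texttt{Normaliz}, and it is exactly these bounds (degree $\le 14$ on the two faces, degree $\le 16$ in general) that will later be combined with the inductive table-manipulation arguments of Sections \ref{part1}--\ref{part2} to pin the generation degree for $n=6$ down to $4$.
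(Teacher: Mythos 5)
Your proposal is essentially the paper's own proof: the authors likewise simply run \texttt{Normaliz} on the vertex sets (in the $18$-dimensional sublattice) to certify normality and obtain the Hilbert series, and then invoke the regularity facts for normal (hence Cohen--Macaulay) toric algebras collected in Appendix \ref{sec:App}, giving the degree bounds $\deg N+1=16$ and $\deg\tilde N+1=\deg\tilde N'+1=14$ exactly as you do. The only slip is your vertex count for $\tilde P'$: since the coordinates of a flow are constrained by $\sum g_i=0$, the entries $g_5,g_6$ are not free, and the number of flows with $g_5,g_6\neq\C$ is $4^3\cdot(3+2+2+2)=576$ (as stated in the paper's remark), not $4^4\cdot 3^2=1296$; this does not affect the argument, which only needs the correct characterization of the vertex set fed to \texttt{Normaliz}.
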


\begin{proof}
The computation of Hilbert series and verification of normality were obtained using \texttt{Normaliz} \cite{Normaliz}. The statements about the degree of Markov basis are a consequence of well-known theorems on regularity of normal toric varieties, see Appendix \ref{sec:App}.
\end{proof}

\begin{lemma}\label{prop:n6faces}
The following three codimension three faces $P_1, P_2, P_3$ of $P$ have Markov basis with elements of degree at most four:
\begin{enumerate}
\item [(i)] $P_1$ contains points corresponding to flows that have $0$ on the sixth leaf. This is the intersection of $P$ with the linear subspace $e_{(6,\A)}^*=e_{(6,\B)}^*=e_{(6,\C)}^*=0$ and is isomorphic to the Kimura $3$-parameter model polytope for five leaves.
\item[(ii)] $P_2$ contains points corresponding to flows that do not have $\B$ or $\C$ on the sixth leaf and do not have $\C$ on the fifth leaf. This is the intersection of $P$ with the linear subspace $e_{(5,\C)}^*=e_{(6,\B)}^*=e_{(6,\C)}^*=0$.
\item[(iii)] $P_3$ contains points corresponding to flows that do not have $\C$ on the fourth, the fifth and the sixth leaf. This is the intersection of $P$ with the linear subspace $e_{(4,\C)}^*=e_{(5,\C)}^*=e_{(6,\C)}^*=0$.  
\end{enumerate} 
\end{lemma}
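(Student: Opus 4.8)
The plan is to prove each of the three cases by exhibiting an explicit combinatorial structure on the polytopes $P_1$, $P_2$, $P_3$ and then invoking the results already available. For $P_1$ the statement is essentially immediate: as noted in the statement itself, $P_1$ is isomorphic to the polytope of the Kimura $3$-parameter model $X(G,K_{1,5})$, whose Markov basis degrees are bounded by four by the Proposition on $I(X(G,K_{1,5}))$ recorded in Section~\ref{part0}. So case (i) requires only that one identifies the lattice isomorphism between the face $\{e^*_{(6,\A)}=e^*_{(6,\B)}=e^*_{(6,\C)}=0\}\cap P$ and the five-leaf polytope --- flows of length $6$ ending in $0$ are in bijection with flows of length $5$ --- and observes that this isomorphism carries the toric ideal to the toric ideal, hence the Markov bases correspond.

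For cases (ii) and (iii), the faces $P_2$ and $P_3$ are not themselves Kimura polytopes, but they are still toric varieties of manageable size, so the approach I would take is computational in the same spirit as Proposition~\ref{prop:n6}: feed the vertex descriptions --- flows of length $6$ subject to the stated coordinate restrictions (no $\C$ on leaf $6$ and no $\B$ on leaf $6$ and no $\C$ on leaf $5$ for $P_2$; no $\C$ on leaves $4,5,6$ for $P_3$) --- to \texttt{4ti2}, which computes the Markov basis directly, and read off that all generators have degree at most four. The dimensions have dropped to $15$ (codimension three in the $18$-dimensional $P$) and the number of vertices is considerably smaller than for the full $n=6$ polytope, so unlike the situation for $P$ itself these computations should be feasible. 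Alternatively, since Proposition~\ref{prop:n6} already gives via \texttt{Normaliz} that $\tilde P$ and $\tilde P'$ are projectively normal with Castelnuovo--Mumford regularity bounds forcing Markov degree at most $14$, one can run \texttt{Normaliz} on the further hyperplane sections $P_2\subset\tilde P'$ and $P_3$ to get both normality and a regularity bound, then use \texttt{4ti2} only to check that no generator of degree between five and that bound is actually needed.

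The key steps, in order, are: (1) write down the vertex sets of $P_1,P_2,P_3$ explicitly as sublattices of the basis $e_{(i,g)}$, $i\in[6]$, $g\in G$, exactly as in the setup preceding Proposition~\ref{prop:n6}; (2) for $P_1$, exhibit the lattice-polytope isomorphism to the five-leaf Kimura polytope and cite the Proposition on $I(X(G,K_{1,5}))$; (3) for $P_2$ and $P_3$, run the \texttt{Normaliz}/\texttt{4ti2} computation and record that the output Markov basis has maximal degree four. The main obstacle I anticipate is purely computational feasibility for $P_2$ and $P_3$: the remark before Proposition~\ref{prop:n6} warns that the full $n=6$ Markov basis computation is out of reach, so one must be sure that dropping to codimension three (and, for $P_2$, sitting inside the already-analyzed $\tilde P'$) really does bring the problem into range; if a direct \texttt{4ti2} run stalls, the fallback is to bound the regularity first via \texttt{Normaliz} (as was done for $\tilde P,\tilde P'$) so that \texttt{4ti2} need only verify the absence of high-degree generators up to that explicit bound rather than compute the entire basis from scratch. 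There is no genuinely new mathematical content here beyond the earlier sections --- the lemma is a packaging of three auxiliary computations that will be used as the seeds and side-inputs for the $n=6$ induction in Section~\ref{part3}.
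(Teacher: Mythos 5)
Your proposal matches the paper's proof: the authors simply ran \texttt{4ti2} on all three faces and recorded that the Markov bases consist of quadrics, cubics, and quartics (for $P_2$: $47112$ elements; for $P_3$: $57058$ elements), with $P_1$ being exactly the already-computed five-leaf Kimura polytope. Your extra fallback via \texttt{Normaliz} regularity bounds is not needed in the paper for this lemma, but the core route --- identify $P_1$ with the $n=5$ case and compute $P_2$, $P_3$ directly --- is the same.
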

\begin{proof}
We employed \texttt{4ti2} \cite{4ti2} to compute explicitly the Markov basis in all three cases. More specifically, for $P_2$ we obtained $47112$ relations: $36840$ quadrics, $2304$ cubics, and $7968$ quartics. For $P_3$, we obtained $57058$ relations: $48600$ quadrics, $2176$ cubics, and $6282$ quartics. 
\end{proof}

\begin{remark}
The polytopes $P_1, P_2$ and $P_3$ are not isomorphic, although they have the same dimension. One can easily see that $P_1, P_2, P_3$ have  $256, 384, 432$ vertices respectively. Similarly, $\tilde P$ and $\tilde P'$ have $512$ and $576$ vertices respectively.
\end{remark}

Let us consider a Markov basis element $B$ of $P$. We show that one of the following holds: 
\begin{enumerate}
\item[(i)] $B$ has either degree less than or equal to four;
\item[(ii)] $B$ has $\deg(B) > 16$, which is not possible by Proposition \ref{prop:n6};
\item[(iii)] $B$ is a Markov basis element of $\tilde P$ or $\tilde P'$ of degree at least $15$, which is not possible by Proposition \ref{prop:n6};
\item[(iv)] $B$ is a Markov basis element for a polytope isomorphic to $P_1, P_2$ or $P_3$ (in this case, it has degree at most four by Lemma \ref{prop:n6faces}).
\end{enumerate}

\begin{proposition}
Any Markov basis element $B$ for $P$ has degree at most four.
\end{proposition}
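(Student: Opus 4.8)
The plan is to analyze a Markov basis element $B$ for $P$ by tracking which columns (leaves) carry nonzero entries beyond the ``trivial'' ones, and to show that whenever $B$ does not already have degree at most four, one can apply the symmetry group $\mathfrak S_n \times \mathfrak G \times \mathrm{Aut}(G)$ together with the reduction machinery of Sections \ref{part1} and \ref{part2} to place $B$ inside one of the faces $\tilde P$, $\tilde P'$, $P_1$, $P_2$, $P_3$. First I would fix a pair of rows of the two tables representing $B$ realizing the minimal Hamming distance, exactly as in Remark after the Hamming distance definition. By Corollary \ref{hamming distance at most 2}, after moves of degree at most four we may assume this distance is $0$, $2$, or $3$; if it is $0$ the binomial factors as a variable times a binomial with fewer rows and we finish by the outer induction on degree (which for $n=6$ bottoms out at $n=5$, handled computationally in Proposition above). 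So the substance is the cases of Hamming distance $2$ and $3$, where I would run the \emph{same} arguments as in Sections \ref{part1} and \ref{part2}, observing that those arguments only ever invoked moves of degree at most four plus the inductive hypothesis ``$\phi(G,K_{1,n-1})\le 4$'' and the reduction Theorem \ref{n->n-1 by badpairs}. For $n=6$ the role of $\phi(G,K_{1,5})\le 4$ is supplied by the computation, so the whole scheme goes through verbatim, \emph{provided} one can still arrange the agreement-string hypotheses that those proofs use.

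The key point where $n=6$ is special is that the proofs in Section \ref{part2} needed $n\ge 7$ to guarantee an agreement string of length $\ge 2$ among columns outside the first five (``since $n\ge 7$, \dots is at least $n-5\ge 2$''), and to guarantee the final descent via bad pairs lands in the already-known case $\phi(G,K_{1,n-1})\le 4$. For $n=6$ the agreement string between $r_{0x}$ and $r_{\A z}$ outside columns $1,\dots,5$ has length only $n-5=1$, so one column — say column $6$ — carries a $0$ in those two rows. This is exactly the configuration of the face $\tilde P$ if \emph{no} row of the tables has a $\B$ or $\C$ on leaf $6$, of $P_1$ if no row has a nonzero entry on leaf $6$, and more generally the plan is: after the reductions, either $B$ has degree $\le 4$, or $B$ never uses column $6$ nontrivially and is a Markov element of (a polytope isomorphic to) $P_1$, or $B$ uses leaf $6$ only with entries in $\{0,\A\}$ and hence is a Markov element of $\tilde P$, or it uses leaves $5,6$ only with entries avoiding $\C$ and is a Markov element of $\tilde P'$ (respectively $P_2, P_3$ after permuting columns). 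In the $\tilde P, \tilde P'$ cases the degree is either $\le 14$ or $>16$; combined with Proposition \ref{prop:n6} (Markov degrees of $P$ are $\le 16$), degrees in $\{15,16\}$ cannot occur for such $B$, and degrees $\le 14$ fall under the face bound, while the $P_1,P_2,P_3$ cases give degree $\le 4$ directly by Lemma \ref{prop:n6faces}. I would spell out the four-way dichotomy (i)--(iv) already displayed before the statement and check that the reduction arguments, when they fail to reduce the Hamming distance, always terminate by exhibiting $B$ as living on one of these faces.

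Concretely I would proceed as follows. Step 1: reduce the Hamming distance of a closest pair of rows to $\le 2$ using Corollary \ref{hamming distance at most 2}, noting all moves used have degree $\le 4$; if it becomes $0$, peel off a variable and induct. Step 2: in the Hamming distance $3$ case, rerun Lemmas \ref{exists 00 in abc}--\ref{counting funct at most -1 in abc} and Proposition \ref{abc}; these use only columns $1,\dots,5$ and moves of degree $\le 4$, so they apply unchanged for $n=6$ and reduce to distance $2$. Step 3: in the distance $2$ case, reduce to the main case (\ref{maincase}) by Section \ref{subsec:redto3} (again columns $1,\dots,5$, moves of degree $\le 4$), and then run the preliminary Lemmas \ref{lem:difference}--\ref{lem:b->0} and the main-case argument; the one hypothesis to re-examine is the existence of the agreement column, which for $n=6$ is the single column $6$. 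Step 4: when the main-case argument would invoke Theorem \ref{n->n-1 by badpairs} to pass to $n-1=5$ leaves, it needs $\phi(G,K_{1,5})\le 4$, which holds by the Proposition on $I(X(G,K_{1,5}))$; when instead the argument gets stuck because column $6$ (or columns $5,6$) cannot be cleared of bad entries, I would observe that this stuck configuration is precisely one where $B$ is supported on a face isomorphic to $P_1$, $P_2$, $P_3$, $\tilde P$, or $\tilde P'$, and conclude by Lemma \ref{prop:n6faces} and Proposition \ref{prop:n6}. The main obstacle I anticipate is Step 3--4 bookkeeping: verifying that every branch of the Section \ref{part2} case analysis either genuinely decreases the Hamming distance using only degree-$\le 4$ moves within six columns, or else exhibits the binomial on one of the five distinguished faces, with no branch slipping through that needs an agreement string longer than one column; this is where the hypothesis $n\ge 7$ was really used and where the $n=6$ proof must substitute the explicit face computations of Proposition \ref{prop:n6} and Lemma \ref{prop:n6faces}.
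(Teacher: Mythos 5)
There is a genuine gap in the proposal. You correctly read off the paper's four-way dichotomy (i)--(iv) and the role of $\tilde P, \tilde P', P_1, P_2, P_3$, but the assertion that ``when the argument gets stuck because column $6$ (or columns $5,6$) cannot be cleared of bad entries, ... this stuck configuration is precisely one where $B$ is supported on a face'' is exactly the thing that needs proof, and your plan gives no mechanism for it. The Section~\ref{part2} machinery requires an agreement string of length at least two to even \emph{define} bad pairs in columns $n-1,n$, and for $n=6$ only one agreement column survives, so the bad-pair elimination and the descent via Theorem~\ref{n->n-1 by badpairs} simply do not start. There is no reason a failure of that reduction forces the entries of an entire column of $B$ to avoid certain group elements; that conclusion would have to be extracted from the combinatorics of the tables, and you never supply that step.

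The paper's actual argument is structurally different. After reducing to the main case~(\ref{maincase}) and establishing Claims (i) and (ii) (which do reuse the preliminary Lemmas~\ref{lem:difference}, \ref{lem:agreeon0}, \ref{lem:differ not by c}, \ref{lem:b->0}), it \emph{classifies the columns of $B$ by how many distinct group elements appear in them} (types (a)--(d)). If a column is generic (type (a)), every group element appears in it, and together with Claims (i) and (ii) this forces the existence of many pairwise distinct ``difference rows''; summing the first row, the difference rows, the rows contributing $0$ in columns $1,2$, and the rows contributing $\C$, the paper bounds $\deg(B)$ from \emph{below}. When all six columns are of type (a) this gives $\deg(B)>18$, contradicting the regularity bound $16$ from Proposition~\ref{prop:n6}; with one or two degenerate columns one still gets $\deg(B)\ge 17$ or $\ge 15$, contradicting the bounds for $P$, $\tilde P$, or $\tilde P'$. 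Only when three or more columns are degenerate does $B$ land on a face isomorphic to $P_1, P_2, P_3$, whose Markov bases are computed directly in Lemma~\ref{prop:n6faces}. This lower-bound-on-degree argument via difference rows and column types is the heart of the proof and is absent from your proposal; without it, the case where $B$ uses all four group elements in every column (and hence lies on no proper face) is not handled.
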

\begin{proof}

It is enough to restrict to the main case (\ref{maincase}). We first prove two claims, Claim (i) and (ii). \\
\noindent Claim (i): For any row $r$ of $T_0$ distinct from the first one, 
for any pair of indices $2< i<j\leq 6$, we have that either $\phi_{\C}(r_{0\B}(i))=\phi_{\C}(r(i))$ or $\phi_{\C}(r_{0\B}(j))=\phi_{\C}(r(j))$. The analogous statement holds for $T_1$, with the group homomorphism $\phi_{\C}$ replaced by $\phi_{\B}$.  
\begin{proof}[Proof of Claim (i)]
Suppose the statement is not true for some pair of indices $i,j$. If $r_{0\B}(i)-r(i)=r_{0\B}(j)-r(j)$, then we can make a quadratic move on $i,j$, and conclude using the Difference Lemma \ref{lem:difference}. Thus, without loss of generality, we may assume $r_{0\B}(i)-r(i)=\A$ and $r_{0\B}(j)-r(j)=\B$. If there exists another index $2<k\leq 6$ such that $r_{0\B}(k)-r(k)\neq 0$, then we can make a move on a subset of $\{i,j,k\}$ and, again, conclude by the means of the Difference Lemma \ref{lem:difference}. In conclusion, $\sum_{l=3}^6r_{0\B}(l)=\A+\B+\sum_{l=3}^6r(l)$. As $r$ and $r_{0\B}$ are flows and $r_{0\B}(1)+r_{0\B}(2)=\B$, this contradicts Lemma \ref{lem:differ not by c}, which prescribes the first two columns of a row differing {\it not} by $\C$ with $r_{0\B}$. 
\end{proof}
\indent By Proposition \ref{aaaaa,aaaa,aaabc,aabb,aaab} and Lemma \ref{lem:agreeon0}, we may assume that $r_{0\B}(6)=r_{\A\B}(6)=0$, as the disagreement string between the two rows has length at most three, outside the first two columns. \\ 
\noindent Claim (ii): There exists at most one index $i>2$ such that $\phi_{\C}(r_{0\B}(i))\neq 0$.
\begin{proof}[Proof of Claim (ii)]
As the number of such indices must be odd it is enough to prove that not all $r_{0\B}(3), r_{0\B}(4), r_{0\B}(5)$ are equal to $\A$ or $\B$. Not all can be equal to $\B$ since, by by Lemma \ref{lem:b->0}, that would contradict the fact that $r_{\A\B}$ is a flow. Say $r_{0\B}(3)=\B$ and $r_{0\B}(4)=r_{0\B}(5)=\A$. Then we have $r_{\A\B}(3)=0$ by Lemma \ref{lem:b->0} and thus $r_{\A\B}(4)+r_{\A\B}(5)=\C$. However, we may exclude $\{r_{\A\B}(4),r_{\A\B}(5)\}=\{\A,\B\}$ by Lemma \ref{lem:agreeon0} and we may exclude $\{r_{\A\B}(4),r_{\A\B}(5)\}=\{0,\C\}$ by Lemma \ref{lem:b->0}.
\end{proof}

\indent To continue our proof, we need to introduce some terminology, which we will use only here. A column index $1\leq i\leq 6$ is of type: 
\begin{enumerate}
\item[(a)] if all elements of $G$ appear in the corresponding $i$th column of $T_0$ (and of $T_1$); 
\item[(b)] if exactly three elements of $G$ appear in the $i$th column; 
\item[(c)] if exactly two elements of $G$ appear in the $i$th column;
\item[(d)] if exactly one element of $G$ appears in the $i$th column. 
\end{enumerate}
\noindent {\bf Step 0}: We suppose that all columns are of type $(a)$. \\
\indent By Claim (ii), there exists one index $j>2$ such that $\phi_{\C}(r_{0\B}(j))\neq 0$. For $i>2$, $i\neq j$ there must exist at least two rows $r_{i,1}, r_{i,2}$ such that $r_{i,1}(i)=r_{0\B}(i)+\A$ and $r_{i,2}(i)=r_{0\B}(i)+\B$. Note that $r_{i,1}$, $r_{i,2}$ are not the first row. Further, for the index $j$ there must exist one row $r_{j,1}$ different from the first one such that $r_{j,1}(j)=r_{0\B}(j)+\A$ or $r_{j,1}(j)=r_{0\B}(j)+\B$. 
All these rows are distinct by Claim (i). Hence, we obtain seven rows; we call them \emph{difference rows} for $T_0$. Note that the \emph{difference rows} for $T_0$ may only have $\A$ and $\B$ in columns $1,2$ by Lemma \ref{lem:differ not by c}. Analogously, we obtain at least seven \emph{difference rows} in $T_1$, with copies of $0$ or $\B$ in columns $1$ and $2$. \\
\indent If there exist \emph{difference rows} in $T_0$ and $T_1$ with $\B\B$ in the first two columns, then we obtain the string $\B\B 0$ in both tables and we conclude by induction on the degree of $B$. \\ 
\indent Thus suppose that there is no string $\B\B$ in columns $1,2$ of $T_1$. It follows that there must be at least seven copies of $0$ in columns $1,2$ in the \emph{difference rows} of $T_1$. Consequently, there are at least nine copies of $0$ in columns $1,2$ in $T_1$. By Lemma \ref{lem:forbidden configurations}, there is no string $00$ in columns $1,2$ in $T_0$, and the \emph{difference rows} for $T_0$ do not have copies of $0$ in columns $1,2$. In conclusion, we have at least this amount of distinct rows in $T_0$: 
\begin{enumerate}
\item[(i)] three, that are the first ones; 
\item[(ii)] seven, that are the \emph{difference rows};
\item[(iii)] seven, that contain copies of  $0$ in column $1$ or $2$;
\item[(iv)] two, that have $\C$ in column $1$ or $2$.
\end{enumerate}
Then, we have $\deg(B)> 18$.  This is impossible for a Markov basis element by Proposition \ref{prop:n6}.\\

\noindent {\bf Step 1}: We suppose that there exists exactly one column of type $(b)$ and all others are of type $(a)$. We may proceed as before, however we obtain only six \emph{difference rows} in the case when the column of type $(b)$ has column index $3\leq i \leq 6$. In the case when the column index of the column of type $(b)$ is either $1$ or $2$, we obtain seven \emph{difference rows}, but we cannot assume that there exists an additional row with $\C$ in the same column index of the column of type $(b)$. In either of these cases, we have $\deg(B)\geq 3+2\times 6+2=17$, that contradicts Proposition \ref{prop:n6}.\\

\noindent {\bf Step 2}: We suppose that there exist exactly two columns of type $(b)$ (resp.~ one column of type $(c)$). Here, we obtain five \emph{difference rows}. However, $B$ represents a Markov element for $\tilde P'$ (resp.~$\tilde P$), whose ideals have regularity $14$; see Appendix \ref{sec:App} for the definition of the associated ideal. We obtain the bound $\deg B\geq 3+2\times 5+2=15>14$ which contradicts Proposition \ref{prop:n6}. \\

\noindent {\bf Step 3}: We suppose there exist either: 

\begin{enumerate}

\item[(i)] three columns of type $(b)$), or 
\item[(ii)] one column of type $(b)$ or $(c)$ and one column of type $(c)$, or 
\item[(iii)] one column of type $(d)$. 

\end{enumerate}
\noindent In such cases we conclude by Lemma \ref{prop:n6faces}.
\end{proof}

\subsubsection{Proof of the main case}

In this last part, we finish our proof dealing with the main case (\ref{maincase}). This will be done uniformly, i.e., with the same arguments in all the three instances of the main case and only technical details differ. Here the number of leaves is $n\geq 7$. The outline is as follows:

\begin{enumerate}
\item[(i)] We show that, if $r_{0\B}(i)=r_{\A \B}(i)$, then we have $r_{0\B}(i)=r_{\A \B}(i)=0$;

\item[(ii)] Among the pairs of tables we consider (tables where we have fixed the first two entries of the rows $r_{0\B}$ and $r_{\A \B}$ and performed moves of degree at most four so that $r_{0\B}$ and $r_{\A \B}$ have the agreement string as large as possible) using at most moves of degree four, we attain the situation where the number of {\it bad pairs}, i.e., strings $xy$, with $x,y\neq 0$, in columns $n-1$ and $n$ is as small as possible;

\item[(iii)] We show that we can kill all the {\it bad pairs}, i.e., we can make moves of degree at most four killing all of them. Summing up the two columns indexed by $n-1$ and $n$ allows us to conclude by induction on the number of leaves $n$; see Theorem \ref{n->n-1 by badpairs}. 

\end{enumerate}

We are now ready to establish the main case in the following lemmas. 

\begin{lemma}
We may assume that no rows in $T_0$ has the string $\A\A$ or $\B\B$ in columns $n-1$ and $n$. Analogously, no row in $T_1$ has the string $\A\A$ or $\C\C$ in columns $n-1$ and $n$.
\end{lemma}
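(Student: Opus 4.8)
The plan is to mimic the structure of the earlier ``forbidden configurations'' lemmas (Lemma~\ref{lem:forbidden configurations} and the lemmas immediately following it), but now applied to the distinguished columns $n-1$ and $n$ rather than to columns $1,2$. We want to show that any pair of tables $T_0,T_1$ in the main case (\ref{maincase}) can be transformed, by moves of degree at most four, into a pair with no row of $T_0$ having $\A\A$ or $\B\B$ in columns $n-1,n$, and no row of $T_1$ having $\A\A$ or $\C\C$ in columns $n-1,n$. Recall that, by construction (Remark \ref{there exists two entries that agree} and Definition \ref{agreementstring}), both $r_{0\B}$ and $r_{\A\B}$ carry the agreement string $0$ in columns $n-1$ and $n$; in particular each of $T_0$ and $T_1$ already contains a row with $00$ in these two columns.

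First I would handle $T_0$. Suppose some row $r$ of $T_0$ has $\A\A$ in columns $n-1,n$. Since $r_{0\B}$ has $00$ there and $r_{0\B}$ has $0$ in column~$1$ while $r$ has some entry $r(1)$, a direct quadratic move between $r$ and $r_{0\B}$ on columns $n-1,n$ (a ``move of type $xx$'' as used in the Standard Lemma \ref{standard argument}) replaces $\A\A$ with $00$; but this changes the agreement string only if we are not careful, so instead I would argue as in Lemma \ref{lem:differ not by c}: the difference between $r$ and $r_{0\B}$ on column $n-1$ (and on $n$) is $\A$, which is \emph{not} $\C$, so by Lemma \ref{lem:differ not by c} we may assume the first two entries of $r$ are $\A\A,\B\B,\A\B$ or $\B\A$. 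If $r$ has $\B\B$ or $\B\A$ or $\A\B$ in columns $1,2$, a quadratic move with $r_{0\B}$ (resp.\ with $r_{\A0}$) in columns $\{1,2,n-1,n\}$ produces a row starting with $\B\B$ (or otherwise reduces the Hamming distance), finishing by induction on the degree. If $r$ has $\A\A$ in columns $1,2$, then the quadratic move with the first row of $T_0$ on columns $1,2$ reduces the Hamming distance. The case $\B\B$ in columns $n-1,n$ is entirely symmetric: its difference with $r_{0\B}$ is $\B\neq\C$, so Lemma \ref{lem:differ not by c} applies verbatim. The statement for $T_1$ then follows from Remark \ref{rem:symmetry} in Cases I and III, and by the same argument (with $\phi$-roles of $\C$ and $\B$ swapped, using $r_{\A\B}$ and $r_{w\A}$) directly in Case II.

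The one point requiring care --- and the main obstacle --- is that a quadratic move on columns $n-1,n$ alone might \emph{increase} the length of the agreement string of $r_{0\B}$ and $r_{\A\B}$ in some other column, or, worse, fail to be applicable because the partner row's entries in columns $n-1,n$ are some $uv$ with $u,v\notin\{0,\A\}$; this is exactly the situation the Standard Lemma \ref{standard argument} is designed for, so I would invoke it rather than re-deriving the case analysis (the three sub-cases $u=\A$ or $v=\A$; $u=0$ or $v=0$; $u=v$; and $u,v\notin\{0,\A\}$ with $u+v+\A=0$). Since the agreement string of $r_{0\B}$ and $r_{\A\B}$ was chosen maximal, any move that would increase it leads to a contradiction with that maximality (or, after relabelling, puts us in a strictly better configuration and we restart); so no genuine loop occurs. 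I expect the write-up to be short: it is a direct adaptation of Lemmas \ref{lem:forbidden configurations}--\ref{lem:differ not by c} with columns $1,2$ replaced by $n-1,n$, using that the distinguished element is $\C$ for $T_0$ and $\B$ for $T_1$, and that $\A\A$ and $\B\B$ differ from $00$ by $\A$ and $\B$ respectively, neither of which is the distinguished element.
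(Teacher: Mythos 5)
Your reduction via Lemma~\ref{lem:differ not by c} starts reasonably (a row $r$ with $\A\A$ or $\B\B$ in columns $n-1,n$ differs from $r_{0\B}$ there by $\A$ or $\B$, neither of which is $\C$, so $r(1)r(2)\in\{\A\A,\A\B,\B\A,\B\B\}$), but the case analysis you then run does not close, and the concluding moves you propose are not valid or not conclusive. If $r(1)r(2)=\A\A$, the ``quadratic move with the first row of $T_0$ on columns $1,2$'' is vacuous: the first row of $T_0$ also has $\A\A$ there, so nothing changes and no Hamming distance is reduced. In the cases $\B\B,\A\B,\B\A$, the move you suggest on columns $\{1,2,n-1,n\}$ with $r_{0\B}$ (or $r_{\A 0}$) is in general not flow-preserving (e.g.\ for $r=\B\B\ldots\A\A$ the exchanged entries sum to $0$ in $r$ but to $\B$ in $r_{0\B}$), and even if one finds a legal exchange, ``a row starting with $\B\B$'' is not a winning configuration anywhere in this section: the terminating configurations are a row starting with $00$ in $T_0$ or with $\A\A$ in $T_1$ (which coincides with the first row of the other table and kills the Hamming distance), or an outright decrease of the Hamming distance. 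Your closing appeal to the Standard Lemma and to maximality of the agreement string does not repair this; maximality is never what terminates the argument here.

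The missing idea is the one the paper actually uses: swap the bad pair with the string $00$ of $r_{0\B}$ (resp.\ $r_{\A\B}$) in columns $n-1,n$ --- a legal quadratic move since $\A+\A=\B+\B=0+0$ --- and then apply the Difference Lemma~\ref{lem:difference} to the second and third rows of the table. After the swap the second row carries $\A$ (or $\B$) in columns $n-1,n$, so at one of these columns its difference with the third row is $\A$ or $\B$ unless the third row itself has entries in $\{\A,\B\}$ there; in that residual case the Difference Lemma already applied \emph{before} the swap, since then the original second row (with $0$ in those columns) differs from the third row by $\A$ or $\B$. Either way one produces a row starting with $00$ in $T_0$ (resp.\ $\A\A$ in $T_1$), which equals the first row of the other table, and one concludes by induction on the degree. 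Your write-up never invokes the Difference Lemma at this decisive point, which is exactly the gap.
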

\begin{proof}
In such a case we make a quadratic move in columns $n-1$ and $n$ and we conclude by applying the Difference Lemma \ref{lem:difference}.
\end{proof}
\begin{lemma}
We may assume that no row in $T_0$ has the string $\A\B$ or $\B\A$  in columns $n-1$ and $n$. Analogously, no row in $T_1$ has the string $\A\C$ or $\C\A$  in columns $n-1$ and $n$.
\end{lemma}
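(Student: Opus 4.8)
The plan is to mimic the structure of the previous lemma and, in each forbidden configuration, produce a row in the same table that starts with a string of the form $xx$ in columns $n-1,n$, so that the Difference Lemma \ref{lem:difference} applies and we decrease the Hamming distance (or the degree). Concretely, for $T_0$ suppose some row $r$ carries $\A\B$ (the case $\B\A$ being symmetric under a transposition of the last two columns) in columns $n-1,n$. We want to show that either we can already finish, or we may assume this does not happen. The idea is to combine $r$ with the distinguished rows $r_{0\B}$ and $r_{\A 0}$ (or $r_{\A\B}$ in \textbf{Case III}), whose entries in columns $n-1,n$ are both $0$ by Lemma \ref{lem:agreeon0} and Remark \ref{there exists two entries that agree}, and with any further row witnessing the relevant entry, exactly as in the proof of Lemma \ref{lem:b->0}.

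First I would set up the counting function. Since $r(n-1)=\A$ and $r(n)=\B$ while both distinguished rows have $00$ there, compatibility of $T_0$ and $T_1$ forces, in columns $n-1$ and $n$, a controlled supply of the symbols $\A$ and $\B$. The natural function to look at is something like $\A_{n-1}+\B_n-0_{n-1}-0_n$ (adjusted by the action of $\mathfrak G$ so it is nonpositive on $T_0$); the first row of the relevant table makes it strictly positive somewhere, producing a row $r'$ with a surplus of $\A$ in column $n-1$ or $\B$ in column $n$. Running through the possible entries of $r'$ in columns $n-1,n$, I would eliminate each possibility: strings $\A\A$, $\B\B$, $\A\B$, $\B\A$ are excluded by the previous two lemmas; the strings involving a $0$ in exactly one of the two columns, together with $r$ and a distinguished row, give a quadratic or cubic move creating $r_2(i)-r_3(i)$ equal to the distinguished element $\C$ (in $T_0$) in the sense of the Difference Lemma \ref{lem:difference}, hence a row starting $\C\C$ in columns $n-1,n$; and the string $\C\C$ itself, or $00$, leads directly to a move with $r$ and a distinguished row that reduces the Hamming distance. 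The symmetric statement for $T_1$, where the forbidden strings are $\A\C$ and $\C\A$ and the distinguished element is $\B$, follows verbatim by Remark \ref{rem:symmetry}.

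The main obstacle I anticipate is bookkeeping of the case split on $r'$'s entries in columns $n-1,n$ while simultaneously tracking what happens in columns $1,2$: several of the moves touch columns $1,2,n-1,n$ and so are quartic, and one must check they stay within degree four and are compatible with Lemma \ref{lem:forbidden configurations} (no $\C\C,0\C,\C 0,00$ in columns $1,2$ of $T_0$, no $\C\C,\A\C,\C\A,\A\A$ in columns $1,2$ of $T_1$). Concretely, when $r'$ has, say, $\A 0$ in columns $n-1,n$, pairing it with the first row and with $r$ may force an entry in columns $1,2$ that one then has to push back using a further quadratic move, and one must make sure this does not loop. I expect this to be routine but delicate, exactly in the style of the proof of Lemma \ref{counting funct at most -1 in abc}: each subcase is a short finite check, and the Difference Lemma \ref{lem:difference} together with Lemma \ref{lem:differ not by c} does the heavy lifting of constraining columns $1,2$. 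Once every subcase yields either a row starting $xx$ in columns $n-1,n$ (then apply the Difference Lemma and induct) or an immediate Hamming-distance reduction, the lemma is proved.
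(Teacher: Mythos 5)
Your proposal does not contain the step that actually makes this lemma work, and the mechanism you sketch in its place is flawed. The paper's argument is short and of a different nature: let $r$ be the offending row of $T_0$ with $\A\B$ (or $\B\A$) in columns $n-1,n$. Either $r$ disagrees with $r_{0\B}$ at some further column $i>2$, in which case one quadratic exchange between $r$ and $r_{0\B}$ involving $i$ and a suitable nonempty subset of the agreement-string columns sets up the Difference Lemma \ref{lem:difference} (producing a row with $00$ in columns $1,2$ of $T_0$, hence a Hamming-distance reduction); or $r$ agrees with $r_{0\B}$ at every column $i>2$ except $n-1,n$, and then the flow condition forces $r(1)+r(2)=\B+\A+\B=\A$, which is exactly what Lemma \ref{lem:differ not by c} forbids for a row differing from $r_{0\B}$ at a column $i>2$ not by $\C$. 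This parity-of-flows contradiction is the heart of the proof, and it is absent from your sketch: you cite Lemma \ref{lem:differ not by c} only as generic "heavy lifting" on columns $1,2$ without the flow-sum computation that triggers it.

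The route you propose instead has concrete problems. First, the counting function $\A_{n-1}+\B_n-0_{n-1}-0_n$ takes the same value on $T_0$ and $T_1$ because the tables are columnwise compatible, so it cannot by itself produce the row $r'$ you want; and "adjusting by the action of $\mathfrak G$" is not available here, since acting by a flow with nonzero entries in columns $n-1,n$ destroys the normalizations already fixed (all-zero first row of $T_1$, zero agreement string, the forms of $r_{0\B}$ and $r_{\A\B}$). Second, you invert the Difference Lemma: it yields a reduction precisely when rows two and three differ at some $i>2$ by a non-distinguished element ($\A$ or $\B$ in $T_0$), producing a row starting with $00$ in columns $1,2$; a difference equal to the distinguished element $\C$ is the one case where it gives nothing, and it never produces "a row starting $\C\C$ in columns $n-1,n$." Third, even if it did, a row with $\C\C$ under the agreement string is not a conclusion: excluding exactly that configuration is the content of the next, much longer lemma in the paper, so your elimination of that subcase is circular. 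As written, the proposal is a case-analysis sketch with acknowledged bookkeeping left open and without the decisive idea, so it does not establish the lemma.
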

\begin{proof}

Let $r$ be such a row with such a string in columns $n-1$ and $n$ of $T_0$. If for some other column index $i>2$, we have $r(i)\neq r_{0\B}(i)$ then we may exchange $i$ and a nonempty subset of elements under the agreement string. Then we conclude by applying the Difference Lemma \ref{lem:difference}. As $r$ is a flow, we have $r(1)+r(2)=\A$. This contradicts Lemma \ref{lem:differ not by c}.
\end{proof}
\begin{lemma}
We may assume that under the agreement string no row in $T_0$ has $\C\C$. Analogously, no row in $T_1$ has $\B\B$.
\end{lemma}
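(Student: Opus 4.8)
The plan is to argue by minimality, in the spirit of the two preceding lemmas. Among all pairs of tables obtainable from $T_0,T_1$ by moves of degree at most four that keep us in the main case (\ref{maincase}) — first rows and first two columns fixed, the agreement string of $r_{0\B}$ and $r_{\A\B}$ of maximal length, and the number of bad pairs minimal — choose one for which the number of rows of $T_0$ carrying $\C$ in two columns of the agreement string is minimal; we must show this number is zero. So suppose on the contrary that some row $r$ of $T_0$, distinct from the first row and from $r_{0\B}$, has $r(i)=r(j)=\C$ with $i,j$ in the agreement string, so that $r_{0\B}(i)=r_{0\B}(j)=0$. Up to the action of $\mathfrak S_n$ on the agreement columns this covers in particular the pair $i=n-1$, $j=n$.

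The key point is that $\C+\C=0$ in $G=\ZZ_2\times\ZZ_2$, so the block $\C\C$ in columns $i,j$ of $r$ may be exchanged, by a quadratic move in these two columns, with the block $00$ sitting in columns $i,j$ of the first row $(\A,\A,0,\dots,0)$ of $T_0$, and every row stays a flow. After this move no ordinary row of $T_0$ carries $\C\C$ any more; instead the first row of $T_0$ reads $\A\A\C\C$ on columns $1,2,i,j$, so the disagreement string between the first rows of $T_0$ and $T_1$ has length four and is of the type $\A\A\B\B$ treated in Proposition \ref{aaaaa,aaaa,aaabc,aabb,aaab}(ii). By Corollary \ref{Reduction of four} this disagreement string can be brought back to length at most three using moves involving only the four columns $1,2,i,j$; combining this with Corollary \ref{hamming distance at most 2} (and Proposition \ref{abc} for the intermediate length-three string) we return to the main-case setup, now with one fewer row carrying a $\C\C$ under the agreement string and with nothing else disturbed, contradicting minimality. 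The statement for $T_1$ is the mirror of this: in {\bf Case I} and {\bf Case III} it follows formally from Remark \ref{rem:symmetry}, and in {\bf Case II} one runs the same argument verbatim, using $\B+\B=0$ and absorbing a $\B\B$ block into the zero first row of $T_1$, whose distinguished element is $\B$.

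An alternative that avoids touching the fixed first rows is to push $\C\C$ into $r_{0\B}$ instead: since by Proposition \ref{aaaaa,aaaa,aaabc,aabb,aaab} and Lemma \ref{lem:agreeon0} the disagreement string of $r_{0\B}$ and $r_{\A\B}$ outside columns $1,2$ is already short, after the exchange it is still short enough for Corollary \ref{Reduction of four}, and any residual disagreement of $r_{0\B}$ with $r_{\A0}$ (or $r_{\B0}$) by the distinguished element $\C$ yields, through the Difference Lemma \ref{lem:difference}, a row of $T_0$ starting with $00$, which is excluded by Lemma \ref{lem:forbidden configurations}.

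The delicate part is the book-keeping after the absorbing move: one has to be certain that the reductions of Proposition \ref{aaaaa,aaaa,aaabc,aabb,aaab} and Corollary \ref{Reduction of four} act only on the four columns $1,2,i,j$, so that the agreement string in the remaining columns and the count of bad pairs in columns $n-1,n$ are genuinely left untouched — only then is the appeal to minimality legitimate. The non-symmetric {\bf Case II} is the other point requiring care, since there the $T_1$ half is not a formal consequence of Remark \ref{rem:symmetry} and must be checked directly.
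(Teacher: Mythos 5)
Your opening quadratic move (swapping the $\C\C$ of $r$ with the $00$ of the first row of $T_0$) is legal, but it does not by itself decrease the quantity you propose to minimize: the string $\C\C$ has merely migrated to the first row of $T_0$, which now also sits under the agreement string (and, when $\{i,j\}=\{n-1,n\}$, constitutes a new bad pair in the first row, breaking the normalization on bad pairs). Everything therefore hinges on the claim that the subsequent Hamming-distance reductions restore the main-case setup ``with nothing else disturbed'', and this is exactly where the argument breaks. Corollary \ref{Reduction of four} does confine the length-four step to columns $1,2,i,j$, but $i$ and $j$ are agreement-string columns, so the moves of Proposition \ref{aaaaa,aaaa,aaabc,aabb,aaab} shuffle the entries of arbitrary other rows in precisely those columns; indeed, case (ii) of that proposition can end by acting with the flow $(0,0,\C,\C)$ on all of $T_0$, which alters columns $i,j$ in \emph{every} row, turning each $00$ there into $\C\C$ and destroying the agreement between $r_{0\B}$ and $r_{\A\B}$. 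The length-three step (Proposition \ref{abc}, via Lemmas \ref{exists 00 in abc}--\ref{counting funct at most -1 in abc}) carries no locality statement at all and uses five columns and globally chosen rows. After your round trip, the minimizing pair of rows, the case among I--X, the columns forming the agreement string, and the bad-pair count may all have changed, so the measure you invoke is not defined consistently across the comparison, let alone shown to drop; without a well-founded decreasing quantity the appeal to minimality is vacuous and the procedure need not terminate.

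The fallback routes do not repair this. Pushing $\C\C$ into $r_{0\B}$ destroys two agreements between $r_{0\B}$ and $r_{\A\B}$, contradicting the maximality of the agreement string on which all of Section \ref{part2} (in particular Lemma \ref{lem:agreeon0}) rests; and your reading of the Difference Lemma is inverted: producing a row of $T_0$ starting with $00$ is the \emph{favourable} outcome (it turns the first row of $T_0$ into the all-zero first row of $T_1$ and finishes by induction on the degree), not something ``excluded by'' Lemma \ref{lem:forbidden configurations}, whose own proof consists of exactly such productions. The paper's proof never raises the Hamming distance of the first rows: it first shows, using Lemma \ref{lem:b->0}, compatibility of the tables, the Standard Lemma \ref{standard argument} and Lemma \ref{lem:agreeon0}, that $r_{\A\B}$ may be assumed to contain no $\C$ at all, and then splits according to whether $r_{0\B}$ contains a $\B$, closing each branch with counting functions (such as $\A_3+\B_3-0_{12}-\C_{12}$ and a double count via $\phi_\C$) and explicit cubic and quartic moves that respect the established normalizations. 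To salvage your approach you would need a locality-and-monotonicity statement for the reductions of Section \ref{part1} that the paper does not provide and that, as the $(0,0,\C,\C)$ example shows, fails as you state it.
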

\begin{proof}

Let $r$ be a row in $T_0$ with $\C\C$ under the agreement string. We first claim we may assume that $r_{\A\B}$ does not have $\C$ in any column. For the sake of contradiction, suppose $r_{\A\B}(i)=\C$ for some column index $i$. Whence, by Lemma \ref{lem:b->0}, we have $r_{0\B}(i)=0$. By compatibility of the tables $T_0$ and $T_1$, there exists a row $r'$ in $T_0$ with $r'(i)=\C$. By the Standard Lemma \ref{standard argument}, we can make a move to obtain $r_{0\B}(i)=\C$ and conclude by applying Lemma \ref{lem:agreeon0}.  \\
\indent We divide the rest of the proof into two steps according to whether or not there exists $\B$ in $r_{0\B}$. \\

\noindent {\bf Step 1}: Suppose there exists another $\B$ in $r_{0\B}$. The tables $T_0$ and $T_1$ are the following: 

$$
T_0-T_1=\begin{bmatrix}
\A & \A  & 0 & \ldots & 0 & 0 \\
0 & \B & \B & \ldots & 0 & 0\\
\ldots & \ldots & \ldots & \ldots & \C &\C\\
\ldots & \ldots & \ldots & \ldots & \ldots &\ldots \\
\end{bmatrix}-\begin{bmatrix}
0 & 0 & 0 & \ldots & 0 & 0 \\
\A & \B & 0 & \ldots & 0 & 0\\
\ldots & \ldots & \ldots & \ldots &\ldots & \ldots \\
\ldots & \ldots & \ldots & \ldots &\ldots & \ldots \\
\end{bmatrix}.
$$
\noindent By Lemma \ref{lem:differ not by c}, the counting function $\A_3+\B_3-0_{12}-\C_{12}$ is nonnegative on $T_0$. Let $r$ be a row in $T_1$, where the function is strictly positive. We now exclude the case $r(3)=\B$. Indeed, in this case, if $r(2)=\B$, then we obtain $0\B\B$ in both tables. If $r(2)=\A$, by the positivity of the counting function on $r$, we have $r(1)=\B$ and we may perform a quadratic move to obtain the string $0\B\B$. Whence $r(3)=\A$. \\
\indent Let $r'$ be a row in $T_0$ with $r'(3)=\A$. By the Standard Lemma \ref{standard argument}, we can make a move between $r'$ and $r_{0\B}$ involving this entry. In particular, if $r(1)+r(2)=\C$, then we make a quadratic move between $r$ and $r_{\A\B}$ on first two entries and conclude by Lemma \ref{lem:agreeon0}. Thus $r(1)=r(2)=\B$. Let $r''$ be a row in $T_1$ with $r''(3)=\B$. We finish the proof of {\bf Step 1} by proving that we can always obtain $0\B\B$ in $T_1$.
First, suppose $r''(j)=\A$ for $j=1$ or $2$. Then we may exchange $r''$ with $r$ on column indices $j$ and $3$, obtaining a row $\tilde r$ such that $\tilde r(1)+\tilde r(2)=\C$ and $\tilde r(3)=\B$. Then we can make a quadratic move between $\tilde r$ and $r_{\A\B}$ to obtain $0\B\B$ in both tables. Also, notice that $r''(2)\neq \B$ as this immediately leads to $0\B\B$ in both tables. If $r''(1)=\C$ we may exchange $r''$ and $r_{\A\B}$ on column indices $1$ and $3$, obtaining $0\B\B$ in both tables. If $r''(1)=r''(2)=0$, we can make a quadratic move between $r''$ and $r$. Similarly, if $r''(1)=0$ and $r''(2)=\C$ we can make an exchange with $r_{\A\B}$. 
If $r''(1)=\B$ and $r''(2)=0$ we first exchange it with $r_{\A\B}$ on column indices $2,3$, then we apply $\A 0\B+\B\B\A=\B 0\A+\A\B\B$. Finally, if $r''(1)=\B$ and $r''(2)=\C$, we apply the cubic move $$000+\B\B\A+\B\C\B=0\B\B+\B\C 0+\B 0\A.$$
%
%
%
%

\noindent {\bf Step 2}: Suppose there is no $\B$ in $r_{0\B}$; without loss of generality we may assume we have $\A$ and $\C$ in columns $3,4$. In column $3$, in row $r_{\A\B}$ of $T_1$ we cannot have $\A$ by Lemma \ref{lem:agreeon0}; moreover, we cannot have $\B$ by the Standard Lemma \ref{standard argument} applied to table $T_0$, as we would produce $\B$ in the row $r_{0\B}$, contradicting Lemma \ref{lem:agreeon0}. Thus we have $0$ in column $3$ in the row $r_{\A\B}$, since $\C$ is excluded in row $r_{\A\B}$ by the claim in the very first part of the proof. Since the disagreement string has length at most three by Corollary \ref{Reduction of four}, we have the following tables $T_0$ and $T_1$: 
$$
T_0-T_1=\begin{bmatrix}
\A & \A  & 0 & 0 & 0 &  \ldots & 0 & 0 \\
0 & \B & \A & \C & x &  \ldots & 0 &0\\
\ldots & \ldots & \ldots &\ldots &\ldots &\ldots &  \C &\C  \\
\ldots & \ldots & \ldots &\ldots &\ldots  & \ldots & \ldots &\ldots\\
\end{bmatrix}-\begin{bmatrix}
0 & 0 & 0 & 0 & 0 &  \ldots & 0 & 0 \\
\A & \B & 0 & y & z   & \ldots& 0&0 \\
\ldots & \ldots & \ldots & \ldots &\ldots& \ldots &\ldots & \ldots \\
\ldots & \ldots & \ldots & \ldots &\ldots& \ldots &\ldots & \ldots \\
\end{bmatrix}.
$$
\noindent Furthermore $\{y,z\}=\{\A,\B\}$. By the disagreement string length, we have $x=0$. Consider the group morphism $\phi_{\C}:G\rightarrow \ZZ_2$ and apply it to columns $3,4,5$. Note that the evaluation of $r_{0\B}$ under $\phi_{\C}$ in column indices $3,4,5$ is the $0/1$ vector $(1,0,0)$. We claim that no row of $T_0$ can differ by more than one element with respect to $r_{0\B}$ in column indices $3,4,5$. Indeed, suppose a row $r$ in $T_1$ differs on $i,j\in \{3,4,5\}$. Then $r(i)+r(j)-r_{0\B}(i)-r_{0\B}(j)\in\{0,\C\}$. Thus, by the Standard Lemma \ref{standard argument}, we can make a quadratic move on $i,j$ and conclude by Difference Lemma \ref{lem:difference}. By double counting, there must exist a row $r'$ in $T_1$ such that $r'(3)\in \{\A,\B\}$ and $r'(4), r'(5)\in \{0,\C\}$. By a quadratic move and the claim at the very first part of the proof, we may assume $r'(4)=r'(5)$. Now we can make a quadratic move between $r'$ and $r_{\A\B}$ involving the entry in column $3$ and the entry in either column $4$ or $5$. However, we may conclude as in the first part of {\bf Step 2}. 
\end{proof}

\begin{lemma}\label{lem:noacinT0}
We may assume that under the agreement string no row in $T_0$ has $\A\C$ or $\C\A$ (resp.~$\B\C$ or $\C\B$).
\end{lemma}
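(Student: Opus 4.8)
## Proof plan for Lemma \ref{lem:noacinT0}

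The plan is to mimic the structure of the preceding lemmas that eliminated strings in columns $n-1,n$: introduce a suitable counting function, use it to locate a convenient row in the ``opposite'' table, and then exhaust the cases for its first two entries using quadratic and cubic moves together with the Difference Lemma \ref{lem:difference}, the Standard Lemma \ref{standard argument}, Lemma \ref{lem:agreeon0}, Lemma \ref{lem:differ not by c} and the already-proven lemmas of this subsection. By the symmetry recorded in Remark \ref{rem:symmetry} it suffices to treat $T_0$; and by swapping columns $n-1$ and $n$ it suffices to treat the string $\A\C$ (the case $\C\A$ being identical, and the parenthetical $\B\C,\C\B$ cases following by applying the automorphism $\A\leftrightarrow\B$ after reducing to the main case, or directly by the same argument with the roles of $\A$ and $\B$ interchanged, using that $\C$ is the distinguished element of $T_0$).

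First I would fix such a row $r$ of $T_0$ carrying $\A\C$ in columns $n-1,n$ and observe, exactly as in the previous lemma, that $r$ must agree with $r_{0\B}$ on every column index $i$ with $2<i<n-1$: if $r(i)\neq r_{0\B}(i)$ for some such $i$, then since both are flows the difference $r(i)-r_{0\B}(i)$, the excess $\A$, and the excess $\C$ in columns $n-1,n$ must cancel against the first two columns, contradicting Lemma \ref{lem:differ not by c} unless we can first make a move on $i$ together with a nonempty subset of the agreement string and finish by the Difference Lemma \ref{lem:difference}. Hence $r$ differs from $r_{0\B}$ only in columns $1,2,n-1,n$, and since $r$ is a flow we get $r(1)+r(2)=\B+\A+\C=0$, so $\{r(1),r(2)\}$ is one of $\{0,0\}$, $\{\A,\A\}$, $\{\B,\B\}$, $\{\C,\C\}$ with sum $0$ — i.e. $r(1)=r(2)\in\{0,\A,\B,\C\}$... wait, more carefully $r(1)+r(2)=0$ forces $r(1)=r(2)$. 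The string $00$ in columns $1,2$ of $T_0$ is forbidden by Lemma \ref{lem:forbidden configurations}, and $\C\C$ is likewise forbidden by that lemma; so $r$ has $\A\A$ or $\B\B$ in columns $1,2$. If it is $\B\B$, then after a quadratic move in columns $1,2$ exchanging with the first row of $T_0$ we produce a row with $\A\A$ in columns $1,2$ and $\A\C$ still present elsewhere, and more importantly we create the string $\B\B$ available for matching; in either subcase one makes a quadratic move bringing $\A\C$ (or the transform) into a row that agrees with $r_{0\B}$ off columns $1,2$, then a further quadratic or cubic move in columns $1,2,n-1,n$ produces a row starting $\B\B$ or allows the Difference Lemma \ref{lem:difference} to manufacture a repeated row, decreasing the Hamming distance.

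The step I expect to be the genuine obstacle is handling the case where columns $1,2$ of $r$ read $\A\A$: here $r$ looks like $\A\A\,0\cdots 0\,\A\C$, so it differs from the first row of $T_0$ only in columns $n-1,n$, and a naive quadratic move there just re-creates an $\A\A$-starting row rather than finishing. The fix will be to bring in a third row: by compatibility there is a row of $T_0$ with $\C$ in column $n-1$ (or $n$); combining it with $r$ and $r_{0\B}$ (which has $0$ under the agreement string by Lemma \ref{lem:agreeon0}), one applies the Standard Lemma \ref{standard argument} to move the $\C$ onto $r_{0\B}$, contradicting Lemma \ref{lem:agreeon0}, or else the needed third-row entries are themselves constrained by Lemma \ref{lem:differ not by c} and the earlier lemmas of this subsection (no $\A\B,\B\A,\C\C,\A\A,\B\B$ under the agreement string in $T_0$), leaving only finitely many configurations, each dispatched by an explicit cubic or quartic move of the same flavour as $000+\B\B\A+\B\C\B=0\B\B+\B\C 0+\B 0\A$ from Step 1 of the previous lemma. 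I would write out these few explicit moves at the end; everything else is the routine case bookkeeping already rehearsed in the preceding three lemmas.
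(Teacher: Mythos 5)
Your proposal diverges substantially from the paper's proof and has a genuine gap at its core.

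The pivotal step in your plan is the claim that the row $r$ (carrying $\A\C$ in columns $n-1,n$) can be assumed to agree with $r_{0\B}$ on every other column $2<i<n-1$, so that $r$ ``looks like $\A\A\,0\cdots 0\,\A\C$'' once you pin down columns $1,2$. This is where the argument fails: $r_{0\B}$ itself is \emph{not} zero in columns $3,4,5$ in general. Only the agreement string (the last $n-5$ columns) is normalized to zero, while the remaining $\leq 3$ disagreement columns of $r_{0\B}$ can carry $\A,\B,\C$ freely. So even in the most favorable reading of your reduction, $r$ would agree with $r_{0\B}$ in columns $3,4,5$, not be zero there. The subsequent ``finitely many configurations, dispatched by a few explicit moves'' are in fact the bulk of the lemma. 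The paper's proof is organized around exactly this difficulty: it splits into four steps according to whether $\B$ appears in the interior of $r_{0\B}$ and whether $\C$ appears in the interior of $r_{\A\B}$, and in Steps 1--3 uses the quotient homomorphisms $\phi_\C,\phi_\B$ applied to columns $3,4,5$ together with a double-counting argument to locate a row on which a quadratic move can be performed. Your outline, which imitates the preceding (much shorter) $\A\B/\B\A$ lemma, is only adequate for the case $r_{0\B}=0\B 0\cdots 0$, i.e., the analogue of the paper's Step 3, and even there you only sketch the endgame.

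A second, smaller issue: the first paragraph's claim that a disagreement $r(i)\neq r_{0\B}(i)$ for $2<i<n-1$ can always be removed ``by a move on $i$ and a nonempty subset of the agreement string, then the Difference Lemma'' is asserted rather than checked. In the $\A\B$ lemma this works because the two nonzero entries $\A,\B$ under the agreement string give $r(1)+r(2)=\A$, directly contradicting Lemma~\ref{lem:differ not by c}; for $\A\C$ you correctly note that $r(1)+r(2)=0$ survives that lemma, but then you would still need to verify that the Difference Lemma applies in all the residual subcases, which depends on what lives in columns $3,4,5$ of $r_{0\B}$ and $r$ — again the point the proposal has elided.

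In short: the reduction to ``$r$ is $\A\A\,0\cdots 0\,\A\C$'' is not available, and once you account for the possible nonzero entries of $r_{0\B}$ in the disagreement columns, the case analysis you defer as routine bookkeeping is in fact the substance of the proof, handled in the paper by a different mechanism (the $\phi_\C,\phi_\B$ projections and double counting) that does not appear in your plan.
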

\begin{proof}
\noindent {\bf Step 0}: Assume that there exists $\B$ in $r_{0\B}$ and $\C$ in $r_{\A\B}$; without loss of generality we may assume 
that they are in columns $3,4$. In this case the tables are:

$$
T_0-T_1=\begin{bmatrix}
\A & \A  & 0 & 0 &\ldots & \ldots & 0 & 0 \\
0 & \B & \B & 0 &\ldots &\ldots & 0 & 0  \\
\ldots & \ldots & \ldots&\ldots &\ldots &  \ldots & \A (\textnormal{resp.}~\B) & \C   \\
\ldots & \ldots & \ldots & \ldots&\ldots&\ldots&\ldots &\ldots \\
\end{bmatrix}-\begin{bmatrix}
0 & 0 & 0 & 0 & \ldots &  \ldots & 0 & 0 \\
\A & \B & 0 & \C & \ldots & \ldots & 0  & 0\\
\ldots & \ldots & \ldots & \ldots&\ldots&\ldots&\ldots &\ldots \\
\ldots & \ldots & \ldots & \ldots&\ldots&\ldots&\ldots &\ldots \\
\end{bmatrix}. 
$$

\noindent Let $r$ be the row in $T_0$ that contains the string $\A\C$ (resp.~$\B\C$) under the agreement string. By Lemma \ref{lem:agreeon0} and Lemma \ref{lem:differ not by c}, we see that $r(4)=0$. Let $r'$ be a row in $T_0$ such that $r'(4)=\C$. By Lemma \ref{lem:agreeon0}, we can exclude $\C$ under the agreement string. Furthermore, performing a quadratic move, we notice that if $r'$ has $00$ under the agreements string, we could reduce $\A\C$ (resp.~$\B\C$) to $\A0$ (resp.~$\B0$), contradicting the minimality of the number of  {\it bad pairs}. Also, $r'$ cannot have $0\B$ or $\B 0$ (resp.~$0\C$ or $\C 0$) under the agreement string, as we could exchange it with $\A\C$ (resp.~$\B\C$) and conclude as before. Thus, under the agreement string, $r'$ has either the string $0\A$ or $\A 0$ (resp.~$0\B$ or $\B 0$). Now, Lemma \ref{lem:agreeon0} and Lemma \ref{lem:differ not by c} allow us to conclude that $r'(3)=\B$. Hence, the counting function $\B_3-\C_4$ is strictly positive in $T_0$. Let $r''$ be a row in $T_1$ such that $r''(3)=\B$ and $r''(4)\neq \C$. By Lemma \ref{lem:agreeon0}, we may exclude $\A$ in column $4$ in $r''$. Consequently, by Lemma \ref{lem:differ not by c}, $r''$ has either $0$ or $\B$ in column $2$. If $r''(2)=\B$, we obtain the same string $0\B\B0$ in both tables. If $r''(2)=0$, we obtain the  string $\A0\B\C 0$ in $T_1$; we now show we may also obtain it in $T_0$. We discuss this according to the three crucial cases:   
\begin{enumerate}
\item[(i)] {\bf Case I} and {\bf II}: We apply the move 
$\A\A 00+0\B\B 000+\A 0+??\B\C xy=\A 0\B\C+\A\B\B 0xy+0\A+??0000$, where $xy$ is under the agreement string and $x+y=\A$. (resp.~We consider the first two entries of $r'$, which by Lemma \ref{lem:differ not by c} could be: $\A\A$, $\A\B$, $\B\A$, $\B\B$. The last three allow to obtain $\A\B 0\C$ in both tables. As $r'$ must agree on all nonspecified entries with $r_{0\B}$ this contradicts the fact that $r'$ is a flow.); 

\item[(ii)] {\bf Case III}: We apply the move $\A\A 00+0\B\B 000+\B 0+??\B\C xy=\A 0\B\C+\B\A\B 0xy+0\B+??0000$ where $x+y=\A$. (resp.~We proceed as before, noting that we do not use the third row, except for $\B\A$, in which case we obtain $\B\A 0\C$ in both tables). \\

\end{enumerate}

\noindent {\bf Step 1}: Assume there exists $\B$ in  $r_{0\B}$ and no $\C$ in $r_{\A\B}$. The tables are:
$$
T_0-T_1=\begin{bmatrix}
\A & \A  & 0 & 0 & 0 & \ldots & 0 & 0 \\
0 & \B & \B & x & y & \ldots & 0 & 0 \\
\ldots & \ldots & \ldots & \ldots &\ldots &\ldots &\A (\textnormal{resp.}~\B) & \C \\
\ldots & \ldots & \ldots & \ldots& \ldots & \ldots & \ldots &\ldots\\
\end{bmatrix}-\begin{bmatrix}
0 & 0 & 0 & 0 & 0 & \ldots & 0 & 0 \\
\A & \B & 0 & \A & \B & \ldots & 0 & 0 \\
\ldots & \ldots & \ldots & \ldots& \ldots & \ldots & \ldots &\ldots\\
\ldots & \ldots & \ldots & \ldots& \ldots & \ldots & \ldots &\ldots\\
\end{bmatrix}.
$$
As the disagreement string is of length at most three, we must have $x=y$. Further, by Lemma \ref{lem:agreeon0} $x=y=0$ or $x=y=\C$. 
Consider the group morphism $\phi_{\C}:G\rightarrow \ZZ_2$. We claim that after applying $\phi_\C$ to column indices $3,4,5$, no row can differ on more than one index from $\phi_\C((\B,x,x))=(1,0,0)$. Indeed, if a row $\tilde r$ differs on two indices $i,j$, then, by the Difference Lemma \ref{lem:difference}, we may assume $r_{0\B}(i)=\tilde r(i)+\A$ and $r_{0\B}(j)=\tilde r(j)+\B$. The rows $r$ and $\tilde r$ must differ by $\A$ either in column index $i$ or $j$, and by $\B$ on the other. In particular, by reducing the number of {\it bad pairs} $\A\C$ (resp.~$\B\C$) under the agreement string, we exclude the situation when $\tilde r$ has $00$ under the agreement string. By the Difference Lemma \ref{lem:difference}, we also know that $\C$ does not appear in $\tilde r$ under the agreement string. In the same way, if $\A$ or $\B$ appears under the agreement string, we may exchange it along with the index $i$ or $j$, again contradicting Difference Lemma \ref{lem:difference}. By double counting, there exists a row $\tilde r'$ in $T_1$ such that $\phi_\C((\tilde r'(3, 4,5)))=(1,0,0)$. In particular, there exist two indices such that we can make a quadratic move between $\tilde r'$ and $r_{\A\B}$. This either contradicts Lemma \ref{lem:b->0} or one decreases the Hamming distance.\\

\noindent {\bf Step 2}: Assume there is no $\B$ in  $r_{0\B}$ and there exists $\C$ in $r_{\A\B}$. The tables are:
$$
T_0-T_1=\begin{bmatrix}
\A & \A  & 0 & 0 & 0 & \ldots & 0 & 0 \\
0 & \B & \A & \C & 0 & \ldots & 0 & 0 \\
\ldots & \ldots & \ldots & \ldots &\ldots &\ldots & \A (\textnormal{resp.}~\B) & \C \\
\ldots & \ldots & \ldots & \ldots & \ldots & \ldots  & \ldots &\ldots \\
\end{bmatrix}-\begin{bmatrix}
0 & 0 & 0 & 0 & 0 & \ldots & 0 & 0 \\
\A & \B & x & y & \C & \ldots & 0 & 0 \\
\ldots & \ldots & \ldots & \ldots & \ldots & \ldots  & \ldots &\ldots \\
\ldots & \ldots & \ldots & \ldots & \ldots & \ldots  & \ldots &\ldots \\
\end{bmatrix}.
$$
As before $x=y$ equals $\B$ or $0$. Further $r(5)=0$. We apply $\phi_{\B}$ to column indices $3,4,5$.\\
\indent We claim we may assume that no row $\tilde r$ in $T_0$ differs from $\phi_{\B}((\A,\C,0))=(1,1,0)$ on more than one index.
For the sake of the contradiction, suppose there exists $\tilde r$ in $T_0$ differing on $i$ and $j$. If $r_{0\B}(i)-\tilde r(i)=r_{0\B}(j)-\tilde r(j)$, then we make a quadratic move between $r_{0\B}$ and $\tilde r$ on $i,j$. If the difference equals $\A$, we conclude by the Difference Lemma \ref{lem:difference}. Thus we assume the difference equals $\C$. If $5\in\{i,j\}$ we conclude by Lemma \ref{lem:agreeon0}. Hence, $\{i,j\}=\{3,4\}$; on the other hand, this reduces the Hamming distance. Consequently we have $r_{0\B}(i)-\tilde r(i)=\A$ and $r_{0\B}(j)-\tilde r(j)=\C$. Notice that we cannot have $r(i)-\tilde r(i)=r(j)-\tilde r(j)=\A$, thus at least one difference must be equal to $\C$. Hence, we exclude $00$ in $\tilde r$ under the agreement string, as then we could reduce the number of $\A\C$ (resp.~$\B\C$) under the agreement string. Further, $\A$ and $\B$ also cannot appear under the agreement string, as otherwise we may conclude by the Difference Lemma \ref{lem:difference}. Whence $\tilde r$ has $0\C$ or $\C 0$ under the agreement string. By Lemma \ref{lem:agreeon0}, we have $j\neq 5$. Let $\tilde r'$ be a row of $T_0$  with $\tilde r'(5)=\C$. As before, we conclude that $\tilde r'$ has $\A 0$ or $0\A$ under the agreement string (resp.~$0\B$ or $\B 0$), and $\tilde r'(3)=\A$, $\tilde r'(4)=\C$. We now exclude the case $i=5$, i.e.,~$\tilde r(5)=\A$. In such a case, we could exchange $r$ and $\tilde r$ on column $5$ and under the agreement string; then with $\tilde r'$ on column indices $5$ and $j$; finally with $r_{0\B}$ on column indices $5$ and the last entry to conclude by Lemma \ref{lem:agreeon0}. (Resp.~We apply the relation on $5$ and the agreement string $000+\A 0\C+\C 0\B=\C 0 \C+0 0 \B+\A 00$.) \\
\indent In conclusion, our discussion leads to $\{i,j\}=\{3,4\}$ and $\tilde r(5)=0$. However, we may exchange $\tilde r$ with $\tilde r'$ on $5$ and $j$. Consequently we exchange with $r_{0\B}$ on $5$ and under the agreement string to conclude by Lemma \ref{lem:agreeon0}. This concludes the verification of our claim. \\
\indent By the claim, there must exist a row $r''$ in $T_1$, such that $\phi_{\B}(r''((3,4,5)))=(1,1,0)$. On two of these indices, $r''$ differs from $r_{\A\B}$ by the same element: either $\A$ or $\C$. We can make a quadratic move on these two column indices and conclude by Difference Lemma \ref{lem:difference}.\\

\noindent {\bf Step 3}: Assume there is no $\B$ in  $r_{0\B}$ and no $\C$ in $r_{\A\B}$. The tables are:

$$
T_0-T_1=\begin{bmatrix}
\A & \A  & 0 & 0 & 0 & \ldots & 0 & 0 \\
0 & \B & \A & \C & 0 & \ldots & 0 & 0 \\
\ldots & \ldots & \ldots & \ldots &\ldots &\ldots & \A (\textnormal{resp.}~\B) & \C \\
\ldots & \ldots & \ldots & \ldots& \ldots & \ldots& \ldots &\ldots \\
\end{bmatrix}-\begin{bmatrix}
0 & 0 & 0 & 0 & 0 & \ldots & 0 & 0 \\
\A & \B & x & y & z & \ldots & 0 & 0 \\
\ldots & \ldots & \ldots & \ldots& \ldots & \ldots& \ldots &\ldots \\
\ldots & \ldots & \ldots & \ldots& \ldots & \ldots& \ldots &\ldots \\
\end{bmatrix}.
$$
\noindent Suppose $x=\B$. Let $\tilde r$ be a row of $T_0$ with $\tilde r(3)=\B$. As in the previous steps, we may assume that $\tilde r$ has $0\A$ or $\A 0$ (resp.~$0\B$ or $\B 0$) under the agreement string and $\tilde r(i)=r_{0\B}(i)$ for $4\leq i \leq n-3$. By Lemma \ref{lem:differ not by c}, we have $\tilde r(1,2)=\A\A$ or $\B\B$ (resp.~ $\tilde r(1,2)=\A\B$ or $\B\A$; we may obtain $0\B\B$ in both tables by the move: $\A\A0+0\B\A\C00+(\A\B/\B\A)\B\C 0\B=0\B\B+(\A\B/\B\A)0\C 00+\A\A\A\C0\B$). However, $\B\B$ easily leads to $0\B\B$ in both tables by the cubic move $\A\A 0+0\B\A+\B\B\B = 0\B\B+\B\A 0+\A\B\A$ in $T_0$. Furthermore, we may assume that $\B\B$ does not appear on column indices $1,2$ in any row in $T_0$, otherwise we would exchange with $\tilde r$ obtaining $\B\B\B$ in columns $1,2,3$. It follows that $\A_{12}-0_3-\B_3$ is positive on $T_0$. However, a positive row in $T_1$ contradicts Lemma \ref{lem:differ not by c}.\\
\indent Thus we may assume $x=0$. Without loss of generality $\{y,z\}=\{\A,\B\}$. We apply the homomorphism $\phi_\C$ to column indices $3,4,5$. We prove that no row may differ on two indices from $\phi_{\C}(r_{0\B}(3,4,5))=(1,0,0)$ in $T_0$. This is analogous to {\bf Step 1}. Whence there exists a row $\tilde r$ in $T_1$, such that $\phi_{\C}(\tilde r((3,4,5)))=(1,0,0)$. We may assume $\tilde r(4)=\tilde r(5)$, as otherwise we can make a quadratic move on column indices $4,5$ and conclude by previous steps. However, in such a case we may exchange $\tilde r$ with $r_{0\B}$ (on column index $3$ and on column index either $4$ or $5$), conclude by Lemma \ref{lem:agreeon0} or reduce to the first part of this step, where we assume $x=\B$.
\end{proof}

\begin{lemma}
We may assume that under the agreement string no row in $T_1$ has $\A\B$ or $\B\A$ (resp.~$\B\C$ or $\C\B$).
\end{lemma}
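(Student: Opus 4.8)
The plan is to treat the three crucial cases of $(\star)$ separately. In \textbf{Case I} and \textbf{Case III} the statement is just the mirror image of Lemma~\ref{lem:noacinT0} under the symmetry recorded in Remark~\ref{rem:symmetry}: the flow $(\A,\A,0,\dots,0)\in\mathfrak G$ is supported on columns $1,2$ and so acts trivially on every column of the agreement string, while the automorphism $\B\leftrightarrow\C$ acts entrywise, so the symmetry carries the strings $\A\C,\C\A$ (resp.~$\B\C,\C\B$), forbidden in $T_0$ under the agreement string by Lemma~\ref{lem:noacinT0}, precisely to the strings $\A\B,\B\A$ (resp.~$\C\B,\B\C$) that must be forbidden in $T_1$. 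Hence the lemma reduces to \textbf{Case II}.

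In \textbf{Case II} the tables $T_0$ and $T_1$ are genuinely not symmetric, so Remark~\ref{rem:symmetry} is unavailable and one must argue directly, reproducing the four-step scheme of Lemma~\ref{lem:noacinT0} with the two tables interchanged and $\B,\C$ swapped. Suppose a row $r$ of $T_1$ carries the string $\A\B$ under the agreement string; after the action of $\mathfrak S_n$ we may place it in columns $n-1,n$. The case $\B\A$ follows by transposing those two columns, and the companion case $\B\C,\C\B$ by the identical argument with $\A$ replaced by $\C$. As in Lemma~\ref{lem:noacinT0}, I would split into four steps according to whether $r_{\A\B}$ contains another copy of $\B$ outside columns $1,2$ and whether $r_{0\B}$ contains a copy of $\C$ outside columns $1,2$; Corollary~\ref{Reduction of four} bounds the disagreement string of $r_{0\B}$ and $r_{\A\B}$ outside columns $1,2$ by three, which determines the unspecified entries of the first rows in each step. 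In each step the recipe is the same as in Lemma~\ref{lem:noacinT0}: first use compatibility of $T_0,T_1$ together with a counting function --- in the harder steps the quotient homomorphism $\phi_{\C}$ or $\phi_{\B}$ restricted to three suitable columns --- to locate a row differing from $r_{0\B}$ (or from $r_{\A\B}$) by the distinguished element; then feed this into the Difference Lemma~\ref{lem:difference} and the Standard Lemma~\ref{standard argument}, the latter to move the distinguished entry of the agreement string into $r_{0\B}$ or $r_{\A\B}$, and invoke Lemma~\ref{lem:agreeon0}, Lemma~\ref{lem:differ not by c} and Lemma~\ref{lem:b->0} to conclude that either the Hamming distance drops or the string $\A\B$ is rewritten as $\A 0$, contradicting the minimality of the number of bad pairs. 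Every move used is a quadratic, cubic or quartic move already appearing in the steps of Lemma~\ref{lem:noacinT0}, transported along the exchange $T_0\leftrightarrow T_1$ and the swap $\B\leftrightarrow\C$.

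The hard part will be the step analogous to \textbf{Step 3} of Lemma~\ref{lem:noacinT0}, where $r_{0\B}$ has no spare $\C$ and $r_{\A\B}$ no spare $\B$ away from the first two columns: here no witness row is immediately available, so one introduces $\phi_{\C}$ on columns $3,4,5$, shows via the Difference Lemma~\ref{lem:difference} that no row can differ from $r_{0\B}$ on two of these three columns, and extracts a witness by double counting; the delicate bookkeeping is to check, case by case on the first two entries of that row, that the resulting combination of moves stays in degree at most four and indeed lowers the Hamming distance or the number of bad pairs. Once this last lemma is proved, every row of $T_0$ and of $T_1$ has a $0$ in column $n-1$ or in column $n$ --- that is, the tables contain no bad pair --- and Theorem~\ref{n->n-1 by badpairs} reduces the main case to $n-1$ leaves, so the outermost induction on $n$ finishes the proof of the main theorem.
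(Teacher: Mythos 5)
Your reduction of \textbf{Case I} and \textbf{Case III} to Lemma \ref{lem:noacinT0} via the flow $(\A,\A,0,\dots,0)$ and the automorphism $\B\leftrightarrow\C$ is exactly the paper's argument, and your bookkeeping of how the forbidden strings transform is right. The gap is in \textbf{Case II}. You assert that one can ``reproduce the four-step scheme of Lemma \ref{lem:noacinT0} with the two tables interchanged and $\B,\C$ swapped'' and that ``every move used is \ldots already appearing in the steps of Lemma \ref{lem:noacinT0}, transported along the exchange.'' That transport claim is precisely what fails, and it fails exactly where Lemma \ref{lem:noacinT0} uses more than the rows $r_{0\B}$ and $r_{\A\B}$: in \textbf{Case II} the third rows are $r_{\A 0}$ in $T_0$ and $r_{\B\A}$ in $T_1$, and these are \emph{not} exchanged by the symmetry (applying the flow and $\B\leftrightarrow\C$ to them produces $0\A$ and $\B 0$, i.e.\ a mixed configuration outside Cases I--III). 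The proof of \textbf{Step 0} of Lemma \ref{lem:noacinT0} explicitly invokes the third row (the moves $\A\A 00+0\B\B 000+\A 0+??\B\C xy=\dots$ in Cases I, II and $\dots+\B 0+\dots$ in Case III), so its transported version is simply not available in Case II, and no argument replacing it appears in your proposal. This is the content the paper has to supply from scratch: a bespoke analysis of the configuration with $\B$ in $r_{0\B}$ and $\C$ in $r_{\A\B}$, using the counting function $\B_3-\C_4$, a case analysis on auxiliary rows, and new moves that use $r_{\A 0}$ and $r_{\B\A}$, ending with the quadratic move $\A\B 0\C+?0\B=\A 0\B\C+?\B 0$.

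Relatedly, you have misplaced the difficulty: you flag the analogue of \textbf{Step 3} as the hard part, but Steps 1, 2 and 3 of Lemma \ref{lem:noacinT0} use only the rows $r_{0\B}$ and $r_{\A\B}$, which \emph{are} correctly matched by the symmetry even in Case II, so they carry over verbatim after translation --- the paper's proof consists of exactly this observation plus the new \textbf{Step 0} argument. As it stands, your Case II portion is a plan resting on an unjustified (and, for Step 0, false) transport principle rather than a proof, so the lemma is not established.
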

\begin{proof}

Let us act on tables $T_0$, $T_1$ by the flow $(\A,\A, 0,\dots,0)\in \mathfrak G$ and then apply the group automorphism $\B\leftrightarrow \C$. This translates {\bf Case I} and {\bf Case III} to {\bf Case III} and {\bf Case I} of Lemma \ref{lem:noacinT0} respectively; cf.~Remark \ref{rem:symmetry}. However, {\bf Case II} is not transformed to the previous cases, due to the rows $r_{\A 0}$ in $T_0$ and $r_{\B\A}$ in $T_1$. We note that in {\bf Steps 1}, {\bf 2}, and {\bf 3} of Lemma \ref{lem:noacinT0} we are only using the rows $r_{0\B}$ in $T_0$ and $r_{\A\B}$ in $T_1$ that still appear after translating {\bf Case II}. \\
\indent Thus, we only need to conclude in {\bf Case II} and {\bf Step 0}, i.e.,~there exists $\B$ in $r_{0\B}$ and $\C$ in $r_{\A\B}$. Without loss of generality, we may assume that they are in columns $3,4$. The tables are:

$$
T_0-T_1=\begin{bmatrix}
\A & \A  & 0 & 0 &\ldots & 0 & 0 \\
0 & \B & \B & 0 &\ldots & 0 & 0 \\
\A & 0 & \ldots &\ldots &\ldots &\ldots &\ldots \\
\ldots & \ldots & \ldots&\ldots &\ldots &\ldots&\ldots \\
\end{bmatrix}-\begin{bmatrix}
0 & 0 & 0 & 0 & \ldots&0 & 0  \\
\A & \B & 0 & \C &\ldots & 0  & 0 \\
\B &\A &\ldots &\ldots &\ldots &\ldots &\ldots \\
\ldots & \ldots & \ldots & \ldots&\ldots&\A (\textnormal{resp.}~\C) & \B \\
\end{bmatrix}. 
$$

\noindent Let $r$ be the row in $T_1$ with a bad pair of the form $\A\C$ (resp.~$\B\C$). First we exclude $r(3)=\A,\B,\C$ by quadratic exchange with $r_{\A\B}$, and the Difference Lemma \ref{lem:difference} and Lemma \ref{lem:agreeon0}. Let $\tilde r$ be the row in $T_1$ such that $\tilde r(3)=\B$. 
We note that if $\tilde r(n-1)=\tilde r(n)=0$ then, exchanging with $r$ we could reduce the number of bad pairs. Moreover, by Lemma \ref{lem:agreeon0} we know that $\tilde r(n-1),\tilde r(n)\neq \B$. 
Furthermore, as we already know that $r(3)$ must be equal to zero, we have $\tilde r(n-1)+\tilde r(n)\neq r(n-1)+r(n)$. Thus, we must have $\{\tilde r(n-1),\tilde r(n)\}=\{0,\A\}$ (resp.~$\{\tilde r(n-1),\tilde r(n)\}=\{0,\C\}$).
Note that $\tilde r(2)=\B$ gives $0\B\B$ in both tables, thus we may assume $\tilde r(2)=0$, by Lemma \ref{lem:differ not by c}. 
Moreover, we have $\tilde r(4)=\C$ and hence the counting function $\B_3-\C_4$ is negative on $T_1$. Let $r'$ be the row in $T_0$ on which the function is negative, i.e.,~$r'(4)=\C$ and $r'(3)\neq \B$. Now, $r'(3)\neq \A$, as otherwise we exchange $r'$ and $r_{0\B}$ and conclude by Lemma \ref{lem:agreeon0}. Thus, by Lemma \ref{lem:differ not by c}, we have $r'(1),r'(2)\in \{\A,\B\}$. If $r'(2)=\B$ then we obtain $\A\B0\C$ in both tables, thus we may assume $r'(2)=\A$. We may obtain the flow $0\A\B\C$ in $T_0$, by exchanging $r'$ and $r_{0\B}$, and $\A 0\B\C$ in $T_0$, by exchanging with $r_{\A 0}$. We finish the proof by showing that we may obtain the latter 
in $T_1$, by the 
quadratic move $\A\B 0\C +? 0\B=\A 0\B\C+?\B 0$.
\end{proof}

\section{Appendix}\label{sec:App}

We present known algebraic results for algebras over monoids that are cones over normal lattice polytopes. Much more information can be found in \cite{BrGub, Eis, MS04, Stks, Vas04}.\\
\indent Let $M$ be a lattice and $P\subset \{1\}\times M\subset \ZZ\times M$ be a \emph{normal} lattice polytope generating the ambient lattice. Let $C(P)\subset \ZZ\times M$ be the cone over $P$. The cone $C(P)$, equipped with addition, has a natural structure of a graded monoid, with the grading induced by the first coordinate. The algebraic properties of the graded algebra $\CC[C(P)]$ are strongly related to combinatorial properties of $P$.
\begin{proposition}
The function $H_P:\ZZ_{\geq 0}\rightarrow\ZZ$ defined by $H_P(n)=|nP\cap \{n\}\times M|$ is a polynomial known as \textnormal{Ehrhart polynomial}. For all $n\geq 0$, it coincides with the Hilbert function (and hence with the Hilbert polynomial) of the algebra $\CC[C(P)]$. Moreover, it satisfies the \emph{Ehrhart reciprocity}, i.e.~$|H_P(-n)|=|\textnormal{int}(nP)\cap \{n\}\times M|$ for $n>0$, where $\textnormal{int}$ denotes the interior points of the polytope.
\end{proposition}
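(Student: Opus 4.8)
The plan is to deduce both assertions from the normality hypothesis together with classical structural facts about normal affine monoid algebras; write $d=\dim P$, so that $C(P)$ and the algebra $A:=\CC[C(P)]$ have Krull dimension $d+1$. First I would identify $H_P$ with a Hilbert function. Since $P$ is normal and its lattice points generate $\ZZ\times M$, every lattice point of $C(P)$ at height $n$ is a sum of $n$ lattice points at height one; this is exactly the content of normality. Hence the monoid $C(P)\cap(\ZZ\times M)$ is generated in degree one by the finite set $\{1\}\times(P\cap M)$, so $A$ is a \emph{standard} graded algebra whose degree-$n$ component has $\CC$-basis the monomials $x^{(n,v)}$ with $v\in nP\cap M$. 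Thus $\dim_{\CC}A_n=|nP\cap(\{n\}\times M)|=H_P(n)$, i.e.\ $H_P$ is literally the Hilbert function of $A$.

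Next I would establish polynomiality. Being a finitely generated standard graded algebra of Krull dimension $d+1$, the ring $A$ has rational Hilbert series $\mathrm{Hs}_A(t)=\sum_{n\ge0}H_P(n)t^n=N(t)/(1-t)^{d+1}$ for some $N\in\ZZ[t]$, and the crucial point is that $\deg N\le d$, strictly below the pole order $d+1$ --- equivalently, that the $h^*$-vector of $P$ has length at most $d+1$, which holds for every lattice polytope. Expanding $N(t)/(1-t)^{d+1}$ as a $\ZZ$-linear combination of the series $\sum_{n\ge0}\binom{n+d-j}{d}t^n$ with $0\le j\le d$, one reads off that $H_P(n)$ agrees with one fixed polynomial of degree $d$ for \emph{every} $n\ge0$, with no exceptional small values; this is Ehrhart's theorem in the present setting, and coincidence with the Hilbert polynomial of $A$ is then immediate.

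For reciprocity I would pass to the canonical module. By Hochster's theorem the normal monoid algebra $A$ is Cohen--Macaulay, and by the Danilov--Stanley description its graded canonical module is $\omega_A=\bigoplus_{n\ge1}\bigoplus_{v\in\textnormal{int}(nP)\cap M}\CC\,x^{(n,v)}$, the ideal spanned by the interior lattice points. Graded local duality then gives the identity of rational functions $\mathrm{Hs}_{\omega_A}(t)=(-1)^{d+1}\,\mathrm{Hs}_A(1/t)$. Combining this with the reciprocity law for rational generating functions --- if $\sum_{n\ge0}f(n)t^n=R(t)$ with $f$ a polynomial, then $\sum_{n\ge1}f(-n)t^n=-R(1/t)$ --- together with the polynomiality just proved, one finds that $n\mapsto|\textnormal{int}(nP)\cap M|$ agrees, for $n\ge1$, with $(-1)^d$ times the polynomial extending $H_P$ evaluated at $-n$. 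Taking absolute values yields $|H_P(-n)|=|\textnormal{int}(nP)\cap(\{n\}\times M)|$ for $n>0$.

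The substantive input is the reciprocity half: it rests on the non-trivial facts that normal affine monoid algebras are Cohen--Macaulay (Hochster) and that their canonical module is the ideal generated by the interior lattice points (Danilov--Stanley; see \cite{BrGub}), together with the translation of graded local duality into the stated symmetry of Hilbert series. By contrast, the polynomiality half is essentially the elementary observation that a standard graded algebra has a rational Hilbert series whose numerator degree lies below the order of its pole at $t=1$.
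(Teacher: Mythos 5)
Your argument is correct. Note, however, that the paper offers no proof of this proposition at all: it is stated in the Appendix as a known result, with the reader referred to \cite{BrGub, Eis, MS04, Stks, Vas04}. What you have written is essentially the standard proof found in those sources (Stanley's commutative-algebra route to Ehrhart reciprocity): normality makes $\CC[C(P)]$ standard graded with $\dim_\CC A_n=|nP\cap(\{n\}\times M)|$, Hochster gives Cohen--Macaulayness, Danilov--Stanley identifies $\omega_A$ with the interior ideal, and graded local duality plus the elementary reciprocity of rational generating functions yields $|H_P(-n)|=|\textnormal{int}(nP)\cap(\{n\}\times M)|$. One small stylistic refinement: your appeal to ``the $h^*$-vector of $P$ has degree at most $d$'' to get $\deg N\le d$ is a quotation of a fact essentially equivalent to the polynomiality you are in the middle of proving; it is cleaner (and closer to the paper's own subsequent corollary on the $a$-invariant) to deduce $\deg N\le d$ from the machinery you already invoke, since $\omega_A$ lives in degrees $\ge 1$ and local duality gives $a(A)=-\min\{n:\omega_n\neq 0\}<0$, which is exactly the statement that the Hilbert function agrees with the Hilbert polynomial for all $n\ge 0$.
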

We immediately see that the polynomial $H_P(n)$ may agree with the Hilbert function even for negative $n$. This happens if and only if $H_P(n)=0$, as the algebra is positively graded. 
\begin{definition}[{\bf $a$-invariant, Hilbert regularity}]
The $a$-invariant $a(A)$ of an algebra $A$ is the largest integer $a$ such that the Hilbert function differs from the Hilbert polynomial. Hilbert regularity equals the $a$-invariant plus one.
\end{definition}
\begin{corollary}
The $a$-invariant of $\CC[C(P)]$ is always negative. It equals $-n$ for the smallest $n\in \ZZ_{>0}$ such that $nP$ contains an interior point.
\end{corollary}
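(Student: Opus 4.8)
The plan is to read the statement off directly from the Ehrhart reciprocity formula together with the fact, recorded above, that the Hilbert function of $\CC[C(P)]$ vanishes in negative degrees. Since $\CC[C(P)]$ is positively graded, its Hilbert function $\mathrm{HF}(n)$ equals $H_P(n)$ for $n\geq 0$ and equals $0$ for $n<0$, while its Hilbert polynomial is $H_P(n)$ for every $n\in\ZZ$. Consequently $\mathrm{HF}$ and $H_P$ agree on all of $\ZZ_{\geq 0}$, so every integer $a$ at which they differ is negative; in particular $a(\CC[C(P)])\leq -1<0$, which is the first assertion.

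For the second assertion I would examine the negative degrees. For $m>0$ one has $\mathrm{HF}(-m)=0$, so $\mathrm{HF}$ and $H_P$ differ at $-m$ precisely when $H_P(-m)\neq 0$. By Ehrhart reciprocity, $|H_P(-m)|=|\textnormal{int}(mP)\cap(\{m\}\times M)|$, hence $H_P(-m)\neq 0$ if and only if $mP$ contains an interior lattice point. Therefore
\[
a(\CC[C(P)])=\max\{\,-m\ :\ m\in\ZZ_{>0},\ mP\text{ has an interior lattice point}\,\}=-n_0,
\]
where $n_0$ is the smallest positive integer such that $n_0P$ contains an interior lattice point: for $0<m<n_0$ the dilate $mP$ has no interior lattice point, so $H_P(-m)=0$ and there is no disagreement at $-m$, whereas at $-n_0$ there is.

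It then remains only to observe that $n_0$ is well defined, i.e.\ that the set above is nonempty. Since $P$ generates the ambient lattice $\ZZ\times M$, it is a full-dimensional lattice polytope inside the affine hyperplane $\{1\}\times M$, and any full-dimensional lattice polytope has a dilate containing an interior lattice point (for instance $(\dim P+1)P$ contains the sum of the vertices of any full-dimensional sub-simplex of $P$, which lies in the interior). Thus $\{m\in\ZZ_{>0}:mP\text{ has an interior lattice point}\}$ is a nonempty set of positive integers, admits a least element $n_0$, and $a(\CC[C(P)])=-n_0<0$. There is no genuine obstacle here; the only points worth double-checking are the vanishing of $\mathrm{HF}$ in negative degrees and the equivalence between $H_P(-m)\neq 0$ and the existence of an interior lattice point in $mP$, both of which are immediate from the Proposition and Ehrhart reciprocity recalled above.
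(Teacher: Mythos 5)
Your proposal is correct and follows exactly the paper's intended reasoning: the corollary is a direct consequence of the preceding Proposition together with the remark immediately after it, namely that $\CC[C(P)]$ is positively graded (so its Hilbert function vanishes in negative degrees) and that, by Ehrhart reciprocity, $H_P(-m)\neq 0$ precisely when $mP$ has an interior lattice point. Your added observation that some dilate of $P$ must contain an interior lattice point (so $n_0$ is well defined) is a small but welcome piece of rigor the paper leaves tacit.
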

\begin{proposition}
If $\dim P=d$, then $\sum_{j=0}^\infty H_P(j)t^j=h(t)/(1-t)^{d+1}$ for some polynomial $h$. The $a$-invariant of $\CC[C(P)]$ equals $\deg h -d-1$. 
\end{proposition}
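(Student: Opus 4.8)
The statement combines a structural fact about the generating function of $H_P$ with the identification of the $a$-invariant, and I would prove the two in turn, using only the first Proposition of the Appendix (that $H_P$ is the Hilbert polynomial of $A:=\CC[C(P)]$, a polynomial of degree $d=\dim P$, coinciding with the Hilbert function of $A$ in all degrees $n\ge 0$) together with elementary power-series manipulations.

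First, the rational form. Since the binomials $\binom{n+j}{j}$, $0\le j\le d$, form a $\QQ$-basis of the polynomials in $n$ of degree $\le d$, I would write $H_P(n)=\sum_{j=0}^{d}c_j\binom{n+j}{j}$ with $c_j\in\QQ$ and $c_d\neq 0$ (as $\deg H_P=d$). Multiplying by $t^n$, summing over $n\ge 0$ and using $\sum_{n\ge 0}\binom{n+j}{j}t^n=(1-t)^{-(j+1)}$ gives
\[
\sum_{j\ge 0}H_P(j)t^j=\sum_{j=0}^{d}\frac{c_j}{(1-t)^{j+1}}=\frac{h(t)}{(1-t)^{d+1}},\qquad h(t):=\sum_{j=0}^{d}c_j(1-t)^{d-j},
\]
so $h$ is a polynomial of degree $\le d$; its constant term is $h(0)=H_P(0)=1$ (the dilate $0\cdot P$ is a single point), so $h\neq 0$. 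This settles the first assertion and records the bound $\deg h\le d$, which is used below.

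Second, the $a$-invariant. Write $e:=\deg h$ and $h(t)=\sum_{i=0}^{e}h_it^i$ with $h_e\neq 0$, and put $p(k):=\binom{k+d}{d}$, a degree-$d$ polynomial that vanishes exactly at $k=-1,\dots,-d$ and is nonzero for every $k\le-(d+1)$ (indeed $p(-d-1)=\binom{-1}{d}=(-1)^d$). Expanding $\dfrac{h(t)}{(1-t)^{d+1}}=\bigl(\sum_i h_it^i\bigr)\bigl(\sum_{m\ge 0}p(m)t^m\bigr)$ shows that the Hilbert function of $A$ is $\dim_\CC A_n=\sum_{i=0}^{e}h_i\binom{n-i+d}{d}$ with the convention $\binom{\ell+d}{d}:=0$ for $\ell<0$, whereas the Hilbert polynomial is the honest polynomial $H_P(n)=\sum_{i=0}^{e}h_i\,p(n-i)$. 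For $n\ge e-d$ every argument $n-i$ (with $0\le i\le e$) is $\ge -d$, so each summand either has $n-i\ge 0$, where the two expressions agree, or $n-i\in\{-1,\dots,-d\}$, where both $p(n-i)$ and the truncated binomial are $0$; hence $\dim_\CC A_n=H_P(n)$ for all such $n$. At $n=e-d-1$, instead, all summands with $i<e$ still match (their arguments lie in $\{-1,\dots,-d\}$ or are $\ge 0$), but the $i=e$ summand contributes $h_e\,p(-d-1)=(-1)^dh_e\neq 0$ to $H_P$ and $0$ to $\dim_\CC A_{e-d-1}$. Therefore $e-d-1$ is precisely the largest integer at which the Hilbert function and the Hilbert polynomial disagree, i.e.\ $a(A)=\deg h-d-1$; note this is $\le -1$ since $\deg h\le d$, consistently with the preceding Corollary. (Alternatively one can deduce $a(A)=\deg h-d-1$ from that Corollary and Ehrhart reciprocity, by identifying $\deg h$ with $d+1$ minus the least dilate of $P$ carrying an interior lattice point.)

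The computation is routine; the only point requiring care is the bookkeeping at the interface between the genuine Hilbert function — which is identically $0$ in negative degrees — and its polynomial continuation $H_P$, that is, tracking precisely which binomial coefficients get truncated to $0$ and which survive. Once the basis $\{\binom{n+j}{j}\}$ and the vanishing pattern of $p$ are in place, there is nothing further to do.
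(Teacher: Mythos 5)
The paper states this Proposition without proof, treating it as standard material from combinatorial commutative algebra and deferring to the textbook references in the Appendix; so there is no paper argument to compare line by line. Your proof is correct and self-contained. It is worth pointing out, however, that the route the paper implicitly has in mind is a little different: judging from the preceding Corollary and the remark immediately following the Proposition (that $d+1-\deg h$ is the smallest dilate of $P$ containing an interior lattice point), the intended derivation is to first use Ehrhart reciprocity to locate the last integer at which $H_P$ is nonzero but the Hilbert function vanishes, obtaining the Corollary, and then read off $a(A)=\deg h - d - 1$ from the interior-point characterization. Your main argument instead does the bookkeeping directly on the power-series side, matching the truncated binomial coefficients against the polynomial $p(k)=\binom{k+d}{d}$ and exploiting its vanishing pattern on $\{-1,\dots,-d\}$ and its nonvanishing at $-d-1$; this avoids Ehrhart reciprocity altogether and uses nothing beyond the expansion $1/(1-t)^{d+1}=\sum_m \binom{m+d}{d}t^m$. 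You also record the Ehrhart-reciprocity route in your closing parenthetical, so both derivations are accounted for. One small stylistic point: the bound $\deg h\le d$ that you establish in the first paragraph is genuinely needed in the second (to ensure $e-d-1\le -1$ so that the Hilbert function there is honestly zero), and you do flag this, which is good.
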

We note that $d+1-\deg h$ is the smallest dilation of $P$ that contains an interior lattice point. 
\begin{proposition}[{\bf Hochster's Theorem}]
The algebra $\CC[C(P)]$ is Cohen-Macaulay.
\end{proposition}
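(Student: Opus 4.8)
The statement to prove is Hochster's classical theorem that normal affine semigroup rings are Cohen--Macaulay, so the plan is to recall how that argument runs in the case at hand. The first step is to translate the hypotheses. Since $P$ is a normal lattice polytope generating $\ZZ\times M$, the saturation property says that the monoid $C(P)\cap(\ZZ\times M)$ coincides with the monoid generated by $\{1\}\times(P\cap M)$; equivalently $C(P)\cap(\ZZ\times M)=\sigma\cap N$, where $N=\ZZ\times M$ and $\sigma=\RR_{\ge 0}P$ is a pointed rational cone of full dimension $d+1$ in $N_{\RR}$, $d=\dim P$. Thus $R:=\CC[C(P)]$ is a \emph{normal} affine semigroup ring of Krull dimension $d+1$, and what has to be shown is $\operatorname{depth}_{\mathfrak m}R=d+1=\dim R$, where $\mathfrak m\subset R$ is the maximal monomial ideal; equivalently $H^{j}_{\mathfrak m}(R)=0$ for every $j\le d$.

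The main step is to compute the $N$-graded local cohomology $H^{\bullet}_{\mathfrak m}(R)$ via the \v{C}ech (Ishida) complex $0\to R\to\bigoplus_{\dim F=1}R_F\to\cdots\to\bigoplus_{\dim F=d+1}R_F\to 0$, indexed by the faces $F\preceq\sigma$, where $R_F$ is the localization of $R$ obtained by inverting the monomials supported on $F$. Since $R$ is normal each $R_F$ is again a semigroup ring, with support the localization cone $\sigma-F$, and in every fixed degree $a\in N$ the complex collapses to the simplicial cochain complex of the subcomplex $\Gamma_a:=\bigcup\{F_j:\langle u_j,a\rangle<0\}$ of the boundary complex of $\sigma$ (a triangulated sphere $S^{d-1}$ after passing to the compact slice $P$), where the $u_j$ are the inner facet normals; one obtains $H^{j}_{\mathfrak m}(R)_{a}\cong\widetilde H^{\,\ast}(\Gamma_a;\CC)$ up to the appropriate degree shift. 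The heart of the argument is then the combinatorial-topological lemma that, for \emph{every} $a\in N$, the subcomplex $\Gamma_a$ is empty, or contractible, or all of $\partial\sigma$; granting this, $\widetilde H^{\,\ast}(\Gamma_a)$ is concentrated in the single degree feeding $H^{d+1}_{\mathfrak m}(R)$, so $H^{j}_{\mathfrak m}(R)=0$ for $j\le d$ and $R$ is Cohen--Macaulay.

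I expect that lemma to be the only real obstacle, and it is exactly where normality enters: because $\sigma\cap N$ consists of \emph{all} lattice points of the cone with no gaps, membership of $a$ in the localization cones $\sigma-F$ is controlled by the linear facet inequalities alone, so $\Gamma_a$ is literally a union of facets of $\partial\sigma$; a short visibility/star-shapedness argument (the bad facets are those "facing $-a$") then forces $\Gamma_a$ to retract to a point whenever it is a proper nonempty subcomplex, while $\Gamma_a=\emptyset$ when $a\in\sigma$ and $\Gamma_a=\partial\sigma$ when $-a$ lies in the interior of the negative dual cone. For a non-saturated monoid this breaks down and Cohen--Macaulayness can genuinely fail. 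An equivalent route, which I would mention as an alternative, bypasses local cohomology altogether: the map $u\mapsto(\langle u_1,u\rangle,\dots,\langle u_r,u\rangle)$ embeds $\sigma\cap N$ into $\ZZ_{\ge 0}^{r}$ and identifies $R$ with the invariant ring of a diagonal subtorus of $(\CC^{*})^{r}$ acting on $\CC[y_1,\dots,y_r]$, whence $R$ is Cohen--Macaulay by the Hochster--Roberts theorem. Either way the result is entirely classical, and in the final write-up it suffices to carry out the star-shapedness step (or the invariant-theoretic reduction) in a few lines, or simply to cite a standard treatment such as \cite{BrGub} or \cite{Stks}.
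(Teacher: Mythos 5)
Your proposal is correct, and it is worth noting that the paper itself does not supply a proof of this proposition: the Appendix presents it as a known fact and points to \cite{BrGub, Eis, MS04, Stks, Vas04}, so there is no ``paper's own proof'' to compare against. Your sketch faithfully reproduces the standard argument. The reduction to showing $H^{j}_{\mathfrak m}(R)=0$ for $j\le d$ via the Ishida/\v{C}ech complex indexed by faces of the cone $\sigma=C(P)$, the observation that in each degree $a$ the complex collapses to (a shift of) the reduced simplicial cochain complex of the subcomplex of $\partial\sigma$ cut out by the facet inequalities violated by $a$, and the key combinatorial-topological lemma that this subcomplex is empty, contractible, or all of $\partial\sigma$ (precisely because normality makes membership in the localization cones $\sigma-F$ a matter of linear inequalities, so the subcomplex is genuinely a union of facets and is star-shaped when proper and nonempty) is exactly the argument in the standard treatments you cite. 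The alternative route you mention is equally classical: the embedding $u\mapsto(\langle u_1,u\rangle,\dots,\langle u_r,u\rangle)$ realizes $\CC[C(P)]$ as the ring of invariants of a diagonalizable group acting on a polynomial ring, hence as a direct summand of $\CC[y_1,\dots,y_r]$, and Cohen--Macaulayness follows from Hochster--Roberts. Both routes are valid over $\CC$; for the purposes of this paper, simply citing \cite{BrGub} or \cite{Stks}, as the authors do, would suffice.
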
 
Throughout the article we were interested in generators of the ideal $I$ such that $\CC[C(P)]=\CC[x_p:p\in P\cap M]/I=S/I$. These are usually very hard to understand even for specific instances. However, there is an algebraic invariant that bounds their degree, known as {\it Castelnuovo-Mumford regularity}, or simply, the {\it regularity}. 

\begin{definition}[{\bf Castelnuovo-Mumford regularity}]
For an $S$-module $M$ its regularity $\textnormal{reg}(M)$ is defined as $$\textnormal{reg}(M)=\max\{j-i:b_{ij}\neq 0\},$$
where
$$0\leftarrow M\leftarrow \bigoplus_j S(-j)^{b_{0j}}\leftarrow\ldots \leftarrow \bigoplus_j S(-j)^{b_{ij}}\leftarrow \ldots \leftarrow 0$$
is the minimal free resolution of $M$. 
\end{definition}
As $I$ is an $S$ module, its regularity in particular bounds the degree of generators; this is the case $i=0$ in the definition. It can be seen that $\textnormal{reg}(\CC[C(P)])$ is the maximal degree of standard monomials under rev-lex in generic coordinates. Hence $\textnormal{reg}(I)$ bounds the degree of such a Gr\"obner basis, as $\textnormal{reg}(S/I)+1=\textnormal{reg}(I)$. The following proposition relates both notions of regularity introduced above.
\begin{proposition}
$a(M)\leq \textnormal{reg}(M)-\textnormal{depth}(M)$ and equality holds if $M$ is Cohen-Macaulay. In particular, $\textnormal{reg}(\CC[C(P)])=\deg h$ and $I$ is generated in degree at most $1+\deg h$.
\end{proposition}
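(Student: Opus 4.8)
The plan is to phrase everything in terms of graded local cohomology. Write $S$ for the ambient polynomial ring with its standard grading, $\mathfrak m\subset S$ for the irrelevant ideal, and, for a finitely generated graded $S$-module $M$, put $a_i(M)=\max\{n:H^i_{\mathfrak m}(M)_n\neq 0\}$, with the convention $a_i(M)=-\infty$ when $H^i_{\mathfrak m}(M)=0$. I would invoke two standard facts: by graded local duality the resolution-theoretic regularity of the definition equals $\textnormal{reg}(M)=\max_i\{a_i(M)+i\}$, and $\textnormal{depth}(M)=\min\{i:H^i_{\mathfrak m}(M)\neq 0\}$ while $\dim M=\max\{i:H^i_{\mathfrak m}(M)\neq 0\}$.

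First I would prove the inequality $a(M)\le\textnormal{reg}(M)-\textnormal{depth}(M)$ for arbitrary $M$. By the Grothendieck--Serre formula, in degree $n$ the Hilbert function of $M$ minus its Hilbert polynomial equals $\sum_i(-1)^i\dim_{\CC}H^i_{\mathfrak m}(M)_n$; in particular this discrepancy vanishes as soon as $n>\max_i a_i(M)$, so $a(M)\le\max_i a_i(M)$. Setting $p=\textnormal{depth}(M)$, we have $a_i(M)=-\infty$ for $i<p$, while for each $i\ge p$, $\textnormal{reg}(M)\ge a_i(M)+i\ge a_i(M)+p$; hence $\max_i a_i(M)\le\textnormal{reg}(M)-p$, which is the claim.

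Next I would treat the Cohen--Macaulay case and deduce the ``in particular''. If $M$ is Cohen--Macaulay of dimension $d=\textnormal{depth}(M)$, then $H^i_{\mathfrak m}(M)=0$ for $i\neq d$, so $\textnormal{reg}(M)=a_d(M)+d$ and the Grothendieck--Serre formula shows that in degree $n$ the Hilbert function minus the Hilbert polynomial equals $(-1)^d\dim_{\CC}H^d_{\mathfrak m}(M)_n$; therefore $a(M)=a_d(M)=\textnormal{reg}(M)-d=\textnormal{reg}(M)-\textnormal{depth}(M)$. Applying this to $A=\CC[C(P)]$, which is Cohen--Macaulay by Hochster's theorem and satisfies $\textnormal{depth}(A)=\dim A=d+1$ with $d=\dim P$, and using the preceding proposition $a(A)=\deg h-(d+1)$, we get $\textnormal{reg}(A)=a(A)+(d+1)=\deg h$. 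Finally, from $0\to I\to S\to S/I\to 0$ and $\textnormal{reg}(S)=0$ one obtains $\textnormal{reg}(I)=\textnormal{reg}(S/I)+1=\deg h+1$, and since the degrees of a minimal generating set of $I$ are bounded by $\textnormal{reg}(I)$ (the case $i=0$ of the definition), $I$ is generated in degree at most $1+\deg h$.

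All of this is bookkeeping once the right language is fixed; the one ingredient I would not reprove is the equivalence between the resolution definition of $\textnormal{reg}(M)$ and the local-cohomology expression $\max_i\{a_i(M)+i\}$, i.e.\ graded local duality, which I would cite from a standard reference such as \cite{Eis} or \cite{BrGub}. That equivalence is the real crux: granting it, both the inequality and the Cohen--Macaulay equality collapse to the sign pattern in the Grothendieck--Serre formula together with the trivial fact $\textnormal{reg}(S)=0$.
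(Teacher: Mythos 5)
The paper itself gives no proof of this proposition: the Appendix presents it as a known result and points the reader to the references \cite{BrGub, Eis, MS04, Stks, Vas04}. Your proposal therefore cannot be "the same approach as the paper," but it is a complete and correct proof, and it follows the standard route through local cohomology that those references take.

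Your chain of reasoning checks out. The Grothendieck–Serre formula gives the discrepancy between Hilbert function and Hilbert polynomial as an alternating sum of $\dim_{\CC}H^i_{\mathfrak m}(M)_n$, so the discrepancy vanishes once $n$ exceeds every $a_i(M)$; combined with $\textnormal{reg}(M)=\max_i\{a_i(M)+i\}$ and vanishing of $H^i_{\mathfrak m}(M)$ below the depth, this yields $a(M)\le\max_i a_i(M)\le\textnormal{reg}(M)-\textnormal{depth}(M)$. In the Cohen--Macaulay case the alternating sum collapses to a single term $(-1)^d\dim_{\CC}H^d_{\mathfrak m}(M)_n$ whose sign is constant, so no cancellation occurs and the inequality sharpens to equality; and since $H^d_{\mathfrak m}(M)\neq 0$ for a $d$-dimensional Cohen--Macaulay module, $a_d(M)$ is a genuine integer rather than $-\infty$, so the equality is not vacuous. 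The deduction $\textnormal{reg}(\CC[C(P)])=\deg h$ then follows from the preceding proposition's identity $a(\CC[C(P)])=\deg h-(d+1)$ together with $\textnormal{depth}=\dim=d+1$, and $\textnormal{reg}(I)=\textnormal{reg}(S/I)+1$ (valid since $I$ is a nonzero proper homogeneous ideal, so the truncation of the resolution of $S/I$ gives the minimal resolution of $I$). You correctly isolate the one nontrivial external input, the local-cohomology characterization of Castelnuovo--Mumford regularity, and flag it as something to cite rather than reprove; that is exactly the right call here.
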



\begin{small}

\noindent
{\bf Acknowledgements.}\smallskip \\
Mateusz Micha{\l}ek was supported by Polish National Science Centre grant no. 2015/19/D/ST1/01180, the Foundation for Polish Science (FNP) and is a member of AGATES group. The authors acknowledge the kind hospitality of UC Berkeley and FU Berlin, where this research was in part conducted.

\end{small}

\begin{small}

\end{small}

\bigskip

\noindent
\footnotesize {\bf Authors' addresses:}

\smallskip

\noindent Mateusz Micha{\l}ek,
Max Planck Institute for Mathematics in the Sciences, Leipzig, Germany\\
{\tt mateusz.michalek@mis.mpg.de}\\
Institute of Mathematics of
Polish Academy of Sciences, Warsaw, Poland\\
{\tt mmichalek@impan.pl}

\smallskip

\noindent Emanuele Ventura,
Max Planck Institute for Mathematics in the Sciences, Leipzig, Germany\\
{\tt emanuele.ventura@mis.mpg.de}

\end{document}